\documentclass[11pt, twoside]{article}
\usepackage{amsthm}
\usepackage{amscd}
\usepackage{mathpazo}
\usepackage{txfonts}

\usepackage{amsmath}
\usepackage{mathrsfs}
\usepackage{enumerate}

\usepackage{mathtools}
\allowdisplaybreaks[1]
\usepackage{color}
\usepackage{hyperref}
\usepackage{lastpage}
\usepackage{fancyhdr}
\usepackage[T1]{fontenc}
\usepackage{geometry}
\usepackage{graphicx}
\usepackage[dvipsnames]{xcolor}
\allowdisplaybreaks[1]

\numberwithin{equation}{section}
\theoremstyle{plain}
\newtheorem{theorem}{Theorem}[section]        
\newtheorem{corollary}[theorem]{Corollary}             
\newtheorem{lemma}[theorem]{Lemma}

\theoremstyle{definition}
\newtheorem{definition}[theorem]{Definition}                
\theoremstyle{remark}
\newtheorem{remark}[theorem]{Remark}

\newcommand{\bR}{\mathbb{R}}

\newcommand{\W}{\mathbf{W}}
\newcommand{\st}{ \hspace{0.075cm} \hat{\otimes} \hspace{0.075cm}  }

\makeatletter
\@tfor\next:=abcdefghijklmnopqrstuvwxyzABCDEFGHIJKLMNOPQRSTUVWXYZ\do{%
\def\command@factory#1{%
\expandafter\def\csname b#1\endcsname{\mathbf{#1}}
\expandafter\def\csname cl#1\endcsname{\mathcal{#1}}
}
\expandafter\command@factory\next
}

\mathtoolsset{showonlyrefs}

\title{On the Navier-Stokes equation perturbed by rough transport noise}
\author{Martina Hofmanov\'a\thanks{Institute of Mathematics, Technical University of Berlin, Germany, Financial support by the DFG via Research Unit FOR 2402 is gratefully acknowledged.} \thanks{Fakult\"at f\"ur Mathematik, Universit\"at Bielefeld, Postfach 100131, D-33501 Bielefeld}, \;James-Michael Leahy\thanks{Department of Mathematics, University of Southern California, USA}, ];Torstein Nilssen\thanks{Department of Mathematics, University of Oslo, Norway. Funded by the Norwegian Research Council (Project 230448/F20);} \footnotemark[1]}
\date{}

\begin{document}
\maketitle

\begin{abstract}
We consider the Navier-Stokes system in two and three space dimensions perturbed by transport noise and subject to periodic boundary conditions. The  noise  arises from perturbing the advecting velocity field by space-time dependent noise that is  smooth in space and  rough in time. We study the system  within the framework of rough path theory and, in particular, the recently developed theory of unbounded rough drivers. We introduce an intrinsic notion of a weak solution of the Navier-Stokes system, establish suitable a priori estimates and prove existence. In two dimensions, we prove that the solution is unique and stable with respect to the driving noise. 

\bigskip

MSC Classification Numbers: 60H15, 76D05, 47J30, 60H05, 35A15.

Key words: Rough paths, Stochastic PDEs, Navier-Stokes equation, variational method.

\end{abstract}

\section{Introduction}
\label{s:intro}

The theory of  rough paths, introduced by Terry Lyons in his seminal work \cite{Ly98},   can be briefly described as an extension of the classical theory of controlled differential equations that is robust enough to allow for a pathwise (i.e., deterministic) treatment of stochastic differential equations (SDEs).  Since its introduction,  the theory of ordinary and partial differential equations driven by rough signals has progressed substantially. We refer the reader to the works of Friz et al. \cite{CaFr09,CaFrOb11}, Gubinelli et al. \cite{GuTi10,DeGuTi12, GuLeTi06}, Gubinelli--Imkeller--Perkowski \cite{GuImPe15}, Hairer \cite{Ha14} for a  sample of the   literature on the growing subject. In spite of these exciting developments, many PDE  methods have not yet found their rough path analogues. For instance, until recently, it was not known how to  construct  (weak) solutions to rough partial differential equations (RPDEs) using energy methods (or variational methods).

The first results on  energy methods for RPDEs were established in \cite{BaGu15, DeGuHoTi16, 2017arXiv170707470H}. In \cite{BaGu15}, the foundation of the theory of unbounded rough drivers was established and then used to derive  the well-posedness of a linear transport equation driven by a rough-path in the Sobolev scale. Expanding upon the scope of the theory, the authors of \cite{DeGuHoTi16} developed a rough version of Gronwall's lemma and proved the well-posedness of nonlinear scalar conservation laws with rough flux. In the framework of unbounded rough drivers, one can define an intrinsic notion of a weak solution of an RPDE that is equivalent to the usual definition if the driving path is smooth in time. Additionally, one can  obtain an energy estimate of the solution.  Prior to the development of the theory of unbounded rough drivers and  rough Gronwall lemma, these problems remained open. In particular, how to study the well-posedness of the Navier-Stokes system with rough transport noise was out of reach. Most recently, the theory of unbounded rough drivers has been applied to prove the existence, uniqueness and stability of two classes of equations: 1) linear parabolic PDEs with a bounded and measurable diffusion coefficient driven by rough paths \cite{2017arXiv170707470H} and 2) reflected rough differential equations \cite{DeGuHoTi16a}.

The aim of our efforts is to study  the Navier-Stokes system subject to rough transport noise.
We study the system of equations that govern the  evolution of the velocity field $u: \bR_+ \times \bT^d \rightarrow \bR^d$ and the pressure $p : \bR_+ \times \bT^d \rightarrow \bR$ of an incompressible viscous fluid on the $d$-dimensional torus $\bT^d$ perturbed by transport-type noise:
\begin{equation} \label{eq:classicalForm}
\begin{aligned}
\partial_t u +(u-\dot{a})\cdot \nabla u +\nabla p& =  \nu \Delta u,  \\
\nabla \cdot u & = 0,  \\
u(0) & = u_0 \in L^2(\bT^d;\bR^d),  
\end{aligned}
\end{equation}
where $\nu > 0$ is the viscosity coefficient and $\dot{a}$ is the (formal) derivative in time  of a function  $a=a_t(x):\bR_+\times \bT^d\rightarrow \bR^d$ that is divergence-free in space  and has finite $p$-variation in time  for some $p \in [2,3).$ For example, $\dot{a}$ may represent noise that is white in time and colored in space. Such noise  is a formal time derivative of an $L^2(\bT^d)$-valued Wiener process. However, one of the main advantages of the theory of rough paths is that  drivers that are not necessarily martingales or of finite-variation can be considered, which is in direct contrast to the classical semimartingale theory. Consequently, $\dot{a}$ may represent the time derivative of a more general spatially dependent Gaussian or Markov process, such as a fractional Brownian motion, $B^H := (B^{H,1}, \dots, B^{H,K})$ with Hurst parameter $H\in(\frac{1}{3},\frac{1}{2}]$, coupled with a family of vector fields $\sigma = (\sigma_1, \dots ,  \sigma_K):\bT^d\rightarrow \bR^{K\times d}$; that is, for $(t,x)\in \bR_+\times \bT^d$,
$$
a_t(x) =  \sum_{k=1}^{K} \sigma_k(x) B^{H,k}_t.
$$

Even in the case of the unperturbed Navier-Stokes system, it is unknown whether there exists global smooth solutions, and so we study the perturbed system integrated in time and tested against a smooth test function in space. In particular, it is necessary to make sense of the time integral
$\int_0^t (\dot{a}_s\cdot \nabla) u_s \, ds$
as a spatial distribution. Testing this integral against a  smooth function $\phi:\bT^d\rightarrow \bR^d$, we get
\begin{equation}\label{eq:time integral}
\int_0^t (\dot{a}_s\cdot \nabla) u_s \, ds (  \phi)  = - \int_0^t  u_s ( (\dot{a}_s\cdot \nabla  )\phi)\,ds,
\end{equation}
where we have used the divergence-free assumption  $\nabla\cdot \dot{a}=0$.
However, the time integral is not a priori well-defined since we expect the solution $u$ to inherit the same regularity in time  as  $a$ (i.e.,   $p$-variation). Indeed, L.C.\ Young's theorem in \cite{young1936inequality} says that a Riemann-Stieltjes integral $\int fdg$ exists  if there are $p$ and $q$ with $p^{-1}+q^{-1}>1$, such that $f$ is of $p$-variation and $g$ is of $q$-variation. Furthermore, a counterexample is given for  the case $p^{-1}+q^{-1}=1$, and hence  the theorem of Young cannot be used to define \eqref{eq:time integral}, unless  $a$ has $p$-variation in time for  $p \in [1,2)$.

The rough path theory  of Lyons \cite{Ly98} enables us to define the integral \eqref{eq:time integral}, provided that we possess additional information about the driving path, namely its iterated integral, and the integrand. The idea is to iterate the equation for $u$ into the noise integral \eqref{eq:time integral} enough times so that the remainder is regular enough in time to be negligible.  In the case of transport noise, this iteration  leads to an iteration of the spatial derivative. For simplicity, let us explain how this iteration works for the pure transport equation
\begin{equation}\label{eq:puretransport}
\partial_t u=  (\dot{a}\cdot \nabla) u .
\end{equation}
Integrating  \eqref{eq:puretransport} in time, testing against a smooth function $\phi:\bT^d\rightarrow \bR^d$, and then iterating the equation \eqref{eq:puretransport} into itself yields
\begin{align}
u_t (\phi)  & =  u_s(\phi) -  \int_s^t  u_r ( (\dot{a}_r\cdot \nabla) \phi )\,dr \notag \\
& =  u_s(\phi) - u_s \left(  \int_s^t (\dot{a}_{r}\cdot\nabla)\phi\,dr \right)   +\int_s^t \int_s^{r_1}  u_{r_2} \left( (\dot{a}_{r_2}\cdot \nabla)(\dot{a}_{r_1}\cdot \nabla) \phi \right) \,\, dr_2\, dr_1 \notag\\
& =  u_s(\phi) - u_s \left(  \int_s^t (\dot{a}_{r}\cdot\nabla)\phi\,dr \right)  +  u_s \left( \int_s^t \int_s^{r_1} (\dot{a}_{r_2}\cdot \nabla)(\dot{a}_{r_1}\cdot \nabla)\phi \,\,\, dr_2\,\, dr_1 \right)  \notag \\
&\quad\quad  - \int_s^t \int_s^{r_1} \int_s^{r_2}  u_{r_3} \left( (\dot{a}_{r_3}\cdot \nabla)(\dot{a}_{r_2}\cdot \nabla)(\dot{a}_{r_1}\cdot \nabla)\phi\right) \,dr_3\, dr_2\, dr_1 , \label{three deriv}
\end{align}
where we have used the divergence-free assumption  $\nabla\cdot \dot{a}=0$. If we define the operators
\begin{equation} \label{eq:URDIntroDef}
A_{st}^{1} \phi =   \int_s^t  (\dot{a}_{r}\cdot \nabla) dr\phi \quad \textnormal{ and } \quad  A_{st}^{2} \phi =  \int_s^t \int_s^{r_1} (\dot{a}_{r_2}\cdot \nabla)(\dot{a}_{r_1}\cdot \nabla) dr_2dr_1 \phi,
\end{equation}
and let $\delta u_{st}=u_t-u_s$, then solving the transport equation \eqref{eq:puretransport}   corresponds to finding a map $t \mapsto u_t$ such that  $u^{\natural}$ defined by
\begin{equation}\label{eq:iteratedpuretransport}
u^{\natural}_{st} (\phi):= \delta u_{st} (\phi) - u_s \left( [A_{st}^{1,*}+A_{st}^{2,*}] \phi \right) 
\end{equation}
is of order $o(|t-s|)$, and hence is negligible. That is, the expansion $[A_{st}^1+A_{st}^2] u_s$ tested against $\phi$,  provides a good local approximation of the time integral \eqref{eq:time integral}, which is  uniquely defined by the sewing lemma (see Lemma \ref{sewingLemma}). Notice that if $a$ is smooth in time and space, then
\eqref{eq:iteratedpuretransport} is an equivalent formulation of the transport equation \eqref{eq:puretransport}. 
Because the time singularities in \eqref{eq:URDIntroDef} are smoothed out by averaging over time, 
the equation \eqref{eq:iteratedpuretransport} does not contain any time derivatives, and hence the formulation  
is well-suited for irregular drivers. Under certain conditions, the pair  $\mathbf{A}=(A^1,A^2)$ defines an \textit{unbounded rough driver} as defined in \cite{BaGu15} and in Section \ref{sec:unboundedrough} below.

In order to show that the remainder $u^\natural$ is of order $o(|t-s|)$, we shall regard it as a distribution of third order with respect to the space variable; note that three derivatives are taken in \eqref{three deriv}. One of the key aspects of the theory of unbounded rough drivers is the process by which one obtains a priori  estimates of the remainder $u^\natural$  (see Section \ref{sec: a priori}). The technique involves  obtaining estimates of $\delta u^{\natural}_{s\theta t}:=u^{\natural}_{st}-u^{\natural}_{s\theta}-u^{\natural}_{\theta t}$, interpolating between time and space regularity of various terms, and applying the sewing lemma  (i.e., Lemma \ref{sewingLemma}). This is yet another example of the trade-off between time and space regularity pertinent to many PDE problems. Notice that if $a$ is $\alpha$-H\"older continuous (essentially equivalent to $\alpha^{-1}$-variation) with respect to the time variable and the solution $u$ has the same regularity in time, then the first two terms on the right-hand-side of \eqref{eq:iteratedpuretransport} are proportional to $|t-s|^\alpha$ and the  last term on the right-hand-side can be bounded by $|t-s|^{2\alpha}$. Thus, in the case  $\alpha\in (\frac{1}{3},\frac{1}{2}]$, there has to be a cancellation between the terms on the right-hand-side to guarantee that $u^\natural$ is  of order $o(|t-s|)$. On the other hand, the right-hand-side of \eqref{eq:iteratedpuretransport}  is a distribution of second order with respect to the space variable. Accordingly, the necessary improvement of time regularity can be obtained at the cost of loss of space regularity; that is, considering $u^\natural$ rather as a distribution of third order.

In this paper, we assume that  the noise term $a$ can be factorized as follows:
\begin{equation}\label{eq:aa3}
a_t(x) = \sigma_k(x) z^k_t=  \sum_{k=1}^{K} \sigma_k(x) z^k_t,
\end{equation}
where we adopt the convention of summation over repeated indices $k\in \{1,\ldots,K\}$ here and below. We also assume that for all $k\in \{1,\ldots,K\}$, the vector fields $\sigma_k:\bT^d\rightarrow \bR^d$ are bounded, divergence-free, and twice-differentiable with bounded first and second derivatives. The driving signal $z$ is assumed to be a $\bR^{K}$-valued path of finite $p$-variation for some $p \in [2,3)$ that can be lifted to a geometric rough path $\bZ=(Z,\mathbb{Z}).$ The first  component of $\bZ$ is the increment of $z$ (i.e., $Z_{st}=z_t-z_s$) and the second component is the so-called L\'evy's area, which plays the role of the iterated integral $\mathbb{Z}_{st}=:\int_{s}^t\int_{s}^r\, dz_{r_1}\otimes \, dz_r$. In the smooth setting, the iterated integral can be defined as a Riemann integral, whereas in the rough setting, it has to be given as an input datum;  the two-index map $\mathbb{Z}_{st}$ is  assumed to satisfy Chen's relation 
$$
\delta \mathbb{Z}_{s\theta t}:=\mathbb{Z}_{s t}-\mathbb{Z}_{s \theta}-\mathbb{Z}_{\theta t} = Z_{s\theta }\otimes Z_{\theta t }, \;\;s\le \theta \le t,
$$
and to be two-times as regular in time as the path $z$.
For instance, if $z$ is a Wiener process, then an iterated integral can be constructed using the Stratonovich stochastic integration. Nevertheless, many other important stochastic processes give rise to  (two-step) rough paths. For more details, we refer the reader to Section \ref{ss:rp} and the literature mentioned therein.

The motivation for a perturbation  of the form $-\dot{a}\cdot \nabla u$ comes from the modeling of a turbulent flow of a viscous fluid.  In the Lagrangian formulation, an incompressible fluids evolution is traditionally specified in terms of the flow map of particles initially at $X$:
$$\dot{\eta}_t(X)=u_t(\eta_t(X)),  \;\;\eta(X,0)=X\in \bT^d, \quad  \nabla \cdot u= 0.$$
If we assume the associated fluid flow map is a composition of a mean flow depending on slow time $t$ and a rapidly fluctuating flow with fast time scales $\epsilon^{-1}t$, $\epsilon \ll 1$, then provided that the fast-dynamics are sufficiently chaotic,  on  time-scales of order $\epsilon^{-2}$, the averaged slow-dynamics are described by the SDE \cite{cotter2017stochastic}
\begin{equation}
d\bar{\eta}_t(X)=\bar{u}_t(\bar{\eta}_t(X))dt-\sigma_k(\bar{\eta}(X,t))\circ dw^k_t, \;\;\bar{\eta}(X,0)=X\in \bT^d, \;\;  \nabla \cdot \bar{u}= 0, \;\; \nabla \cdot \sigma_k= 0, \label{eq:SFlow}
\end{equation}
where  $w:=\{w^k\}_{k=1}^{\infty}$ is a sequence of independent Brownian motions and the stochastic integral is understood in the Stratonovich sense. The flow dynamics given by \eqref{eq:SFlow} encompasses models of stochastic passive scalar turbulence that were originally proposed by R.\ Kraichnan \cite{kraichnan1968small} and  further developed in \cite{gawedzki1996university, gawedzki2000phase} and other works. In \cite{brzezniak1991stochastic, brzezniak1992stochastic, mikulevicius2004stochastic, mikulevicius2005global}, it was shown that the system of equations governing the resolved scale velocity  field $\bar{u}$ and pressure $p$ and $\{q_k\}_{k=1}^{\infty}$ is a  stochastic version of the Navier-Stokes system with transport noise:
\begin{equation}
d\bar{u}+(\bar{u}dt-\sigma_k\circ dw^k_t)\cdot \nabla \bar{u}+\nabla  p dt+\nabla q_k\circ dw^k_t=\nu \Delta \bar{u} dt\label{eq:SNS1}.
\end{equation}
The existence and uniqueness of solutions of \eqref{eq:SNS1} has been well-studied \cite{brzezniak1992stochastic, flandoli1995martingale,mikulevicius2004stochastic, mikulevicius2005global}. In \cite{mikulevicius2005global}, the authors proved the existence of global weak-probabilistic solutions (i.e., martingale solutions) of a general class of stochastic Navier-Stokes equations on the whole space, which included   \eqref{eq:SNS1}. Moreover, in dimension two, the uniqueness of the global strong probabilistic solution was established in \cite{mikulevicius2005global} as well. The existence of strong global solutions for the stochastic Navier-Stokes system \eqref{eq:SNS1} in three-dimensions  is still an open problem.

In this paper, we  develop a (rough) pathwise solution theory for \eqref{eq:classicalForm}, which, in particular, offers a pathwise interpretation of  \eqref{eq:SNS1} for $k\in \{1,\ldots,K\}$. We establish the existence of weak solutions in two and three space dimensions (see Theorem \ref{existenceThmNoGalerkin}) by establishing energy estimates, including the recovery of the pressure (see Section~\ref{s:pressure}). To prove existence, we use Galerkin approximation combined with a suitable mollification of the driving signal, uniform energy estimates of the solution and the remainder terms and a compactness argument. In addition, in two space dimensions and for constant vector fields $\sigma_k$, we prove uniqueness and pathwise stability with respect to the given driver and initial datum via a tensorization argument (see Theorem \ref{contractiveTheoremNoContraction} and Corollary \ref{cor:stability}). This result implies a Wong-Zakai approximation theorem  for the Wiener driven SPDE  \eqref{eq:SNS1}.
To the best of our knowledge, this is the first Wong-Zakai type result for the Navier-Stokes system \eqref{eq:SNS1}.  
There are a substantial number of Wong-Zakai results for infinite dimensional stochastic evolution equations in various settings. We mention only the work \cite{CM11} of A. Millet and I. Chueshov  in which the authors derive a Wong-Zakai result and support theorem for a general class of stochastic 2D hydrodynamical systems, including 2D stochastic Navier-Stokes. However, the diffusion coefficients in \cite{CM11} are assumed to have linear growth on $L^2(\bT^2; \bR^2)$, and hence do not cover transport noise. We do note, however, that in  \cite{CM10},  A.\ Millet and I.\ Chueshov establish a  large deviation result for stochastic 2D hydrodynamical systems that does hold true for transport noise. 

Our approach relies on a suitable formulation of the system \eqref{eq:classicalForm} that is similar to the formulation of the pure-transport equation \eqref{eq:iteratedpuretransport} discussed above. However, due to the  structure of \eqref{eq:classicalForm} and the fact that a solution is the pairing of a velocity field and pressure $(u,p)$, the formulation is more subtle. In fact, we present two equivalent (rough) formulations of \eqref{eq:classicalForm} in Section \ref{ss:form}.

Let $P$ be the Helmholtz-Leray projection and $Q=I-P$ (see Section 2.1 for more details). Applying $P$ and $Q$ separately to \eqref{eq:classicalForm},
we obtain the system of coupled equations	
\begin{align*}
\partial_t u +P[(u\cdot \nabla) u] &  =  \nu \Delta u+P[(\dot{a}\cdot \nabla) u]  \notag \\
Q[(u\cdot \nabla) u]+\nabla p & =Q[(\dot{a}\cdot \nabla) u]\label{Pressure eq}. \notag 
\end{align*}
We can then perform an iteration of the equation for $u$ in the time integral of $P[\dot{a}\cdot \nabla u]$ and $Q[\dot{a}\cdot \nabla u]$ like we illustrated above for the pure transport equation \eqref{eq:iteratedpuretransport}. After doing so, we obtain  a coupled system of equations for the velocity field and pressure for which the associated unbounded rough drivers are intertwined and a  version of the so-called Chen's relation holds true (see \eqref{quasiChen} and Definition \ref{def:solution2}). We derive a second equivalent formulation by summing the coupled equations from the first formulation. This second formulation is a single equation for the velocity field in which  a modified Chen's relation holds (see \eqref{quasiChen2} and Definition \ref{def:solution}). An alternative way to arrive at the second formulation is by iterating \eqref{eq:classicalForm} and using that $\nabla p=Q[(\dot{a}\cdot \nabla) u]-Q[(u\cdot \nabla) u]$. 

The presentation of this paper is organized as follows. In Section \ref{s:setting}, we define our notion of solution and state our main results. In Section \ref{sec: a priori}, we derive a priori estimates of remainder terms, which are used in   Section \ref{main} to prove our main results. Several auxiliary results that are used to prove the main results are presented in the appendix.

\section{Mathematical framework and main results}
\label{s:setting}

\subsection{Notation and definitions}
\label{ss:notation}

We begin by fixing the notation that we use throughout the paper. 

We shall write $a \lesssim b$ if   there exists a positive constant $C$ such that $a \le b$. If the contant $C$ depends only on the parameters $p_1,\ldots,p_n$, we shall also write $C = C(p_1, \ldots, p_n)$ and $\lesssim_{p_1,\ldots,p_n}$.

Let $\bN_0=\bN\cup \{0\}$. For  a given $d\in \mathbf{N}$, let  $\bT^d=\bR^d/(2\pi \bZ)^d$ be the $d$-dimensional flat torus and denote by $\, dx$ the unormalized Lebesgue measure on $\bT^d$. As usual, we blur the distinction between periodic functions and functions defined on the torus $\bT^d$. For a given Banach space $V$ with norm $|\cdot|_V$, we denote by  $\clB(V)$ the Borel sigma-algebra of $V$ and by $V^*$ the continuous dual of $V$.
For given Banach spaces $V_1$ and $V_2$, we denote by $\clL(V_1,V_2)$  the space of continuous linear operators from $V_1$ to $V_2$ with the operator norm denoted by $|\cdot|_{\clL(V_1,V_2)}$.

For a given sigma-finite measured space $(X,\clX,\mu)$, separable Banach space $V$ with norm $|\cdot|_V$, and $p\in [1,\infty]$, we denote by $L^p(X;V)$  the  Banach space of all $\mu$-equivalence-classes of strongly-measurable functions $f: X\rightarrow V$ such that 
$$
|f|_{L^p(X;V)}:=\left(\int_{X}|f|_V^p\,d\mu\right)^{\frac{1}{p}}<\infty,
$$
equipped with the norm 
$
|\cdot |_{L^p(X;V)}.
$
We denote by  $L^{\infty}(X;V)$  the Banach space of all $\mu$-equivalence-classes of strongly-measurable functions $f: X\rightarrow V$ such that 
$$
|f |_{L^{\infty}(X;V)}:=\operatorname{esssup}_X|f|_V:=\inf\{a\in \mathbf{R}: \mu(|f|_V^{-1}((a,\infty))=0)\}<\infty,
$$
where $|f|_V^{-1}((a,\infty))$ denotes the preimage of the set $(a,\infty)$ under the map $|f|_V : X\rightarrow \mathbf{R}$, equipped with the norm 
$
|\cdot |_{L^{\infty}(X;V)}.
$
It is well-known that if $V=H$ is a  Hilbert space with inner product $(\cdot,\cdot)_H$, then $L^2(X;H)$ is a Hilbert space equipped with the inner product
$$
(f,g)_{L^2(X;H)}=\int_{X}(f,g)_H\,d\mu, \quad f,g\in L^2(X;H).
$$
For a given Hilbert space $H$, we let
$
L^2_TH=L^2([0,T];H)$ and $   L^{\infty}_TH=L^\infty([0,T];H).
$
Moreover, let $\bL^2=L^2( \bT^d ; \bR^d)$.

For a given Hilbert space $V$, and real number $T>0$, we let $C_TH=C([0,T];H)$ denote the Banach space  of continuous functions from $[0,T]$ to $G$, endowed with the supremum norm in time.

For a given $n\in \bZ^d$, let $e_n: \bT^d\rightarrow \bC$ be defined by $e_n(x)=(2\pi)^{-\frac{d}{2}}e^{in\cdot x}$. It is well-known that $\{e_n\}_{n\in \bZ^d}$ constitutes an orthonormal system of $L^2(\bT^d;\bC)$, and hence for all $f,g\in \bL^2$,
$$
f=\sum_{n\in\bZ^d}\hat{f}_ne_n,\quad (f,g)_{\bL^2}=\sum_{n\in \bZ^d} \hat{f}_n\cdot \overline{\hat{g}_n},
$$ 
where for each $n\in \bZ^d$,
$$\hat{f}^i_n=\int_{\bT^d}f^i(x)e_{-n}(x)\,\, dx, \quad i\in \{1,\ldots,d\}.$$
Let $\clS$ be the Fr\'echet space of infinitely differentiable periodic complex-valued functions with the usual set of seminorms.
Let $\clS'$ be the continuous dual space of $\clS$ endowed with the weak-star topology. For a given $\Lambda \in \clS'$ and test function $\phi \in \clS$, we denote by $\Lambda(\phi)$ the value of a distribution  $\Lambda$ at $\phi \in \clS$. Since $e_n\in \clS$, for a given $f \in \clS'$ and $n\in \bZ^d$, we define $\hat{f}_n=f(e_n).$  It is well-known that
$
f=\sum_{n\in \mathbf{Z}^d}\hat{f}_ne_n,
$ 
where  convergence  holds in $\clS$ if $f\in \clS$ and in $\clS'$ if $f\in \clS'$. This extends trivially to the set $\bS'=(\clS')^d$ of continuous linear functions from $\bS=(\clS)^d$ to $\bC$ endowed with the weak-star topology.

For a given $\alpha\in \bR$, we denote by $\W^{\alpha,2}$ the Hilbert space
$$
\W^{\alpha,2}=(I-\Delta)^{-\frac{\alpha}{2}}\bL^2=\{f\in \bS': (I-\Delta)^{\frac{\alpha}{2}}f\in \bL^2\}
$$
with inner product
$$
(f,g)_{\alpha}=((I-\Delta)^{\frac{\alpha}{2}}f,(I-\Delta)^{\frac{\alpha}{2}}g)_{\bL^2}=\sum_{n\in \bZ^d}(1+|n|^2)^\alpha\hat{f}_n\overline{\cdot \hat{g}_n},\quad  \; f,g\in \W^{\alpha,2}
$$
and  induced norm $|\cdot |_{\alpha}$. For notational simplicity, when $m=0$ we omit the index in the inner product, i.e. $(\cdot, \cdot) := (\cdot, \cdot)_0$.  Moreover,  for any $u\in \bW^{1,2}$, we write
$
|\nabla u|_0^2=\sum_{i=1}^d|D_iu|_0^2.
$
It is easy to see that $\W^{\alpha,2}\subset \W^{\beta,2}$ for $\alpha,\beta\in \bR$ with  $\alpha>\beta$ and that $\bS$ is dense in $\W^{\alpha,2}$ for all $\alpha\in \bR$.  It can be shown that  for all $\alpha,\beta\in \bR$, the map $i_{\alpha-\beta,\alpha+\beta}: \bW^{\alpha-\beta,2}\rightarrow (\bW^{\alpha+\beta,2})^*$ defined by
$$
i_{\alpha-\beta,\alpha+\beta}(g)(f)= \langle g,f\rangle_{\alpha-\beta,\alpha+\beta}:=((I-\Delta)^{\frac{-\beta}{2}}g ,(I-\Delta)^{\frac{\beta}{2}}f)_{\alpha}, 
$$
for all $f\in \bW^{\alpha+\beta,2}$ and $ g\in \bW^{\alpha-\beta,2},$ is an isometric isomorphism. 

Let
$$
\bH^0=\left\{f\in \W^{0,2}: \;\nabla \cdot  f= 0\right\}=\left\{f\in \W^{0,2}: \hat{f}_n\cdot n=0, \;\forall n\in \bZ^d\right\}.
$$
We define $P :\bS'\rightarrow \bS'$  by
$$
Pf=\sum_{n\in\mathbf{Z}^d}\left(\hat{f}_n-\frac{n\cdot \hat{f}_n}{|n|^2}n\right)e_n, \quad f\in \bL^2,
$$
and let  $Q = I - P$. It follows that  $P$ is a projection of $\bL^2$ onto $\bH^0=P\bL^2$ and that $\bL^2$ possesses the orthogonal decomposition 
$$
\bL^2=P\bL^2\oplus Q\bL^2.
$$ 
Moreover, it is clear that  $P,Q\in \clL(\W^{\alpha,2},\W^{\alpha,2})$ and that $P$ and $Q$  have operator norm less than or equal to  one for all $\alpha\in \bR$.  We set $$\bH^{\alpha}=P\W^{\alpha,2} \;\;\&\;\; \bH_{\perp}^{\alpha}=Q\W^{\alpha,2}.$$  It can be shown that  for all $\alpha\in \bR$ (see Lemma 3.7 in \cite{mikulevicius2003cauchy}),
$$
\W^{\alpha,2} =\bH^\alpha\oplus \bH_{\perp}^\alpha,
$$
where \begin{equation}\label{direct sum prop}
\langle f,g\rangle_{-\alpha,\alpha}=0, \quad \forall g\in \bH_{\perp}^\alpha, \quad \forall f\in \bH^{-\alpha},\end{equation} and 
$$
\bH^\alpha=\left\{f\in \W^{\alpha,2}: \;\nabla \cdot  f= 0\right\},
$$
$$
\bH_{\perp}^\alpha=\{g\in \W^{\alpha,2}:  \langle f,g\rangle_{-\alpha,\alpha}=0, \;\; \forall f\in \bH^{-\alpha} \}.
$$
Using \eqref{direct sum prop}, one can check that  $i_{-\alpha,\alpha}:\bH^{-\alpha}\rightarrow (\bH^{\alpha})^*$ and $i_{-\alpha,\alpha}:\bH^{-\alpha}_{\perp}\rightarrow (\bH^{\alpha}_{\perp})^*$ are isometric isomorphisms  for all $\alpha\in \bR$. 

For each vector $n\in \mathbf{Z}^d $, $n \neq 0$, we can find  $(d-1)$-vectors $\{m_1(n),\cdots,m_{d-1}(n)\}\subseteq \bR^d$ that are  of unit length and orthogonal to both $-n$ and $n$ with respect to the standard inner product on $\bR^d$. It follows that $m_j(n)=m_j(-n)$ for all $j\in \{1,\ldots,d-1\}$ and $n\in \bZ^d$ such that $n \neq 0$, and that  $\mathbf{f}_{j,n}=m_j(n)e_n$, $n\in \bZ^d$, $n \neq 0$, $j\in \{1,\ldots,d-1\},$ is an orthonormal family of $\{u\in L^2(\bT^d;\bC^d): \nabla \cdot u=0\}$. 
In particular, for $n=[n_1, n_2]^T\in \mathbf{Z}^2$, $n \neq 0$ , the vector $\frac{n^{\perp}}{|n|}=[\frac{n_2}{|n|}, -\frac{n_1}{|n|}]^T$ is orthogonal to $n$ and $-n$ and has length one. Thus, if $d=2$, $\{\mathbf{f}_n=\frac{n^{\perp}}{|n|}e_n$, $n\in \bZ^2$, $n \neq 0\}$ is an orthonormal family $L^2(\bT^d;\bC)$. Notice that in this case
\begin{equation}\label{basis prop}
\nabla^T \mathbf{f}_n= \frac{n^{\perp}}{|n|}  in^Te_n= -\frac{n^{\perp}}{|n|}  (-in^T)e_n=-\nabla^T \mathbf{e}_{-n}=-\nabla^T \overline{\mathbf{f}_{n}}.
\end{equation}
One can then check that $\bw_{j,n}^{sin}(x):=\frac{\sqrt{2}}{(2\pi)^{\frac{d}{2}}}m_j(n)\sin(n\cdot x)$ and $\bw_{j,n}^{cos}(x):=\frac{\sqrt{2}}{(2\pi)^{\frac{d}{2}}}m_j(n)\cos(n\cdot x)$, for
$j\in \{1,\ldots,d-1\}$  and $n\in \bZ^d$ such that $n_1\ge 0$,
form an orthonormal basis of $\bH^0$ and an orthogonal basis of $\bH^1$.
Moreover, they are the eigenfunctions of the Stokes operator $P\Delta$ on $\bH^0$ with corresponding eigenvalues  $\lambda_{j,n}=|n|^2$. We re-index the sequence of eigenfunctions and eigenvalues by $\{h_n\}_{n=1}^{\infty}$ and $\{\lambda_n\}_{n=1}^{\infty}$, respectively, where $\{\lambda_n\}_{n=1}^{\infty}$ is a strictly positive increasing sequence tending to infinity. 

The following considerations shall enlighten the construction of the  unbounded rough drivers associated with  \eqref{eq:classicalForm} (see Section \ref{ss:form}). Let $\sigma:\bT^d\rightarrow \bR^d$ be  twice differentiable and divergence-free. Moreover, assume that the derivatives of $\sigma$ up to order two are bounded uniformly by a constant $N_0$.
Let  $\clA^1=\sigma\cdot \nabla=\sum_{i=1}^d\sigma^iD_i$ 
and  $\clA^2=(\sigma\cdot \nabla)(\sigma\cdot \nabla).$
It follows  that there is a  constant  $N=N(d,N_0,\alpha)$  such that 
$$
|\clA^1|_{\clL(\W^{\alpha+1,2},\W^{\alpha,2})}\le N, \;\forall \alpha\in [0,2],\quad
|\clA^2 f|_{\clL(\W^{\alpha+2,2},\W^{\alpha,2})}\le N, \;\forall \alpha\in [0,1].
$$
We refer the reader to \cite{mikulevicius2001note} for the estimates in the fractional norms; the estimates given in \cite{mikulevicius2001note} are on the whole space, but can easily be adapted to the periodic setting.
Since  $P\in \clL(\W^{\alpha,2},\bH^\alpha)$ and $Q\in \clL(\W^{\alpha,2}, \bH^\alpha_{\perp})$ for all $\alpha\in \bR$,  both of which have operator norm bounded by $1$, we have 
\begin{equation} \label{A1bound}
|P \clA^1|_{\clL(\bH^{\alpha+1},\bH^\alpha)}\le N, \quad |Q \clA^1|_{\clL(\bH_{\perp}^{\alpha+1},\bH_{\perp}^{\alpha})}\le N,\;\forall \alpha\in [0,2],
\end{equation}
and
\begin{equation} \label{A2bound}
|P \clA^2|_{\clL(\bH^{\alpha+2},\bH^\alpha)}\le N, \quad |Q \clA^2|_{\clL(\bH_{\perp}^{\alpha+2},\bH_{\perp}^{\alpha})}\le N,\;\forall \alpha\in [0,1],
\end{equation}
and hence $(P \clA^1)^*\in \clL((\bH^{\alpha})^*, (\bH^{\alpha+1})^*)$ and $(Q \clA^1)^*\in \clL((\bH_{\perp}^{\alpha})^*,(\bH^{\alpha+1}_{\perp})^*)$ for $\alpha\in [0,2]$ and  $(P \clA^2)^*\in \clL((\bH^{\alpha})^*, (\bH^{\alpha+2})^*)$ and $(Q \clA^2)^*\in \clL((\bH_{\perp}^{\alpha})^*,(\bH^{\alpha+2}_{\perp})^*)$ for $\alpha\in [0,1]$.
Making use of the divergence-free property of $\sigma_k$, $k\in \{1,\ldots,K\}$, we find 
$$
((-P\clA^1) f, g)=(f,P\clA^1 g),\quad \forall f,g\in \bS \cap \bH^0,
$$
and
$$
((-Q\clA^1) f, g)=(f,Q\clA^1 g),\quad \forall f,g\in \bS \cap \bH^0_{\perp},
$$
which implies that $(-P\clA^1 )^*=P\clA^1$ and $(-Q\clA^1)^*=Q\clA^1$.
Thus, owing to the characterization of the duality between $\W^{\alpha ,2}$ and $\W^{-\alpha,2}$ through the $\bL^2$ inner product, we have    $$P \clA^1\in \clL(\bH^{-\alpha}, \bH^{-(\alpha+1)}), \;\;Q \clA^1\in \clL(\bH_{\perp}^{-\alpha}, \bH^{-(\alpha+1)}_{\perp}),
$$
$$P \clA^2\in \clL(\bH^{-\alpha}, \bH^{-(\alpha+2)}), \; \;Q \clA^2\in \clL(\bH_{\perp}^{-\alpha}, \bH^{-(\alpha+2)}_{\perp}).$$

\medskip

In order to analyze the convective term, we employ the classical notation and bounds.
Owing to Lemma 2.1 in \cite{RT83} adapted to fractional norms (see \cite{hebey2000nonlinear}), the trilinear form
$$
b(u,\varv,w)= \int_{\bT^d} ((u\cdot \nabla)\varv)\cdot w \, \, dx=\sum_{i,j=1}^d\int_{\bT^d} u^i D_i\varv^j w^j \,\, dx
$$
is  continuous on $\bW^{\alpha_1,2}\times \bW^{\alpha_2 + 1,2}\times \bW^{\alpha_3,2}$ if $\alpha_1,\alpha_2,\alpha_3\in \bR_+$  
satisfy
\begin{gather*}
\alpha_1 + \alpha_2 + \alpha_3 \geq \frac{d}{2}, \;\; \textnormal{ if }\;\; \alpha_i \neq \frac{d}{2} \;\textnormal{ for all } i\in \{1,2,3\},\\\alpha_1 + \alpha_2 + \alpha_3 > \frac{d}{2}, \;\; \textnormal{ if } \alpha_i = \frac{d}{2}\; \textnormal{ for some }  i\in \{1,2,3\};
\end{gather*}
that is,
\begin{equation}\label{trilinear form estimate}
b(u,v,w)\lesssim_{\alpha_1,\alpha_2,\alpha_3,d}
|u|_{\alpha_1}|v|_{\alpha_2+1}|w|_{\alpha_3}.
\end{equation}
In the case $d=2$, by virtue of the  Gagliardo-Nirenberg interpolation inequality $|\phi|_{L^4(\bT^2,\bR^2)}\lesssim|\phi|_0^{\frac{1}{2}} | \phi|_{1}^{\frac{1}{2}}$, we have
\begin{equation}\label{ineq:Lady est}
b(u,\varv,w)\lesssim |u|_0^{\frac{1}{2}}| u|_1^{\frac{1}{2}}|  v|_1|w|_0^{\frac{1}{2}}| w|_1^{\frac{1}{2}}, \quad \forall u,v,w\in \bW^{1,2},
\end{equation}
which plays an important role in the uniqueness proof (see  Theorem \ref{contractiveTheorem}).
Moreover, for all $u\in  \bH^{\alpha_1}$ and  $(\varv,w)\in \bW^{\alpha_2+1,2}\times \bW^{\alpha_3,2}$ such that $\alpha_1,\alpha_2,\alpha_3$ satisfy \eqref{trilinear form estimate}, we have
\begin{equation}\label{eq:B prop}
b(u,\varv,w)=-b(u,w,\varv) \quad \textnormal{and} \quad b(u,\varv,\varv)=0.
\end{equation}
For $\alpha_1,\alpha_2,$ and $\alpha_3$ that satisfy \eqref{trilinear form estimate} and any given $(u,\varv)\in \bW^{\alpha_1,2}\times \bW^{\alpha_2+1,2}$, we define $B(u,\varv)\in \bW^{-\alpha_3,2}$ by
$$
\langle B(u,\varv),w\rangle_{-\alpha_3,\alpha_3}=b(u,\varv,w), \quad \forall w\in \bW^{\alpha_3,2}.
$$
Similarly, we define $B_P=PB$ and $B_Q=QB$ and note that $$B_P:=PB: \bW^{\alpha_1,2}\times \bW^{\alpha_2+1,2}\rightarrow \bH^{-\alpha_3},\quad B_Q:=QB:\bW^{\alpha_1,2}\times \bW^{\alpha_2+1,2}\rightarrow \bH^{-\alpha_3}_{\perp},$$
for $\alpha_1,\alpha_2,$ and $\alpha_3$ that satisfy \eqref{trilinear form estimate}.
We set $$B(u)=B(u,u), \;\; B_P(u) := B_P(u,u), \; \textnormal{and}\; B_Q(u):=B_Q(u,u).$$
\subsection{Smoothing operators}

As in \cite{BaGu15}, we will need a family of smoothing operators $(J^{\eta})_{\eta \in (0,1]}$ acting on the scale of spaces $(\W^{\alpha,2})_{\alpha \in \mathbf{R}}$; that is, we require a family $(J^{\eta})_{\eta \in (0,1]}$ such that for all $\alpha \in  \mathbf{R}$ and $\beta \in \bR_+$,
\begin{equation} \label{smoothingOperator}
|(I - J^{\eta}) f |_{\alpha} \lesssim \eta^{\beta} |f|_{\alpha+ \beta} \hspace{.5cm} \textnormal{ and} \hspace{.5cm} | J^{\eta} f |_{\alpha+\beta} \lesssim \eta^{-\beta} |f|_{\alpha}.
\end{equation}
We construct these operators from the  frequency cut-off operator  $S_N : \bS' \rightarrow\bS$ defined by
$$
S_N f= \sum_{ |n | < N } \hat{f}_n e_n .
$$
It follows that for all $\alpha\in \bR$ and $\beta \in \bR_+$,
$$
|f - S_N f|_{\alpha  }^2 = \sum_{ |n| \geq  N} (1 + |n|^2)^{\alpha} |\hat{f}_n|^2 \leq N^{- 2 \beta} \sum_{ |n| \geq  N} (1 + |n|^2)^{\alpha + \beta } |\hat{f}_n|^2 \leq N^{- 2 \beta} |f|_{\alpha+\beta}^2
$$
and 
$$
|S_N f|_{\alpha + \beta }^2 = \sum_{|n| < N }   (1 + |n|^2)^{\alpha +\beta } |\hat{f}_n|^2 \leq (1+  N^2)^{\beta} \sum_{ |n| \geq  N} (1 + |n|^2)^{\alpha } |\hat{f}_n|^2 \lesssim N^{2 \beta} |f|_{\alpha}^2 .
$$
We define $J^{\eta} := S_{ \lfloor \eta^{-1} \rfloor}$. It is then clear that $J^{\eta}$ is a smoothing operator on $\W^{\alpha,2}$ and that  it leaves the  subspaces $\bH^{\alpha}$ and $\bH_{\perp}^{\alpha}$ invariant.

\subsection{Rough paths} \label{ss:rp}

For a given interval $I,$ we define $\Delta_I := \{(s,t)\in I^2: s\le t\}$ and $\Delta^{(2)}_I := \{(s,\theta,t)\in I^3: s\le\theta\le t\}$.  For a given $T>0$, we let $\Delta_T := \Delta_{[0,T]}$ and $\Delta^{(2)}_T=\Delta^{(2)}_{[0,T]}$ Let $\mathcal{P}(I)$ denote the set of all partitions of an interval $I$ and let $E$ be a Banach space with norm $| \cdot |_E$. 
A function $g: \Delta_I \rightarrow E$ is said to have finite $p$-variation for some $p>0$ on $I$ if 
$$
|g|_{p-\textnormal{var};I;E}:=\sup_{(t_i)\in \clP(I)}\left(\sum_{i}|g _{t_i t_{i+1}}|^p_E\right)^{\frac{1}{p}}<\infty, 
$$
and we denote by $C_2^{p-\textnormal{var}}(I; E)$ the set of all continuous functions with finite $p$-variation on $I$ equipped with the seminorm $|\cdot |_{p-\textnormal{var}; I;E}$. In this section  we drop the dependence of norms  on the space $E$ when convenient. We denote by $C^{p-\textnormal{var}}(I; E)$ the set of all paths $z : I \rightarrow E$ such that $\delta z \in C_2^{p-\textnormal{var}}(I; E)$, where  $\delta z_{st} := z_t - z_s$.

For a given interval $I$, a two-index map $\omega: \Delta_I \rightarrow [0,\infty)$ is called superadditive if for all $(s,\theta,t)\in \Delta^{(2)}_I$,
$$
\omega(s,\theta)+\omega(\theta ,t)\le \omega(s,t).
$$
A two-index map $\omega: \Delta_I \rightarrow [0,\infty)$  is called a control if it is superadditive, continuous on $\Delta_I$ and for all $s\in I$, $\omega(s,s)=0$. 

If for a given $p > 0$, $g \in C^{p-\textnormal{var}}_2(I; E)$, then  it can be shown that the 2-index map $\omega_g: \Delta_I \rightarrow [0,\infty)$ defined by
$$
\omega_g(s,t)= |g|_{p-\textnormal{var};[s,t]}^p 
$$
is a control  (see, e.g.,  Proposition 5.8 in \cite{FrVi10}). It is clear that  $ |g_{st}| \leq \omega_g(s,t)^{\frac{1}{p}}$ for all $(s,t)\in \Delta_I$. If $\omega$ is a control such that $|g_{st}| \leq \omega(s,t)^{\frac{1}{p}}$, then using superadditivity of the control, we have 
$$
\sum_{i}|g _{t_i t_{i+1}}|^p \leq \sum_{i} \omega(t_i, t_{i+1}) \leq \omega(s,t),
$$
for any partition $(t_i)\in  \clP([s,t])$.
Taking supremum over all partitions yields $\omega_g(s,t) \leq \omega(s,t)$. Thus, we could equivalently define a semi-norm  on $C_2^{p-\textnormal{var}}(I;E)$ by 
\begin{equation}\label{equivdefpvar}
|g |_{p-var; [s,t]} = \inf \{ \omega(s,t)^{\frac{1}{p}} : |g_{uv}| \leq \omega(u,v)^{\frac{1}{p}} \textnormal{ for all } (u, v)  \in \Delta_{[s,t]} \} . 
\end{equation}

We shall need a local version of the $p$-variation spaces, for which we restrict the mesh size of the partition  by a control.

\begin{definition}\label{def:variationSpace}
Given an interval $I=[a,b]$, a control $\varpi$ and real number $L> 0$, we denote by $C^{p-\textnormal{var}}_{2, \varpi, L}(I; E)$  the space of continuous two-index maps $g : \Delta_I \rightarrow E$ for which  there exists at least one control $\omega$ such that for every $(s,t)\in \Delta_I$ with  $\varpi(s,t) \leq L$, it holds that
$
|g_{st}|_E \leq \omega(s,t)^{\frac{1}{p}}.
$ We define a semi-norm on this space by
$$
|g |_{p-\textnormal{var}, \varpi,L; I} =\inf \left\{\omega(a,b)^{\frac{1}{p}} :  \omega  \textnormal{ is a control s.t. } |g_{st}| \leq \omega(s,t)^{\frac{1}{p}}, \;\forall (s, t)  \in \Delta_{I} \textnormal{  with } \varpi(s,t) \leq L \right\} . 
$$ 
\end{definition}
\begin{remark}By the above analysis, it is clear that we could equivalently define the semi-norm as
$$
|g |_{p-\textnormal{var}, \varpi,L; I} = \sup_{(t_i)\in \clP_{\varpi,L}(I)}\left(\sum_{i}|g _{t_i t_{i+1}}|^p\right)^{\frac{1}{p}},
$$
where $\mathcal{P}_{\varpi, L }(I)$ denotes the family of all partitions of an interval $I$ such that $\varpi(t_i,t_{i+1}) \leq L$ for all neighboring partition points $t_i$ and $t_{i+1}$. 
It is clear that
\begin{equation} \label{inclusionOfLocalVariation}
C^{p-\textnormal{var}}_{2, \varpi_1, L_1}(I; E) \subset C^{p-\textnormal{var}}_{2, \varpi_2, L_2}(I; E)
\end{equation}
for $\varpi_1 \leq \varpi_2$ and $L_2 \leq L_1$.
\end{remark}
\begin{remark}\label{rem:oneindexpathslocal}
Let $I$ be an interval. We could define the local $p$-variation space for 1-index maps $C^{p-\textnormal{var}}_{\varpi, L}(I; E)$ as above.  However, there is no difference between the local and global spaces; that is, 
$
C^{p-\textnormal{var}}_{\varpi, L}(I; E)= C^{p-\textnormal{var}}(I; E).
$
Indeed, clearly $C^{p-\textnormal{var}}(I; E) \subset C^{p-\textnormal{var}}_{\varpi, L}(I; E)$. To show $C^{p-\textnormal{var}}_{\varpi, L}(I; E) \subset C^{p-\textnormal{var}}(I; E)$,  let $\varpi$ be such there is a partition $(s_j)_{j=1}^J$ of $I$ satisfying $\varpi(s_j, s_{j+1}) \leq L$. Then,  for any partition $(t_i)\in  \clP(I)$, we can always find a refinement $(\tilde{t}_k)$ of $(t_i)$ containing $(s_j)$. It follows from the superadditivity of $\varpi$ that $\varpi(\tilde{t}_k,\tilde{t}_{k+1})\le L$. Moreover, either  an interval $(t_i,t_{i+1})$ does not contain any of the $(s_j)_{j=1}^J$ or it contains  a set $\{s_{j_1(i)}, \ldots,s_{j_{n(i)}(i)} \}$. In the latter case, we have
$$
\delta g_{t_it_{i+1}}=\delta g_{t_is_{j_1(i)}}+\sum_{j=j_1(i)}^{j_{n(i)}(i)-1}   \delta g_{s_j s_{j+1}}+\delta g_{s_{j_{n(i)}(i)}t_{i+1}}.
$$
Thus, for any $g \in C^{p-\textnormal{var}}_{\varpi, L}(I; E)$, we have
$$
\sum_{(t_i)\in \clP(I)} |\delta g_{t_i t_{i+1}}|^p \lesssim_p \sum_{(\tilde{t}_i)\in \clP_{\varpi,L}} |\delta g_{\tilde{t}_i \tilde{t}_{i+1}}|^p\lesssim_p |g |_{p-\textnormal{var}, \varpi,L; I},
$$
and hence $C^{p-\textnormal{var}}_{\varpi, L}(I; E)= C^{p-\textnormal{var}}(I; E)$.
\end{remark}

\medskip

We now introduce the notion of a rough path. For a thorough introduction to the theory of rough paths, we refer the reader to the monographs \cite{MR2314753, FrVi10, FrHa14}. For a two-index map $g: \Delta_{I}\rightarrow \bR$, we define the second order increment operator $$\delta g_{s \theta t} = g_{st} - g_{\theta t} - g_{s \theta}, \quad \forall (s,\theta,t)\in\Delta^{(2)}_I.$$

\begin{definition}\label{defi-rough-path}
Let $K \in\bN $ and $p\in[2,3)$. A continuous $p$-rough path is a  pair 
\begin{equation}\label{p-var-rp}
\bZ=(Z, \mathbb{Z}) \in C^{p-\textnormal{var}}_2 ([0,T];\bR^{K}) \times C^{\frac{p}{2}-\textnormal{var}}_2 ([0,T]; \bR^{K\times K}) 
\end{equation}
that satisfies the Chen's relation 
\begin{equation*}\label{chen-rela}
\delta \mathbb{Z}_{s\theta t}=Z_{s\theta} \otimes Z_{\theta t},  \quad \forall(s, \theta, t) \in \Delta^{(2)}_{[0,T]}.
\end{equation*}
A rough path $\mathbf{Z}=(Z, \mathbb{Z})$ is said to be geometric if it can be obtained as the limit in the   product topology $C^{p-\textnormal{var}}_2 ([0,T];\bR^{K}) \times C^{\frac{p}{2}-\textnormal{var}}_2 ([0,T]; \bR^{K\times K})$  of a sequence of rough paths  $\{(Z^{n},\mathbb{Z}^{n})\}_{n=1}^{\infty}$ such that for each $n\in \bN$, 
$$Z^{n}_{st}:= \delta z^{n}_{st} \quad \textnormal{ and } \quad \mathbb{Z}^{n}_{st}:=\int_s^t \delta z^{n}_{s\theta} \otimes \mathrm{d} z^{n}_\theta ,$$
for some smooth path $z^n:[0,T] \to \bR^K$, where the  iterated integral is a Riemann integral. 
We denote by $\mathcal{C}^{p-\textnormal{var}}_g([0,T];\bR^K)$ the set of geometric $p$-rough paths and endow it with the product topology.
\end{definition}
\begin{remark}
For any continuous $p$-rough path $\bZ=(Z,\mathbb{Z})$, it is clear that we  can always find a control $\omega$ such that for all $(s,t)\in \Delta_T$,
$$
|Z_{st}|^p\le \omega(s,t)\quad \textnormal{and} \quad |\mathbb{Z}_{st}|^{\frac{p}{2}}\le\omega(s,t).
$$
With abuse of notation, we write $\omega=\omega_Z$.
This should compared with \eqref{ineq:UBRcontrolestimates} below.
\end{remark}

Throughout this paper, we will only consider  geometric rough paths. An advantage of working with geometric rough paths is that a first-order chain rule  similar to the one known for smooth paths holds true. We recall that such a chain rule is not
true in It\^o integration theory, in which only a (second order) It\^o formula is available. However, for the Stratonovich  integral, a first order chain rule holds true. Thus, in case of
a Brownian motion, a Stratonovich integral should be used for the construction of the iterated integral if one wishes to lift it to a geometric rough path.

\subsection{Unbounded rough drivers}\label{sec:unboundedrough}

Since the rough perturbation  in \eqref{eq:classicalForm} is  (unbounded) operator valued, it is necessary to generalize the notion of a rough path accordingly. To this end, we define  unbounded rough drivers, which can be regarded as operator valued rough paths with values in a suitable space of unbounded operators.
In what follows, we call a scale any family $(E^{\alpha}, | \cdot |_{\alpha})_{ \alpha \in \bR_+}$ of Banach spaces such that $E^{\alpha+ \beta}$ is continuously embedded into $E^{\alpha}$ for $\beta \in \bR_+$. For $\alpha \in\bR_+,$ we denote by $E^{-\alpha}$ the topological dual of $E^{\alpha}$, and note that, in general, $E^{-0}\neq E^0.$

\begin{definition}
\label{def:urd}
Let $p\in [2,3)$ and $T>0$ be given. A continuous unbounded $p$-rough driver with respect to the scale $(E^{\alpha}, |\cdot |_{\alpha})_{\alpha \in\bR_+}$, is a pair $\mathbf{A} = (A^1,A^2)$ of $2$-index maps such that
 there exists a continuous control $\omega_A$ on $[0,T]$ such that for every $(s,t)\in \Delta_T$,
\begin{equation}\label{ineq:UBRcontrolestimates}
| A^1_{st}|_{\mathcal{L}(E^{-\alpha},E^{-(\alpha+1)})}^p \leq\omega_{A}(s,t) \ \  \text{for}\ \ \alpha \in [0,2], \quad
|A^2_{st}|_{\mathcal{L}(E^{-\alpha},E^{-(\alpha+2)})}^{\frac{p}{2}} \leq\omega_{A}(s,t) \ \ \text{for}\ \ \alpha \in [0,1],
\end{equation}
and   Chen's relation holds true,
\begin{equation}\label{eq:chen-relation}
\delta A^1_{s\theta t}=0,\qquad\delta A^2_{s\theta t}= A^1_{\theta t}A^1_{s\theta},\;\;\forall (s,\theta,t)\in\Delta^{(2)}_T.
\end{equation}
\end{definition}
We will show below that Definition~\ref{def:urd} allows for a formulation of \eqref{eq:classicalForm}, \eqref{eq:aa3} (see Definition \ref{def:solution} and \ref{def:solution2}).

\subsection{Formulation of the equation}
\label{ss:form}

In this section, we derive a rough path formulation of \eqref{eq:classicalForm}, \eqref{eq:aa3}, which will be satisfied by solutions constructed by our main result below, Theorem \ref{existenceThmNoGalerkin}. The main ideas of this step were already discussed in Section \ref{s:intro} in the simpler setting of the transport equation \eqref{eq:puretransport}.

We fix an arbitrary terminal time $T>0$ and viscosity $\nu>0$. Let $d\in \{2,3\}$.  Let $z\in C^{p-\text{var}}([0,T];\bR^K)$ be such that it can be lifted to a  continuous geometric  $p$-rough path $\bZ=(Z,\mathbb{Z})\in \mathcal{C}^{p-\textnormal{var}}_g([0,T];\bR^K)$ for some $p\in [2,3)$. For each $k\in \{1,\ldots,K\}$, assume that  $\sigma_k : \bT^d \rightarrow \bR^d$ is  twice differentiable and divergence-free. Moreover, assume that for all $k\in \{1,\ldots,K\}$,  $\sigma_k$ and its derivatives up to order two are bounded uniformly.
For  given initial condition $u_0\in \bH^0$, we consider the  system of Navier-Stokes equations on $(t,x) \in [0,T] \times \bT^d$ given by
\begin{equation}
\begin{aligned} \label{NSDiffForm}
\partial_t u + (u \cdot \nabla) u  +\nabla p    & =   \nu \Delta u+(\sigma_k\cdot \nabla)u\,\dot{z}^k_t ,  \\
\nabla\cdot u & = 0,   \\
u(0) & = u_0,
\end{aligned}
\end{equation}
where the unknown are the velocity field $u: [0,T] \times \bT^d \rightarrow \bR^d$ and  pressure $p: [0,T] \times \bT^d \rightarrow \bR$.
Here and below, we use the notation
$$
(u \cdot \nabla) u= \sum_{j=1}^d u^j\frac{ \partial u}{ \partial x_j}\quad  \textnormal{ and }\quad
(\sigma_k\cdot \nabla)u\,\dot{z}^k_t=\sum_{k=1}^K(\sigma_k\cdot \nabla)u\,\dot{z}^k_t=\sum_{k=1}^K\sum_{j=1}^d \sigma^j_k\frac{ \partial u}{ \partial x_j}.
$$

The classical way of studying the Navier-Stokes equation in the variational framework is to decouple the velocity field and the pressure into two equations using the Leray projection $P$ defined in Section \ref{ss:notation}. Applying the solenoidal $ P :\W^{\alpha,2} \rightarrow \bH^\alpha$ and gradient projection $Q: \W^{\alpha,2} \rightarrow \bH^{\alpha}_{\perp}$ separately to \eqref{NSDiffForm} yields
\begin{equation} \label{NSDiffFormSystem}
\begin{aligned}
\partial_t u + P[ (u\cdot \nabla) u]&= \nu \Delta u+P[ (\sigma_k\cdot \nabla) u] \dot{z}_t^k,  \\
\nabla p+Q[ (u \cdot \nabla)  u] &=Q [(\sigma_k \cdot \nabla)  u] \dot{z}_t^k.
\end{aligned}
\end{equation}
We let $$\pi: = \int_0^{\cdot} \nabla p_r\,dr.$$ As we did for the pure transport equation  \eqref{eq:puretransport} in the introduction, we integrate the  \eqref{NSDiffFormSystem} over $[s,t]$ and then iterate the equation into itself to  obtain
\begin{equation} \label{NSRoughFormSystem}
\begin{aligned}
\delta u_{st} +\int_s^t P[ (u_r \cdot \nabla)  u_r]\,dr&=\int_s^t \nu\Delta u_rdr+ [A_{st}^{P,1}+A_{st}^{P,2}]u_s +u_{st}^{P, \natural} , \\
\delta \pi_{st}+\int_s^t Q[ (u_r \cdot \nabla ) u_r)]\,dr&= [A_{st}^{Q,1}+A_{st}^{Q,2}] u_s + u_{st}^{Q, \natural}  ,
\end{aligned}
\end{equation}
where 
\begin{gather*}
A^{P,1}_{st}\varphi :=   P  [(\sigma_k \cdot \nabla)  \varphi]   \, Z_{st}^k, \qquad
A^{P,2}_{st}\varphi:=P[(\sigma_k\cdot\nabla) P [(\sigma_i\cdot\nabla)\varphi]]\mathbb{Z}^{i,k}_{st},\\
A^{Q,1}_{st}\varphi :=    Q  [(\sigma_k \cdot \nabla)  \varphi] \, Z_{st}^{k}, \qquad
A^{Q,2}_{st}\varphi :=   Q[(\sigma_k\cdot\nabla) P [(\sigma_i\cdot\nabla)\varphi]] \, \mathbb{Z}^{i,k}_{st}.
\end{gather*}
To do this derivation, let us assume  we have a solution $u \in L_T^2 \bH^1 \cap L_T^{\infty} \bH^0$.
If we set 
$$\mu_{\cdot}=\int_0^{\cdot} \left[\nu\Delta u_r - (u_r \cdot \nabla) u_r\right]  dr,$$ then by \eqref{trilinear form estimate} with $\alpha_1 = \alpha_3 = 1$ and $\alpha_2 = 0$, we have $\mu \in C^{1 -\textnormal{var}}([0,T]; \W^{-1,2})$. Iterating the first equation of \eqref{NSDiffFormSystem}  into itself gives 
\begin{align}
\delta u_{st} & = P \delta \mu_{st} +  \int_s^t P (\sigma_k \cdot  \nabla)   \left(u_s  + P\delta\mu_{sr}+   \int_s^r P (\sigma_i \cdot  \nabla) u_{r_1} \, dz^i_{r_1}\right) \, dz_r^k \notag \\
& = P \delta \mu_{st} +P ( \sigma_k \cdot  \nabla )   u_s Z^k_{st}  + \int_s^t P (\sigma_k \cdot \nabla)   \delta \mu_{sr}  \, dz^k_r +  \int_s^t P (\sigma_k \cdot \nabla) \int_s^r P (\sigma_i \cdot \nabla)  u_{r_1} \, dz^i_{r_1} \, dz^k_r \notag\\
& = P \delta \mu_{st} +P  (\sigma \cdot \nabla_k )  u_s Z^k_{st}  + \int_s^t P (\sigma_k \cdot \nabla )  \delta \mu_{sr}  \, dz^k_r + \notag \\
& \quad +  \int_s^t P (\sigma_k \cdot \nabla )\int_s^r P (\sigma_i \cdot \nabla)  \left(u_s + P \delta \mu_{sr_1}  + P \int_s^{r_1} (\sigma_j \cdot \nabla) u_{r_2} \, dz^j_{r_2}   \right) \, dz^i_{r_1} \, dz^k_r\notag \\
& = P \delta \mu_{st} +P ( \sigma_k \cdot \nabla )  u_s Z^k_{st}  + P( \sigma_k \cdot \nabla)  P (\sigma_i \cdot \nabla)   u_s \mathbb{Z}^{i,k}_{st} + \int_s^t P (\sigma_k \cdot \nabla )  \delta \mu_{sr}  \, dz^k_r + \notag \\
& \quad +  \int_s^t P( \sigma_k \cdot \nabla )\int_s^r P (\sigma_i \cdot \nabla)  \left( P \delta \mu_{sr_1}  + P \int_s^{r_1}( \sigma_j \cdot \nabla) u_{r_2} \, dz^j_{r_2}  \right)  \, dz^i_{r_1} \, dz_r^k\notag \\
& = P \delta \mu_{st} +P [ (\sigma_k \cdot \nabla )  u_s ]Z^k_{st}  + P [(\sigma_k  \cdot \nabla ) P [(\sigma_i \cdot \nabla )  u_s]] \mathbb{Z}^{i,k}_{st} + u_{st}^{P, \natural},\label{eq:iterationP}
\end{align}
where 
\begin{align*}
u_{st}^{P, \natural} &:= \int_s^t P  (\sigma_k \cdot \nabla  ) P\delta \mu_{sr}  \, dz^k_r \\
&\quad +  \int_s^t P (\sigma_k \cdot \nabla )\int_s^r P ( \sigma_i \cdot \nabla ) \left( P \delta \mu_{sr_1}  + P \int_s^{r_1} (\sigma_j \cdot \nabla) u_{r_2} \, dz^j_{r_2}   \right) \, dz^i_{r_1} \, dz^k_r.
\end{align*}
We expect $u_{st}^{P, \natural}$  be in $C^{\frac{p}{3} -\textnormal{var}}_2([0,T]; \bH^{-3})$ since $\mu \in C^{1 -\textnormal{ var}}([0,T]; \W^{-1,2})$ and $u \in L_T^{\infty} \bH^0$. 

Note  that  $Q \mu = - \int_0^{\cdot}Q [ (u_r \cdot \nabla ) u_r] dr.$ Then, iterating the first equation of \eqref{NSDiffFormSystem}   into second equation, we find
\begin{align*}
\delta \pi_{st} &  =  Q \delta \mu_{st} + Q \int_s^t ( \sigma_k \cdot \nabla ) u_r \, dz^k_r  \\
& =   Q \delta \mu_{st} +  Q \int_s^t ( \sigma_k \cdot \nabla ) \left( u_s + P \delta \mu_{sr}  + P \int_s^r ( \sigma_i \cdot \nabla ) u_{r_1} \, dz^i_{r_1} \right)  \, dz^k_r \\
& = Q \delta \mu_{st}+ Q  ( \sigma_k \cdot \nabla ) u_s Z^k_{st} +     Q \int_s^t ( \sigma_k \cdot \nabla )  P \delta \mu_{sr}    \, dz^k_r  \\
& \quad + Q \int_s^t ( \sigma_k \cdot \nabla ) P \int_s^r ( \sigma_i \cdot \nabla ) \left( u_{s}  +  P \delta \mu_{sr_1}  + P \int_s^{r_1} ( \sigma_j \cdot \nabla ) u_{r_2} \, dz^j_{r_2} \,  \right) dz^i_{r_1} \, dz^k_r \\
& =  Q \delta \mu_{st} + Q  [( \sigma_k \cdot \nabla ) u_s] Z^k_{st} + Q  [( \sigma_k \cdot \nabla ) P  [( \sigma_i \cdot \nabla )  u_{s}]] \mathbb{Z}^{i,k}_{st}  + u_{st}^{Q, \natural},
\end{align*}
where 
\begin{align*}
u_{st}^{Q, \natural} &= Q \int_s^t ( \sigma_k \cdot \nabla )  P \delta \mu_{sr}    \, dz_r^k\\
&\quad   + Q \int_s^t ( \sigma_k \cdot \nabla ) P \int_s^r ( \sigma_i \cdot  \nabla ) \left(   P \delta \mu_{sr_1}  + P \int_s^{r_1} ( \sigma_j \cdot \nabla ) u_{r_2} \, dz_{r_2}^j) \, dz_{r_1}^i \right)  \, dz_r^k,
\end{align*}
which is  expected to be in $C^{\frac{p}{3} - \textnormal{var}}_2([0,T]; \bH_{\perp}^{-3})$.

Equations \eqref{NSRoughFormSystem} are to be understood in the sense that we \emph{define} the remainder terms $u^{P, \natural}$ and $u^{Q, \natural}$ from the solution $u$ and $\pi$, and have to verify that they are indeed negligible remainders, namely, they are of order $o(|t-s|)$. This will be made precise in Definition \ref{def:solution2} below.

The pair $\mathbf{A}^P=(A^{P,1}, A^{P,2})$ is an unbounded rough driver (Definition \ref{def:urd}) on the scale $(\bH^{\alpha})_{\alpha \in \bR_+}$. Indeed,  the existence of a control  $\omega_{\mathbf{A}^P}$ such that \eqref{ineq:UBRcontrolestimates} holds follows from the discussion in Section \ref{ss:notation} and the fact that $(Z,\mathbb{Z})$ is a  $p$-rough path (Definition \ref{defi-rough-path}), which also implies  Chen's relation \eqref{eq:chen-relation}.  We note that control  $\omega_{\mathbf{A}^P}$  can be chosen to satisfy 
\begin{equation}\label{ineq:Leraycontrolestimate}
\omega_{A^P}(s,t) \le C\omega_{Z}(s,t),\;\;\forall (s,t)\in \Delta_T,
\end{equation}
for a constant $C>0$  depending only on $d$ and the bounds on $\sigma=(\sigma_1,\ldots,\sigma_K)$ and its derivatives up to order two.

The  pair $\mathbf{A}^Q=(A^{Q,1}, A^{Q,2})$ satisfies \eqref{ineq:UBRcontrolestimates} for the scale  $(\bH^{\alpha}_{\perp})_{\alpha\in \bR_+}$ with a control $\omega_{A^Q}$, which also satisfies the bound  \eqref{ineq:Leraycontrolestimate}.  However, $\mathbf{A}^Q$ is not  an unbounded rough driver  since it fails to satisfy Chen's relation \eqref{eq:chen-relation}. Nevertheless, it satisfies
\begin{equation} \label{quasiChen}
\delta A_{s \theta t}^{Q,2} = A_{\theta t}^{Q,1} A_{s \theta}^{P,1},\quad\text{for all}\quad  (s,\theta,t)\in\Delta^{(2)}_T,
\end{equation}
which  is the correct Chen's relation for the system of equations \eqref{NSDiffFormSystem} needed top establish the required time regularity of the remainder $u^{Q,\natural}$ (see Section \ref{sec: a priori} and Lemma \ref{PressureRemainderLemma} ). 

We will now define our first notion of solution to \eqref{NSDiffForm}.

\begin{definition}\label{def:solution2}
A pair of weakly continuous functions $(u, \pi) : [0,T] \rightarrow \bH^0 \times \bH_{\perp}^{-3}$ is called a solution of \eqref{NSDiffForm} if $u\in L^2_T\bH^1\cap L^{\infty}_T\bH^0$ and   $u^{P,\natural} : \Delta_T \rightarrow \bH^{-3}$ and $u^{Q, \natural}: \Delta_T \rightarrow \bH_{\perp}^{-3} $ defined for all $ \phi\in \bH^{3}$, $\psi \in \bH_{\perp}^3$ and $(s,t)\in \Delta_T$ by 
\begin{align} 
u_{st}^{P,\natural}(\phi)& :=   \delta u_{st} (\phi ) +  \int_s^t \left[\nu (\nabla u_r, \nabla   \phi) + B_P(u_r)(\phi) \right]\,dr   -  u_s([A_{st}^{P,1,*} +A_{st}^{P,2,*}]\phi) , \label{SystemSolutionU} \\
u_{st}^{Q,\natural}(\psi) & :=    \delta \pi_{st} (\psi ) + \int_s^t  B_Q(u_r)(\psi)\,dr   -  u_s ( [A_{st}^{Q,1,*}+A_{st}^{Q,2,*}] \psi ) , \label{SystemSolutionPi}
\end{align}
satisfy
\begin{equation} \label{SystemSolutionRemainder}
u^{P,\natural} \in C^{\frac{p}{3}-\textnormal{var}}_{2, \varpi,L}([0,T]; \bH^{-3}) \qquad \textnormal{and} \qquad u^{Q,\natural} \in C^{\frac{p}{3}-\textnormal{var}}_{2, \varpi,L}([0,T]; \bH_{\perp}^{-3}),
\end{equation}
for some control $\varpi$ and $L> 0$.
\end{definition}
\begin{remark}
Applying  \eqref{trilinear form estimate} with $\alpha_1=0, \alpha_2=2,$ and $\alpha_3=0$, we get
$$
B_P(u)(\phi)=B_P(u,u)(\phi)=B_P(u,\phi)(u)\lesssim |u|_{0}^2|\phi|_3,
$$
from which it follows that the $dr$-integral in  \eqref{SystemSolutionU} is well-defined since $u\in \bL^{\infty}_T\bH^0$.  One could  also obtain an estimate that requires less regularity on $\phi$ by  applying     \eqref{trilinear form estimate} with  $\alpha_1=1, \alpha_2=0, $ and $\alpha_3=1$ to get,
$$
|B_P(u)(\phi)|\lesssim |u|_1^2|\phi|_1,
$$
from which it follows that the $dr$-integral in   \eqref{SystemSolutionU}  is well-defined since $u\in \bL^{2}_T\bH^1$.  However,  we must test by $\phi \in \bH^3$ to ensure that the remainder term $u^{P,\natural}(\phi)$ has the required time regularity.  An analogous argument holds for the $B_Q$ term in \eqref{SystemSolutionRemainder}.
\end{remark}
\begin{remark}
In \eqref{SystemSolutionU} and \eqref{SystemSolutionRemainder}, we opt for distributional evaluation notation for most terms, and  continue to do so throughout the paper. That is,
$$
u_{st}^{P,\natural}(\phi)=\langle u_{st}^{P,\natural},\phi\rangle_{-3,3}, \;\; \delta u_{st}(\phi)=(\delta u_{st},\phi)_0,\quad u_s([A_{st}^{P,1,*} +A_{st}^{P,2,*}]\phi)_0= (u_s, [A_{st}^{P,1,*} +A_{st}^{P,2,*}]\phi)_0,
$$
$$
u_{st}^{Q,\natural}(\phi)=\langle u_{st}^{Q,\natural},\psi \rangle_{-3,3}, \quad u_s ( [A_{st}^{Q,1,*}+A_{st}^{Q,2,*}] \psi)_0= (u_s, [A_{st}^{Q,1,*} +A_{st}^{Q,2,*}]\psi)_0.
$$
\end{remark}
\begin{remark}
Due to \eqref{inclusionOfLocalVariation}, there is no restriction in taking the same $\varpi$ and $L>0$ for both local variation spaces in \eqref{SystemSolutionRemainder}.
\end{remark}

We will now discuss an alternative  way of formulating the equation. We can arrive at this formulation  by performing an iteration directly on \eqref{NSDiffForm}:
\begin{align*}
\delta u_{st} 
& = \delta \mu_{st} - \delta \pi_{st} + ( \sigma_k \cdot \nabla ) u_s  Z^k_{st} +  ( \sigma_k \cdot \nabla ) ( \sigma_i \cdot \nabla )  u_s  \mathbb{Z}^{i,k}_{st}  \\
& \quad+ \int_s^t ( \sigma_k \cdot \nabla )  \delta \mu_{sr}  \, dz^k_r   - \int_s^t ( \sigma_k \cdot \nabla ) \delta \pi_{sr}  \, dz_r^k\\
&\quad + \int_s^t ( \sigma_k \cdot \nabla ) \int_s^r ( \sigma_i \cdot \nabla ) \left(\delta \mu_{s r_1} - \delta \pi_{s r_1} + \int_s^{r_1} ( \sigma_j \cdot \nabla ) u_{r_2}  \, dz^j_{r_2} \right)  \, dz^i_{r_1}  \, dz^k_r . 
\end{align*}
The integral $\int_s^t  ( \sigma_k \cdot \nabla ) \delta \pi_{sr}   \,  dz^k_r $ is not regular enough in time for it to be a negligible remainder. Indeed, we expect $\pi$ to have finite $p$-variation, so that $\int_0^{\cdot} ( \sigma_k \cdot \nabla ) \delta \pi_{sr}  \,  dz^k_r $ should only have finite $\frac{p}{2}$-variation.  
If we define 
$$
\bar{u}^{\natural}_{st} = \int_s^t ( \sigma_k \cdot \nabla )   \delta \mu_{sr}   \, dz^k_r + \int_s^t ( \sigma_k \cdot \nabla )  \int_s^r ( \sigma_i \cdot \nabla ) \left(\delta \mu_{s r_2} - \delta \pi_{s r_2} + \int_s^{r_1}  ( \sigma_j \cdot \nabla ) u_{r_2}  \, dz^j_{r_2} \right)  \, dz^i_{r_1}  \, dz^k_r,
$$
then   we expect $\bar{u}^{\natural}$ to be in  $C_2^{\frac{p}{3} -\textnormal{var}}([0,T]; \W^{-3,2})$. Moreover, we have
$$
\delta u_{st} = \delta \mu_{st} - \delta \pi_{st} + ( \sigma_k \cdot \nabla )  u_s  Z^k_{st} +  ( \sigma_k \cdot   \nabla )  ( \sigma_i \cdot \nabla )  u_s  \mathbb{Z}^{i,k}_{st}  + \bar{u}_{st}^{\natural}  -\int_s^t ( \sigma_k \cdot \nabla )  \delta \pi_{sr} \, dz^k_r.
$$
In order to complete the formulation, we use the equation \eqref{NSDiffFormSystem}  for $\pi$ to deduce 
\begin{align*}
- \int_s^t ( \sigma_k \cdot \nabla )  \delta \pi_{sr}dz^k_r 
& =   - ( \sigma_k \cdot \nabla )   Q  (( \sigma_i \cdot \nabla )  u_s )  \mathbb{Z}^{i,k}_{st} + \int_s^t ( \sigma_k \cdot \nabla )  Q \delta  \mu_{sr}  dz^k_r  \\
& \quad - \int_s^t ( \sigma_k \cdot \nabla )  \int_s^r Q  ( \sigma_i \cdot \nabla )  \left( \delta \mu_{sr_1} - \delta \pi_{sr_1} + \int_s^{r_1} ( \sigma_j \cdot \nabla ) u_{r_1} dz_{r_1}^j \right)  dz_{r_1}^i  dz^k_r .
\end{align*}
All the terms above except for $( \sigma_k \cdot \nabla )   Q  \left[( \sigma_i \cdot \nabla )  u_s \right]  \mathbb{Z}^{i,k}_{st}$ belong to $C_2^{\frac{p}{3} -\textnormal{var}} ([0,T]; \W^{-3,2})$, and hence we may include them in a new remainder 
\begin{align*}
u^{\natural}_{st} &:= \bar{u}^{\natural}_{st} - \int_s^t ( \sigma_k \cdot \nabla )  Q \delta  \mu_{sr}  \, dz^k_r \\
&\quad  \quad  - \int_s^t ( \sigma_k \cdot \nabla )  \int_s^r Q  ( \sigma_i \cdot \nabla )  \left( \delta \mu_{sr_1} + \delta \pi_{sr_1}+ \int_s^{r_1} ( \sigma_j \cdot \nabla ) u_{r_1} \, dz^j_{r_2} \right)  \, dz^i_{r_1}  \, dz^k_r .
\end{align*}
Combining the above, we obtain
\begin{align*}
\delta u_{st} & = \delta \mu_{st} - \delta \pi_{st} + ( \sigma_k \cdot \nabla )  u_s  Z^k_{st} +  ( \sigma_k \cdot \nabla )  ( \sigma_i \cdot  \nabla )  u_s \mathbb{Z}^{i,k}_{st}   -  ( \sigma_k \cdot \nabla )  Q \left[ \sigma_i \cdot \nabla   u_s\right]  \mathbb{Z}^{i,k}_{st}   + u_{st}^{\natural} \\
& = \delta \mu_{st} - \delta \pi_{st} + ( \sigma_k \cdot \nabla )  u_s  Z^k_{st} +  ( \sigma_k \cdot  \nabla )  P  \left[( \sigma_i \cdot \nabla )  u_s\right]   \mathbb{Z}^{i,k}_{st}    + u_{st}^{\natural}.
\end{align*}
Thus, the pair $\mathbf{A}=(A^1,A^2)$  defined by $$A_{st}^1 \varphi = ( \sigma_k \cdot \nabla )  \varphi  Z^k_{st},\quad  \qquad A_{st}^2 \varphi = ( \sigma_k \cdot \nabla )  P  \left[( \sigma_k \cdot \nabla ) \varphi \right]  \mathbb{Z}^{i,k}_{st}$$
satisfies  \eqref{ineq:UBRcontrolestimates} for the scale  $(\bW^{\alpha,2})_{\alpha\in \bR_+}$ with  control $\omega_{A}$.
However, this pair does not satisfy Chen's relation \eqref{eq:chen-relation}, but does satisfy
\begin{equation}\label{quasiChen2}
\delta A^2_{s \theta t} = A_{\theta t}^1 P A_{s \theta}^1 ,\quad \forall (s,\theta,t)\in\Delta^{(2)}_T.
\end{equation}
Since $\mathbf{A}^P=P\mathbf{A}$ and $\mathbf{A}^Q=Q\mathbf{A}$,  the controls $\omega_{A^P}, \omega_{A^Q},$ and $\omega_{A}$ can be chosen so that
$$
\omega_{A^P}(s,t),\omega_{A^Q}(s,t)\le \omega_{A}(s,t)\le C\omega_{Z}(s,t),\;\;\forall (s,t)\in\Delta_T,
$$
where $C$ is a constant depending only on $d$ and the bounds on $\sigma=(\sigma_1,\ldots,\sigma_K)$ and its derivatives up to order two.

Thus, alternatively, we may  formulate a solution to \eqref{NSDiffForm} as follows.

\begin{definition}\label{def:solution}
A pair of weakly continuous functions $(u, \pi) : [0,T] \rightarrow \bH^0 \times \bH_{\perp}^{-3}$ is called a solution of \eqref{NSDiffForm} if  $u\in L^2_T\bH^1\cap L^{\infty}_T\bH^0$ and  $u^{\natural} : \Delta_T \rightarrow \W^{-3,2}$ defined  for all $ \phi\in \bW^{3,2}$ and $(s,t)\in \Delta_T$ by  
\begin{align}
u_{st}^{\natural}(\phi)&=  \delta u_{st} (\phi ) +  \int_s^t \left[\nu(\nabla u_r, \nabla   \phi) + B(u_r)(\phi)\right]\,dr   -  u_s ([A_{st}^{1,*} +A_{st}^{2,*}]\phi ) + \delta \pi_{st} (\phi) ,\label{RNSweak}
\end{align}
satisfies $u^{\natural} \in C^{\frac{p}{3}-\textnormal{var}}_{2, \varpi,L}([0,T]; \W^{-3,2})$ for some control $\varpi$ and $L> 0$. 
\end{definition}

The following lemma says that both formulations were derived in a consistent way and are equivalent.

\begin{lemma}\label{lem:equiv}
Definition \ref{def:solution2} and  \ref{def:solution} are equivalent.
\end{lemma}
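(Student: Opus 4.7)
My plan is to establish the pointwise algebraic relation
\begin{equation}\label{eq:sketchrelation}
u^{\natural}_{st}(\phi)= u^{P,\natural}_{st}(P\phi)+u^{Q,\natural}_{st}(Q\phi),\qquad \phi\in\bW^{3,2},\; (s,t)\in\Delta_T,
\end{equation}
which, restricted to $\phi\in\bH^3$ and to $\psi\in\bH^3_\perp$, encodes the reverse correspondence $u^{P,\natural}=u^\natural|_{\bH^3}$ and $u^{Q,\natural}=u^\natural|_{\bH^3_\perp}$. Once this bijection between $u^\natural$ and the pair $(u^{P,\natural},u^{Q,\natural})$ is in place, the equivalence of the two definitions reduces to checking (a) that the defining identities \eqref{SystemSolutionU}--\eqref{SystemSolutionPi} and \eqref{RNSweak} transform into one another under the correspondence, and (b) that the local $\frac{p}{3}$-variation regularity is preserved in both directions.

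For (a), the implementation is to sum \eqref{SystemSolutionU} evaluated at $P\phi$ with \eqref{SystemSolutionPi} evaluated at $Q\phi$ and match the result term-by-term against \eqref{RNSweak}; the converse is to test \eqref{RNSweak} against $\phi\in\bH^3$ and against $\psi\in\bH^3_\perp$. The verification rests on three elementary identities, all following from $I=P+Q$, the $\bL^2$-self-adjointness of $P$ and $Q$, and the orthogonality \eqref{direct sum prop}. First, $A^1=A^{P,1}+A^{Q,1}$, $A^2=A^{P,2}+A^{Q,2}$, and $B=B_P+B_Q$ hold directly from their definitions. Second, for $u_s\in\bH^0$, $j\in\{1,2\}$, and using $P^2=P$, $Q^2=Q$, one has $(u_s,A^{P,j,*}P\phi)+(u_s,A^{Q,j,*}Q\phi)=(A^{P,j}u_s,P\phi)+(A^{Q,j}u_s,Q\phi)=(A^j u_s,\phi)$, and the same manipulation gives $B_P(u_r)(P\phi)+B_Q(u_r)(Q\phi)=B(u_r)(\phi)$. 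Third, the cross terms vanish: $\delta u_{st}(Q\phi)=0$ since $\delta u_{st}\in\bH^0$, $\delta\pi_{st}(P\phi)=0$ since $\delta\pi_{st}\in\bH^{-3}_\perp$, and $(\nabla u_r,\nabla Q\phi)=-\langle\Delta u_r,Q\phi\rangle_{-1,1}=0$ because $\Delta u_r$ inherits the divergence-free property of $u_r$.

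For (b), since $|P\phi|_3,|Q\phi|_3\le|\phi|_3$, the relation \eqref{eq:sketchrelation} immediately yields the two-sided estimate $\max(|u^{P,\natural}_{st}|_{\bH^{-3}},|u^{Q,\natural}_{st}|_{\bH^{-3}_\perp})\le |u^{\natural}_{st}|_{-3}\le |u^{P,\natural}_{st}|_{\bH^{-3}}+|u^{Q,\natural}_{st}|_{\bH^{-3}_\perp}$. In the forward direction, if $\omega_P,\omega_Q$ control the two pieces, then $\omega:=\omega_P+\omega_Q$ is again a superadditive control, and the subadditivity of $t\mapsto t^{p/3}$ (valid since $p/3\in[\tfrac{2}{3},1)$) places $u^\natural$ in $C^{\frac{p}{3}-\mathrm{var}}_{2,\varpi,L}([0,T];\bW^{-3,2})$. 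Conversely, any control $\omega$ for $u^\natural$ serves simultaneously for both pieces.

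The argument is thus entirely algebraic; the main obstacle is not any new analytic estimate but the careful bookkeeping needed to check that every projected test function $P\phi$, $Q\phi$ remains in the correct scale space so that the operator bounds \eqref{A1bound}--\eqref{A2bound} and the trilinear inequality \eqref{trilinear form estimate} can be invoked. This is automatic since $P,Q$ have operator norm $1$ on each $\bW^{\alpha,2}$, and the already-recorded decompositions $\bW^{\alpha,2}=\bH^\alpha\oplus\bH^\alpha_\perp$ together with \eqref{direct sum prop} handle all the cross-term cancellations.
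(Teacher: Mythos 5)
Your proposal is correct and follows essentially the same route as the paper: decompose along $P$ and $Q$, observe $PA^{i}=A^{P,i}$, $QA^{i}=A^{Q,i}$, identify $u^{\natural}$ with the pair $(Pu^{\natural},Qu^{\natural})$, and check that the local $\tfrac{p}{3}$-variation bounds transfer by summing controls. The only difference is that you spell out the term-by-term cancellations (the vanishing of $\delta u_{st}(Q\phi)$, $\delta\pi_{st}(P\phi)$, and $(\nabla u_r,\nabla Q\phi)$) which the paper dismisses as ``straightforward.''
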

\begin{proof}
Clearly, $P A_{st}^{i} = A_{st}^{P,i}$ and $Q A_{st}^{i} = A_{st}^{Q,i}$ for $i\in \{1,2\}$. Moreover, the mapping 
\begin{align*}
C^{\frac{p}{3} -\textnormal{var}}_{2, \varpi,L}([0,T]; \W^{-3,2})  & \rightarrow  C^{\frac{p}{3} -\textnormal{var}}_{2,\varpi,L}([0,T]; \bH^{-3}) \times C^{\frac{p}{3} -\textnormal{var}}_{2,\varpi,L}([0,T]; \bH^{-3}_{\perp}) \\
u^{\natural} & \mapsto (u^{P,\natural},u^{Q,\natural}):= (Pu^{\natural}, Qu^{\natural}) 
\end{align*}
is continuous and invertible with inverse 
\begin{align*}
C^{\frac{p}{3} -\textnormal{var}}_{2,\varpi_1,L_1}([0,T]; \bH^{-3}) \times C^{\frac{p}{3} -\textnormal{var}}_{2,\varpi_2,L_2}([0,T]; \bH^{-3}_{\perp})   & \rightarrow  C^{\frac{p}{3} -\textnormal{var}}_{2, \varpi,L}([0,T]; \W^{-3,2}) \\
(u^{P,\natural}, u^{Q,\natural}) & \mapsto u^{P,\natural}+ u^{Q,\natural}
\end{align*}
where $\varpi := \varpi_1 + \varpi_2$ and $L := L_1 \wedge L_2$. The rest of the proof is straightforward.
\end{proof}
In  the remainder of the paper, we use Definition \ref{def:solution2}.

\subsection{Main results}

Let us now formulate our main results.

\begin{theorem} \label{existenceThmNoGalerkin}
Let $d\in \{2,3\}.$  Assume that for each $k\in \{1,\ldots,K\}$,  $\sigma_k : \bT^d \rightarrow \bR^d$  and its derivatives up to order two are bounded uniformly and that $\sigma_k$ is divergence-free. For a given $u_0\in \bH^0$ and $\mathbf{Z}\in  \mathcal{C}^{p-\textnormal{var}}_g([0,T];\bR^K)$,  there exists a solution of \eqref{NSDiffForm} in the sense of Definition \ref{def:solution2} satisfying the energy inequality
$$
\sup_{ t \in [0,T]} |u_t|^2_0 + \int_0^T |\nabla u_r|_0^2\,dr \le |u_0|_0^2.
$$ 
Moreover, $u\in C^{p-\textnormal{var}}([0,T];\bH^{-1})$ and $\pi \in C^{p-\textnormal{var}}([0,T];\bH^{-3}_{\perp})$.
\end{theorem}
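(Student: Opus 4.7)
The plan is to combine a Galerkin truncation with a mollification of the driving rough path, derive uniform energy and remainder bounds, and extract a subsequential limit that satisfies Definition~\ref{def:solution2}. Since $\mathbf{Z}\in\mathcal{C}^{p-\textnormal{var}}_g([0,T];\bR^K)$ is geometric, there is a sequence of smooth paths $z^n$ whose canonical lifts $\mathbf{Z}^n=(Z^n,\mathbb{Z}^n)$ converge to $\mathbf{Z}$ in the product $p$-variation topology with uniformly bounded controls $\omega_{Z^n}$. For each $n$ I would replace $\dot a$ in \eqref{NSDiffForm} by $\dot a^n=\sigma_k\dot z^{n,k}$ and project onto the span of the first $N$ Stokes eigenfunctions $\{h_j\}_{j=1}^N$ introduced in Section~\ref{ss:notation}. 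The resulting ODE is locally Lipschitz and admits a unique global solution $u^{n,N}$; testing against $u^{n,N}$ itself and exploiting the divergence-free property of $\sigma_k$ together with $b(v,v,v)=0$ from \eqref{eq:B prop}, one obtains the classical energy identity
\begin{equation*}
\sup_{t\in[0,T]}|u^{n,N}_t|_0^2+2\nu\int_0^T|\nabla u^{n,N}_r|_0^2\,dr\le |u_0|_0^2,
\end{equation*}
uniformly in $N$ and $n$. Sending $N\to\infty$ via an Aubin--Lions argument yields a Leray--Hopf solution $u^n$ of the smooth-driver problem, and by direct computation (as in the derivation preceding Definition~\ref{def:solution2}) the pair $(u^n,\pi^n)$ satisfies that definition with $\mathbf{Z}^n$ and explicit iterated-integral remainders $u^{n,P,\natural}, u^{n,Q,\natural}$.

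Feeding these approximate solutions into the a priori estimates of Section~\ref{sec: a priori}, and using that the associated driver controls satisfy $\omega_{A^{P,n}},\omega_{A^{Q,n}}\lesssim\omega_{Z^n}$ by \eqref{ineq:Leraycontrolestimate}, I would obtain a common control $\varpi$ and $L>0$, independent of $n$, such that
\begin{equation*}
|u^{n,P,\natural}|_{\frac{p}{3}-\textnormal{var},\varpi,L;[0,T];\bH^{-3}}+|u^{n,Q,\natural}|_{\frac{p}{3}-\textnormal{var},\varpi,L;[0,T];\bH^{-3}_\perp}\le C.
\end{equation*}
Unpacking \eqref{SystemSolutionU}--\eqref{SystemSolutionPi} with $\phi\in\bH^3$ then transfers these into uniform $p$-variation bounds for $u^n$ in $\bH^{-1}$ and for $\pi^n$ in $\bH^{-3}_\perp$, the convective integral being controlled by \eqref{trilinear form estimate} and the driver terms by \eqref{A1bound}--\eqref{A2bound}. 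Combined with the uniform energy estimate, an Aubin--Lions--Simon interpolation delivers, along a subsequence, $u^n\to u$ weakly-$\ast$ in $L^\infty_T\bH^0$, weakly in $L^2_T\bH^1$, and strongly in $L^2_T\bL^2$, while $\pi^n\to\pi$ in an appropriate weak sense in $C^{p-\textnormal{var}}([0,T];\bH^{-3}_\perp)$.

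It remains to pass to the limit in \eqref{SystemSolutionU}--\eqref{SystemSolutionPi}. The linear terms in $u^n$ converge by the weak convergence in $\bH^0$; the convective piece converges thanks to the strong $L^2_T\bL^2$ convergence and the bilinearity of $b$; and the driver terms converge pointwise in $(s,t)$ because the spatial operators $A^{i,n,*}_{st}\phi$ depend linearly on $Z^n_{st}$ and $\mathbb{Z}^n_{st}$, which converge in $p$- and $p/2$-variation respectively. Weak lower semi-continuity of the local $p/3$-variation seminorm (immediate from the Riemann-sum characterization displayed in the remark after \eqref{equivdefpvar}) ensures that the weak limits $u^{P,\natural}, u^{Q,\natural}$ inherit the class \eqref{SystemSolutionRemainder}. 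Weak continuity of $u$ and $\pi$ and the stated $p$-variation regularity then follow from the resulting identities and the variation bounds, and the energy inequality survives the limit by the standard weak lower semi-continuity of $|\cdot|_0$. The one genuinely delicate step is the \emph{uniform in $n$} remainder bound: this is where the rough-path machinery (sewing lemma, rough Gronwall) of Section~\ref{sec: a priori} is indispensable, since one must balance the $o(|t-s|)$ time regularity required by the sewing lemma against the loss of two spatial derivatives incurred by the iteration \eqref{three deriv}. Everything else reduces to compactness and the continuity of the multilinear maps $b$ and $\mathbf{A}$.
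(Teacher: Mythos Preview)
Your argument is correct and follows the same overall strategy as the paper: Galerkin truncation combined with smooth approximation of the geometric rough path, the divergence-free cancellation yielding the uniform energy bound, the Section~\ref{sec: a priori} machinery for uniform remainder control, and compactness to pass to the limit. Two organizational differences are worth noting. First, the paper couples the Galerkin level and the smoothing level into a single diagonal index $N$ (so $u^N$ solves the $N$-dimensional ODE driven by $z^N$), whereas you take a two-step limit $N\to\infty$ then $n\to\infty$; your route has the advantage that after the first limit $u^n$ is a genuine solution in the sense of Definition~\ref{def:solution2} for the smooth driver $\mathbf{Z}^n$, so Lemmas~\ref{Thm2.5}--\ref{PressureRemainderLemma} apply verbatim, while the paper must redo these estimates for the projected driver $(A^{N,1},A^{N,2})$ and in fact uses the alternative bound \eqref{RemainderEstimateWithoutMu} (with $(t-s)$ replacing $\omega_\mu$) precisely to avoid any $N$-dependence. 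Second, the paper does not carry the pressure through the approximation at all: once $u$ is obtained, $\pi$ is built directly from $u$ and $\mathbf{Z}$ via the sewing lemma (Section~\ref{s:pressure}), using that $\delta h_{s\theta t}=-A^{Q,1}_{\theta t}u^\sharp_{s\theta}-A^{Q,2}_{\theta t}\delta u_{s\theta}$ together with Lemmas~\ref{AprioriVariation} and~\ref{AprioriVariation1}. Your alternative of extracting a limit of $\pi^n$ works too, but the paper's construction is cleaner because it sidesteps any compactness argument for $\pi^n$ and makes the $C^{p-\textnormal{var}}([0,T];\bH^{-3}_\perp)$ regularity immediate from the sewing output.
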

The proof of this result is presented in Section \ref{Section:Galerkin} as a consequence of the stronger statement in Theorem \ref{existenceThm}. It proceeds in two steps: first (see Section \ref{s:galerkin}), the velocity field is constructed using compactness as a limit of suitable Galerkin approximations combined with an approximation of the driving signal $z$ by smooth paths in , second, the pressure is recovered (see Section \ref{s:pressure}).

For two space dimensions and constant vector fields,  we prove that the solution $(u,\pi)$ is unique  as a consequence of the stronger statement  Theorem \ref{contractiveTheorem}. In Section \ref{s:uniq}, we prove uniqueness via a tensorization argument, which allows us to estimate the difference of two solutions by the difference of their initial conditions. We remark that one cannot directly use  the techniques from \cite{DeGuHoTi16}, since this way of approximating the Dirac-delta violates the divergence-free condition.  

\begin{theorem} \label{contractiveTheoremNoContraction}
If $d=2$ and $\sigma_k$ is constant function of $x\in \bT^d$ for all $k\in \{1,\dots, K\}$, then for a given $u_0\in \bH^0$ and $\mathbf{Z}\in  \mathcal{C}^{p-\textnormal{var}}_g([0,T];\bR^K),$ there exists a unique solution of \eqref{NSDiffForm}. Moreover, $u\in C_T\bH^0$, $\pi \in C^{p-\textnormal{var}}([0,T];\bH^{-1}_{\perp})$, and
$$
\sup_{ t \in [0,T]} |u_t|^2_0 +2\nu \int_0^T |\nabla u_r|_0^2\,dr= |u_0|_0^2.
$$
\end{theorem}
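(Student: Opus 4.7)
The existence and the energy inequality are already contained in Theorem~\ref{existenceThmNoGalerkin}, so only four things remain to be established: uniqueness, strong continuity $u\in C_T\bH^0$, the energy equality, and the improved regularity $\pi\in C^{p-\textnormal{var}}([0,T];\bH^{-1}_{\perp})$. All four follow from the stronger stability statement of Theorem~\ref{contractiveTheorem}: for any two solutions $(u^i,\pi^i)$, $i=1,2$, driven by the same geometric rough path $\bZ$, the estimate $\sup_{t\in[0,T]}|u^1_t-u^2_t|_0^2\lesssim|u^1_0-u^2_0|_0^2$ holds, with constants depending only on $\omega_Z$ and the initial energy.

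The direct route --- testing the rough equation for $v:=u^1-u^2$ against $v$ itself --- is forbidden because the remainder lives in $\bH^{-3}$ and demands a test function in $\bH^3$, while $v$ is only in $\bH^0$. I would bypass this by tensorization: work with $U_t(x,y):=u^1_t(x)\otimes u^2_t(y)$ on $\bT^d\times\bT^d$ and test the rough equation satisfied by $U$ against a smooth, divergence-free, symmetric approximation of the diagonal Dirac. Because each $\sigma_k$ is constant, the tensorized first-level operator $\sigma_k\cdot(\nabla_x+\nabla_y)$ is still a constant-coefficient, divergence-free, formally skew-adjoint first-order operator on $\bT^{2d}$, and together with its iterate against $\mathbb{Z}^{i,k}$ and the Leray projector on the product torus it produces an unbounded rough driver in the sense of Definition~\ref{def:urd}. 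Combining the tensorized equation with the individual rough equations for $u^1,u^2$ then yields a rough equation for $E_t:=|v_t|_0^2+2\nu\int_0^t|\nabla v_r|_0^2\,dr$ with remainder in $C^{\frac{p}{3}-\textnormal{var}}_{2,\varpi,L}$. The convective cross-term $b(u^1,v,v)+b(v,u^2,v)$ is controlled through the 2D Ladyzhenskaya estimate~\eqref{ineq:Lady est} and Young's inequality, which absorbs $|\nabla v|_0^2$ into the viscous term; applying the rough Gronwall lemma from Section~\ref{sec: a priori} closes the estimate and gives $E_t\lesssim E_0$. Setting $u^1_0=u^2_0$ produces uniqueness; strong continuity and the energy equality are then obtained by testing the equation against a mollification $J^\eta u$ and letting $\eta\to 0$ (the same Ladyzhenskaya bound being what allows the inequality to be upgraded to an equality in $d=2$), and the improved bound on $\pi$ comes from reading~\eqref{SystemSolutionPi} as an identity and estimating its right-hand side using $u\in L^\infty_T\bH^0\cap L^2_T\bH^1$ together with~\eqref{trilinear form estimate}.

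The principal obstacle, and the reason constant vector fields are required, is that the standard diagonal mollifier used in \cite{DeGuHoTi16} destroys the solenoidal constraint --- this is exactly the issue flagged in the remark preceding the theorem. A divergence-preserving diagonal approximation has to be built by hand, either from the Leray-invariant Fourier truncation $S_N$ on $\bT^{2d}$ (which leaves $\bH^\alpha$ and $\bH^\alpha_\perp$ stable) or by Leray-projecting after smoothing and controlling the commutator with $\sigma_k\cdot(\nabla_x+\nabla_y)$; this commutator vanishes precisely because $\sigma_k$ is $x$-independent, which is what confines the argument to constant vector fields. The remaining delicacy is to check that the diagonal limits of the tensorized remainders retain a local $\frac{p}{3}$-variation bound under a single control, so that the rough Gronwall lemma can in fact be applied; the rest of the argument then fits the standard variational template already used in Theorem~\ref{existenceThmNoGalerkin}.
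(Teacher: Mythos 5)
Your proposal follows the paper's strategy in its essentials: uniqueness via tensorization of $v=u^1-u^2$ tested against a divergence-free approximation of the diagonal (the paper uses $F_N(x,y)=\sum_{|n|\le N}\mathbf{f}_n(x)\otimes\overline{\mathbf{f}_n(y)}$, built from the solenoidal Fourier basis, which satisfies $\nabla_xF_N+\nabla_yF_N=0$), followed by the Ladyzhenskaya estimate \eqref{ineq:Lady est}, Young's inequality and Gronwall. The one substantive divergence is how the rough terms are disposed of. You plan to carry a rough remainder into the equation for the energy and close with a ``rough Gronwall lemma from Section \ref{sec: a priori}''; no such lemma is stated in this paper, and none is needed. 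The point of the constant-coefficient hypothesis is that the tensorized drivers annihilate $F_N$ \emph{exactly}: $\Gamma^{1,*}F_N=0$ because $\sigma_k\cdot(\nabla_x+\nabla_y)F_N=0$, and $\Gamma^{2,*}F_N=0$ using in addition the geometricity identity $\mathbb{Z}^{j,k}_{st}+\mathbb{Z}^{k,j}_{st}=Z^j_{st}Z^k_{st}$. Consequently the tested remainder $v^{\otimes 2,\natural}_{st}(F_N)$ is simultaneously the increment of a one-index path and of finite $\tfrac{p}{3}$-variation with $\tfrac{p}{3}<1$, hence identically zero; the resulting identity for $|v_t|_0^2$ is classical and ordinary Gronwall suffices. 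Your version, retaining a rough remainder, would additionally require a priori bounds of that remainder in terms of the energy itself, in the style of \cite{DeGuHoTi16} --- feasible but much heavier, and it obscures what the constancy of $\sigma_k$ actually buys.

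Two smaller gaps. The improved regularity $\pi\in C^{p-\textnormal{var}}([0,T];\bH^{-1}_{\perp})$ does not follow from merely estimating the right-hand side of \eqref{SystemSolutionPi} with \eqref{trilinear form estimate}; that argument only yields $\bH^{-3}_{\perp}$. It follows because for constant $\sigma_k$ one has $A^{Q,i}_{st}u_s=0$ for $i\in\{1,2\}$ (a constant-coefficient directional derivative of a divergence-free field is divergence-free, so its $Q$-projection vanishes), whence $\pi_t=-\int_0^t B_Q(u_r)\,dr\in C^{1-\textnormal{var}}([0,T];\bH^{-1}_{\perp})$. Finally, the energy equality and strong continuity need no separate mollification argument: setting $u^2=0$ in the difference identity gives the energy equality directly, and strong continuity in $\bH^0$ then follows from weak continuity together with continuity of the norm $t\mapsto|u_t|_0$.
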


Owing to Theorem \ref{contractiveTheoremNoContraction}, in dimension two, there exists a solution map $\Gamma$  that maps  every   initial condition $u_0\in \bH^0$,  family of constant vector fields  $\sigma_k$, $k\in \{1,\ldots,K\}$, and  continuous geometric $p$-rough path $\bZ=(Z,\mathbb{Z})$
to a unique solution $(u,\pi)$  of  \eqref{NSDiffForm}.
The following stability result is proved in Section \ref{s:stab}. 

\begin{corollary}\label{cor:stability}
In dimension two and for  constant vector fields $\sigma_k$, $k\in \{1,\ldots,K\}$,  the solution map 
\begin{align*}
\Gamma:\bH^0\times\bR^{2\times K}\times \mathcal{C}^{p-\textnormal{var}}_g([0,T];\bR^K)&\to L^2_T\bH^0\cap C_T\bH_w^0\times C^{1-\textnormal{var}}([0,T]; \bH_{\perp}^{-2})\\
(u_0,\sigma,\bZ)&\mapsto (u,\pi)
\end{align*}
is continuous.
\end{corollary}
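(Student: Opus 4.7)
Let $(u_0^n,\sigma^n,\bZ^n)\to(u_0,\sigma,\bZ)$ in the product topology, and set $(u^n,\pi^n):=\Gamma(u_0^n,\sigma^n,\bZ^n)$ and $(u,\pi):=\Gamma(u_0,\sigma,\bZ)$. Since each $\sigma_k^n\in\bR^2$ is spatially constant, the operator $\sigma_k^n\cdot\nabla$ is a constant-coefficient Fourier multiplier that commutes with both $P$ and $Q$ and preserves the divergence-free constraint. In particular, for any divergence-free $\varphi$ one has $Q[(\sigma_k^n\cdot\nabla)\varphi]=0$, so $A^{Q,1,n}u^n_s=A^{Q,2,n}u^n_s=0$, and \eqref{SystemSolutionPi} collapses to
\[
\delta\pi^n_{st}=-\int_s^t B_Q(u^n_r)\,dr+u^{Q,\natural,n}_{st}.
\]
Testing against $\psi\in\bH^3_\perp$ and using the weak continuity of $\pi^n$, the real-valued path $t\mapsto\pi^n_t(\psi)+\int_0^t B_Q(u^n_r)(\psi)\,dr$ vanishes at $t=0$, is continuous, and has finite $\frac{p}{3}$-variation by \eqref{SystemSolutionRemainder}; since $\frac{p}{3}<1$, any continuous scalar path of finite $q$-variation with $q<1$ is constant, whence $\pi^n_t=-\int_0^t B_Q(u^n_r)\,dr$ in $\bH^{-3}_\perp$, and analogously $\pi_t=-\int_0^t B_Q(u_r)\,dr$.

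\textbf{Velocity convergence.} From the energy identity of Theorem \ref{contractiveTheoremNoContraction},
\[
\sup_n\Bigl(\sup_{t\in[0,T]}|u^n_t|_0^2+2\nu\int_0^T|\nabla u^n_r|_0^2\,dr\Bigr)=\sup_n|u_0^n|_0^2<\infty.
\]
The quantitative stability estimate of Theorem \ref{contractiveTheorem}, proved in Section \ref{s:uniq} via tensorization, bounds $|u^n-u|_{C_T\bH^0}$ by quantities that vanish as $(u_0^n,\sigma^n,\bZ^n)\to(u_0,\sigma,\bZ)$. This yields $u^n\to u$ in $C_T\bH^0$, hence in $C_T\bH^0_w$, and by dominated convergence together with the uniform $L^\infty_T\bH^0$ bound, also in $L^2_T\bH^0$.

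\textbf{Pressure convergence.} By the first step and the bilinearity of $B_Q$,
\[
\delta(\pi^n-\pi)_{st}=-\int_s^t\bigl[B_Q(u^n_r-u_r,u^n_r)+B_Q(u_r,u^n_r-u_r)\bigr]\,dr.
\]
The identity $b(u,v,w)=-b(u,w,v)$ for divergence-free $u$ together with \eqref{trilinear form estimate} at $(\alpha_1,\alpha_2,\alpha_3)=(0,1,0)$ (valid in $d=2$ since $0+1+0\ge d/2$) gives $|B_Q(u,v)|_{\bH^{-2}_\perp}\lesssim|u|_0|v|_0$, whence
\[
|\pi^n-\pi|_{1-\textnormal{var};[0,T];\bH^{-2}_\perp}\le\int_0^T|B_Q(u^n_r)-B_Q(u_r)|_{-2}\,dr\lesssim\int_0^T(|u^n_r|_0+|u_r|_0)\,|u^n_r-u_r|_0\,dr,
\]
and the right-hand side tends to $0$ by dominated convergence, using the convergence $u^n\to u$ in $C_T\bH^0$ and the uniform $L^\infty_T\bH^0$ bound.

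\textbf{Main obstacle.} The heart of the argument is the velocity convergence, which requires the stability estimate of Theorem \ref{contractiveTheorem} to be jointly continuous in $(u_0,\sigma,\bZ)$. Dependence on $u_0$ and on $\bZ$ is the usual rough-path stability; the $\sigma$-dependence can be absorbed into the effective driving signal $\sum_k\sigma_k^n z^{n,k}$, a $\bR^2$-valued path depending linearly on $\sigma^n$ whose canonical rough lift is continuous in $(\sigma^n,\bZ^n)$ in the $p$-variation topology. Alternatively, one inspects the tensorization argument of Section \ref{s:uniq} directly and treats the $\sigma^n-\sigma$ terms as bounded perturbations of the unbounded rough drivers, whose contribution is controlled by the uniform energy bound on $u^n$.
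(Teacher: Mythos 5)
Your treatment of the pressure is correct and coincides with the paper's: for constant $\sigma_k$ the operators $A^{Q,i}$ vanish on divergence-free fields, $u^{Q,\natural}$ is forced to be zero (a continuous increment of finite $\tfrac{p}{3}$-variation with $\tfrac{p}{3}<1$), so $\pi^n_t=-\int_0^t B_Q(u^n_r)\,dr$, and the $C^{1-\textnormal{var}}([0,T];\bH^{-2}_\perp)$ convergence follows from the trilinear estimates once the velocities converge in $L^2_T\bH^0$.

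The velocity step, however, contains a genuine gap. Theorem \ref{contractiveTheorem} and the bound \eqref{eq:EnergydiffInequality} compare two solutions of the \emph{same} equation: the tensorization argument hinges on $u^1$ and $u^2$ being driven by identical unbounded rough drivers, so that the difference $\varv$ again satisfies an equation with drivers $\Gamma^1,\Gamma^2$ and the rough terms are annihilated by $\Gamma^{i,*}F_N=0$. When $u^n$ is driven by $(\sigma^n,\bZ^n)$ and $u$ by $(\sigma,\bZ)$, the difference equation picks up cross terms of the form $(A^{P,1,n}-A^{P,1})u$ and $(A^{P,2,n}-A^{P,2})u$, which do not cancel against $F_N$ and would have to be estimated as rough integrals against the limiting (non-smooth) solution $u$; this is precisely the quantitative rough-path stability that the paper does \emph{not} establish, and your phrase ``the usual rough-path stability'' papers over it. Neither of the two fixes sketched in your last paragraph is carried out, and the second (treating $\sigma^n-\sigma$ as a bounded perturbation) still leaves the $\bZ^n$-dependence untouched. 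The paper avoids this entirely by a compactness--uniqueness argument: the energy equality plus Lemmas \ref{Thm2.5} and \ref{AprioriVariation} give bounds on $\{u^n\}$ in $L^\infty_T\bH^0\cap L^2_T\bH^1\cap C^{p-\textnormal{var}}([0,T];\bH^{-1})$ that are uniform in $n$ because the controls depend only on $|u_0^n|_0$, $|\sigma^n|$ and the rough-path norms of $\bZ^n$; Lemmas \ref{CompactnessLemma} and \ref{lem:weakc} then yield a subsequence converging in $L^2_T\bH^0\cap C_T\bH^0_w$; one passes to the limit in the equation as in the existence proof; and uniqueness (Theorem \ref{contractiveTheoremNoContraction}) upgrades subsequential convergence to convergence of the full sequence. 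Note that this route only delivers the convergence in $L^2_T\bH^0\cap C_T\bH^0_w$ actually asserted in the corollary, not the $C_T\bH^0$ convergence you claim; to salvage your argument you would need to either carry out the perturbed tensorization in detail or switch to the compactness route.
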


\begin{remark}
It is tempting to try and rewrite \eqref{eq:classicalForm} using a flow-transformation by following the ideas in\cite{FrOb14, CaFrOb11} and  \cite{DiFrOb14}.  More specifically, suppose that there is sufficiently regular invertible map $\varphi : [0,T] \times \bT^d \rightarrow \bT^d $ such that 
$$
\dot{\varphi}_t(x) =   \dot{a}_t( \varphi_t(x))  , \hspace{.3cm} \varphi_0(x) = 0 ,
$$
and let us define $\varv_t(x) := u_t( \varphi_t(x))$. Differentiating in time, we find
\begin{align*}
\partial_t \varv_t(x) & = \partial_t u_t(\varphi_t(x)) +  \dot{a}_t( \varphi_t(x))  \cdot \nabla u_t( \varphi_t(x) )  \\
& =  \nu \Delta u_t( \varphi_t(x)) - u_t( \varphi_t(x)) \cdot  \nabla u_t( \varphi_t(x)) - \nabla p _t( \varphi_t(x)) ,
\end{align*}
which could be rewritten  in terms of $v$ using $\nabla \varv_t(x) = \nabla u_t( \varphi_t(x)) \nabla \varphi_t(x)$ provided $\varphi_t(\cdot)$ is a diffeomorphism. 
If we assume all the driving vector fields are divergence-free, then we have $ \textnormal{det} (\nabla \varphi_t(x) ) = 1$ so that the equation for $\varv$ is a Navier-Stokes-type  equation, including coefficients from a unimodular matrix depending on $t$ and $x$. This could account for further difficulties, but it seems plausible that one can solve such an equation. The added value of the construction we present in this paper is that it allows for an intrinsic notion of solution to \eqref{eq:classicalForm} and estimates of the corresponding rough integral.
\end{remark}

\begin{remark}
In three dimensions, it is known that the Stratonovich Navier-Stokes equation
\begin{align*} 
du + (u \cdot \nabla) u\, dt   +  \nabla p& =  \nu \Delta u\, dt   + \nabla u \circ dw
\end{align*}
has a probabilistically weak solution (see, e.g., \cite{brzezniak1992stochastic, flandoli1995martingale,mikulevicius2005global}). Nevertheless,   whether the solution probabilistically strong is still an open question. In other words, it is not known whether the solution to the above equation is adapted to the filtration generated by the Wiener process $w$. Even though a prime example of a driving rough path in our equation is a Wiener process with its Stratonovich lift and solving rough PDEs corresponds to a non-probabilistic (pathwise) construction of solutions, we still can not answer this question at this point. The reader should note that using the compactness criterion Lemma \ref{CompactnessLemma}, we obtain a  subsequence of the approximate solutions that a priori depends the randomness variable $\omega$ (not a control).
The question whether the full sequence converges is very difficult to answer, as it is intimately related to the issue of uniqueness. Indeed, if uniqueness held true in three-dimensions, then every subsequence of $\{ u^N\}_{N=1}^{\infty}$ would converge to the same limit, and hence the full sequence would converge. As a consequence,  the proof of stability in Corollary \ref{cor:stability} would imply that the solution $(u,\pi)$ depends continuously on the given data $(u_0,\sigma,\bZ)$ and is thus adapted to the filtration generated by the Brownian motion.
\end{remark}
\section{A priori estimates of remainders}
\label{sec: a priori}

In this section, we derive a priori estimates of the remainder terms $u^{P,\natural}$ and $u^{Q,\natural}$ and  $|u|_{p-\textnormal{var};[0,T];\bH^{-1}}$.
Let $(u,\pi)$ be a solution of \eqref{NSDiffFormSystem} in the sense of Definition \ref{def:solution2}. For $t \in [0,T]$, we let 
$$
\mu_{t}(\phi) =  - \int_0^t \left[\nu (\nabla u_r,\nabla \phi) +B_P(u_r)(\phi) \right]\,dr, \quad \phi \in \bH^1.
$$ 
It follows that for  $(s,t)\in \Delta_T$,
\begin{equation}\label{RNS2}
\delta u_{st} =  \delta \mu_{st} + A_{st}^{P,1} u_s+ A_{st}^{P,2}u_s + u^{P,\natural}_{st},
\end{equation}
where the equality holds in $\bH^{-3}$.
For all $(s,t) \in \Delta_T$, let
$$
\omega_{\mu}(s,t) =  \int_s^t  (1 + |u_r|_1)^2 \, dr,
$$
where we recall that $|\cdot|_1$ denotes the $\bH^1$-norm.
Since $u \in L_T^2 \bH^1$, $\omega_{\mu}$ is a control. Using  \eqref{trilinear form estimate} with $\alpha_1 = \alpha_3 = 1$ and $\alpha_2 = 0$, we obtain $|B_P(u_r)|_{-1} \lesssim |u_r|_1^2$, and hence
$
|\delta \mu_{st} |_{-1} \lesssim \omega_{\mu}(s,t).
$

We begin with an important lemma which provides an estimate of $u^{P,\natural}$ in terms of the given data. The following result is a special case of \cite[Theorem 2.5]{DeGuHoTi16}, but we include a proof for the readers convenience. Let us  define the map
\begin{equation}\label{eq:defin of sharp}
u^\sharp_{st}:=\delta u_{s t}-A^{P,1}_{st}u_s=\delta\mu_{st}+A^{P,2}_{st}u_s+u^{P,\natural}_{st}.
\end{equation}
The first expression for $u^\sharp_{st}$ consists of terms that are less regular in time  and more regular in space than the second expression for  $u^\sharp_{st}$. We use this fact along with the smoothing operators and the sewing lemma \eqref{ineq:sewing estimate} to estimate the remainder terms. 
\begin{lemma} \label{Thm2.5}
Assume that $(u,\pi)$ solves  \eqref{NSDiffForm} according to Definition \ref{def:solution2}. For $(s,t)\in \Delta_T$ such that  $\varpi(s,t)\le L$, let $\omega_{P, \natural }(s,t) := |u^{P, \natural}|^{\frac{p}{3}}_{\frac{p}{3} -var; [s,t];\bH^{-3}}.$
Then  there is a constant $\tilde{L}>0$, depending only on $p$ and $d$,  such that  for all $(s,t)\in \Delta_T$ with  $\varpi(s,t)\le L$ and $\omega_{A}(s,t)\leq \tilde{L}$,
\begin{equation} \label{RemainderEstimateMu}
\omega_{P, \natural }(s,t)  \lesssim_{p}|u|^{\frac{p}{3}}_{L^{\infty}_T\bH^0}  \omega_A(s,t) + \omega_{\mu}(s,t)^{\frac{p}{3}} (\omega_A(s,t)^{\frac{1}{3}}  + \omega_A(s,t)^{\frac{2}{3}})
\end{equation}
and
\begin{equation} \label{RemainderEstimateWithoutMu}
\omega_{P, \natural }(s,t)  \lesssim_{p} |u|^{\frac{p}{3}}_{L^{\infty}_T\bH^0}  \omega_A(s,t) + ( 1 + |u|_{L^{\infty}_T\bH^0} )^{\frac{2p}{3}}(t-s)^{\frac{p}{3}} \omega_A(s,t)^{\frac{1}{12}}.
\end{equation}
\end{lemma}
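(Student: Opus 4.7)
The strategy is to apply the sewing-type Lemma~\ref{sewingLemma} to the two-index residual $u^{P,\natural}$, in the spirit of \cite[Theorem~2.5]{DeGuHoTi16}. First I would compute the three-index increment. Starting from \eqref{eq:defin of sharp}, which reads $u^{P,\natural}_{st}=u^\sharp_{st}-\delta\mu_{st}-A^{P,2}_{st}u_s$, and using that the $\delta$-operator annihilates every one-index increment together with the Chen relation $\delta A^{P,2}_{s\theta t}=A^{P,1}_{\theta t}A^{P,1}_{s\theta}$ from Definition~\ref{def:urd} (and $\delta A^{P,1}_{s\theta t}=0$), a direct manipulation gives the clean identity
\begin{equation*}
\delta u^{P,\natural}_{s\theta t}\;=\;A^{P,1}_{\theta t}\,u^\sharp_{s\theta}\;+\;A^{P,2}_{\theta t}\,\delta u_{s\theta}.
\end{equation*}

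The core of the proof is to bound the right-hand side in $\bH^{-3}$ by an expression of the form $\omega(s,t)^{3/p}$ for a suitable control $\omega$, since this is exactly what feeds into the sewing lemma ($3/p>1$ for $p\in[2,3)$). The key observation is that $u^\sharp$ admits two representations: a coarse one $u^\sharp_{s\theta}=\delta u_{s\theta}-A^{P,1}_{s\theta}u_s\in \bH^{-1}$ with bound $|u|_{L^\infty_T\bH^0}(2+\omega_A(s,\theta)^{1/p})$ but no smallness as $\theta\to s$, and a fine one $u^\sharp_{s\theta}=\delta\mu_{s\theta}+A^{P,2}_{s\theta}u_s+u^{P,\natural}_{s\theta}\in \bH^{-3}$ with bound $\omega_\mu(s,\theta)+\omega_A(s,\theta)^{2/p}|u|_{L^\infty_T\bH^0}+|u^{P,\natural}_{s\theta}|_{-3}$. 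I would interpolate between these by splitting $u^\sharp_{s\theta}=J^\eta u^\sharp_{s\theta}+(I-J^\eta)u^\sharp_{s\theta}$ through the smoothing operator and applying $A^{P,1}_{\theta t}:\bH^{-2}\to \bH^{-3}$ to each piece, using $|J^\eta u^\sharp|_{-2}\lesssim \eta^{-1}|u^\sharp|_{-3}$ and $|(I-J^\eta)u^\sharp|_{-2}\lesssim \eta\,|u^\sharp|_{-1}$ from \eqref{smoothingOperator}. The summand $A^{P,2}_{\theta t}\delta u_{s\theta}$ is treated analogously using the raw bound $|\delta u|_0\le 2|u|_{L^\infty_T\bH^0}$ together with the decomposition of $\delta u$ coming from \eqref{RNS2}. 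Optimizing $\eta>0$ leads to an estimate of the schematic form
\begin{equation*}
|\delta u^{P,\natural}_{s\theta t}|_{-3}\ \lesssim\ \omega(s,t)^{3/p}\;+\;C\,\omega_A(s,t)^{1/p}\,|u^{P,\natural}_{s\theta}|_{-3},
\end{equation*}
where $\omega$ is a control built from $\omega_A$, $\omega_\mu$, and $|u|_{L^\infty_T\bH^0}$.

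Applying the sewing lemma yields $\omega_{P,\natural}(s,t)\lesssim \omega(s,t)+C'\omega_A(s,t)^{1/p}\,\omega_{P,\natural}(s,t)$. The principal obstacle is this self-referential term, arising from the presence of $|u^{P,\natural}|_{-3}$ inside the fine representation of $u^\sharp$. I would close the argument by restricting to $(s,t)$ with $\omega_A(s,t)\le \tilde L$, where $\tilde L>0$ is chosen small enough (depending only on $p$ and $d$) that $C'\omega_A(s,t)^{1/p}\le \tfrac{1}{2}$; the last term is then absorbed on the left-hand side, producing \eqref{RemainderEstimateMu}. The second estimate \eqref{RemainderEstimateWithoutMu} follows from the same scheme but after replacing $\omega_\mu$ in the intermediate bounds by a coarser Cauchy--Schwarz estimate on the viscous and convective pieces of $\mu$, which trades smallness in $\omega_\mu$ for powers of $(t-s)$ and $|u|_{L^\infty_T\bH^0}$; reoptimizing $\eta$ then produces the specific exponent $1/12$ on $\omega_A$.
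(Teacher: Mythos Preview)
Your plan is essentially the paper's proof: both compute $\delta u^{P,\natural}_{s\theta t}=A^{P,1}_{\theta t}u^\sharp_{s\theta}+A^{P,2}_{\theta t}\delta u_{s\theta}$, exploit the two representations of $u^\sharp$ through the smoothing operator, feed the result into the sewing lemma, and absorb the self-referential $\omega_{P,\natural}$ term on the left. The only structural difference is that the paper applies $J^\eta$ to the \emph{test function} $\phi$ rather than to $u^\sharp$ and $\delta u$; since $J^\eta$ is self-adjoint this is nearly the same computation. For the absorption step the paper introduces a free parameter $\lambda$ via $\eta=\omega_A(s,t)^{1/p}\lambda$ and chooses $\lambda$ large (then $\tilde L$ so that $\eta\le 1$), which is equivalent to your ``optimise $\eta$, then take $\tilde L$ small'' scheme.

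One point to watch: your literal interpolation $|J^\eta u^\sharp|_{-2}\lesssim \eta^{-1}|u^\sharp|_{-3}$, applied wholesale to the fine representation, produces a term $\omega_A^{1/p}\eta^{-1}\omega_\mu$, which after $\eta\sim\omega_A^{1/p}$ is just $\omega_\mu$ with no residual power of $\omega_A$; this cannot be written as $\omega(s,t)^{3/p}$ for a control $\omega$, so sewing would fail there. The fix is immediate: estimate $J^\eta\delta\mu$, $J^\eta A^{P,2}u_s$ and $J^\eta u^{P,\natural}$ separately, using the native $\bH^{-1}$ and $\bH^{-2}$ regularity of the first two (no $\eta^{-1}$ loss) and reserving the $\eta^{-1}$ only for $u^{P,\natural}$. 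This is precisely what the paper's test-function-side smoothing achieves automatically, and is why that formulation is marginally cleaner. The second estimate \eqref{RemainderEstimateWithoutMu} is obtained exactly as you say, replacing the $\bH^{-1}$ bound on $\delta\mu$ by the $\bH^{-(3-\epsilon)}$ bound $|\delta\mu_{st}|_{-(3-\epsilon)}\lesssim (t-s)(1+|u|_{L^\infty_T\bH^0})^2$ (via \eqref{trilinear form estimate} with $\alpha_1=\alpha_3=0$); the paper takes $\epsilon=\tfrac14$ to arrive at the exponent $\tfrac{1}{12}$.
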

\begin{proof}
Recall that the second-order increment operator $\delta$ is defined on two index maps $g:\Delta_T^{(2)}\rightarrow \bR$  by $\delta g_{s \theta t} := g_{st} - g_{\theta t} - g_{s \theta}$ for all $(s,\theta,t)\in \Delta_T^{(2)}$. It is easy to see that for a one-index map $f$, we have $\delta (\delta f)_{s \theta t} = 0$.
Applying $\delta$ to \eqref{SystemSolutionU}, we find  that for all $\phi \in \bH^{3}$ and $(s,\theta,t)\in \Delta_T^{(2)},$
$$
\delta u_{s \theta t}^{P, \natural }( \phi) = \delta u_{s \theta} (A_{\theta t}^{P,2,*} \phi) + u^\sharp_{s\theta}(A^{P,1,*}_{\theta t} \phi),
$$
where $u^\sharp_{s\theta}$ is defined in \eqref{eq:defin of sharp}.
We decompose $\delta u^{P, \natural }_{s \theta t} (\phi)$  into a smooth (in space) and non-smooth part using the smoothing operator $J^{\eta}$ to get $$u^{P, \natural }_{s \theta t} (\phi)  = \delta u^{P, \natural }_{s \theta t} (J^{\eta} \phi) + \delta u^{P, \natural }_{s \theta t} ((I - J^{\eta} )\phi),$$ for some $\eta \in (0,1]$ that will be specified later. We will now proceed  to analyze term-by-term. 
To estimate the non-smooth part, we use \eqref{smoothingOperator} and that $u^{\natural}_{s\theta}= \delta u_{s \theta}-A^{P,1}_{s\theta}u_s$ to obtain
\begin{align*}
\left|\delta u^{P, \natural }_{s \theta t} ((I - J^{\eta} )\phi)\right| & \le    |u|_{L^{\infty}_T\bH^0}\left( \left|A_{\theta t}^{P,1*} ((I-J^{\eta}) \phi)\right|_0  +  \left|A_{s \theta}^{P,1,*} A_{\theta t}^{P,1*}( (I-J^{\eta}) \phi)\right|_0  + \left|A^{2*}_{\theta t} ((I - J^{\eta})\phi)\right|_0\right) \\
& \lesssim |u|_{L^{\infty}_T\bH^0} \left(\omega_A(s,t)^{\frac{1}{p}} |(I-J^{\eta}) \phi|_1 + \omega_A(s,t)^{\frac{2}{p}} |(I-J^{\eta}) \phi|_2 \right)\\
& \lesssim |u|_{L^{\infty}_T\bH^0} \left( \omega_A(s,t)^{\frac{1}{p}} \eta^2 + \omega_A(s,t)^{\frac{2}{p}} \eta \right) |\phi|_3.
\end{align*}
In order to estimate  the smooth part, we use  the form $u^\sharp_{st}=  \delta \mu_{s \theta } + A_{s \theta}^{P,2} u_s + u_{s \theta}^{P, \natural }$ to get
\begin{align*}
\delta u_{s \theta t}^{P, \natural } (J^{\eta} \phi) & =  \delta \mu_{s \theta}  (A^{P,1,*}_{\theta t} J^{\eta} \phi) + u_s( A^{P,2,*}_{s \theta} A_{\theta t}^{P,1,*} J^{\eta} \phi) + u_{s \theta}^{P, \natural }(A_{\theta t}^{P,1,*} J^{\eta} \phi) \\
& \quad+ \delta \mu_{s \theta}  (A^{P,2,*}_{\theta t} J^{\eta} \phi) + u_s( A^{P,1,*}_{s \theta} A_{\theta t}^{P,2,*} J^{\eta} \phi)+ u_s( A^{P,2,*}_{s \theta} A_{\theta t}^{P,2,*} J^{\eta} \phi) + u_{s \theta}^{P, \natural }(A_{\theta t}^{P,2,*} J^{\eta} \phi) ,
\end{align*}
Estimating each term and using  \eqref{smoothingOperator}, for all $(s,\theta,t)\in \Delta_T^{(2)}$ such that $\varpi(s,t)\le L$,  we find
\begin{align}
|\delta u^{P, \natural }_{s \theta t} (J^{\eta} \phi ) | & \lesssim \omega_{\mu}(s,t) \omega_A(s,t)^{\frac{1}{p}} |J^{\eta} \phi |_2 + |u|_{L_T^{\infty}\bH^0} \omega_A(s,t)^{\frac{3}{p}} |J^{\eta} \phi|_3 + \omega_{P, \natural }(s,t)^{\frac{3}{p}} \omega_A(s,t)^{\frac{1}{p}} |J^{\eta} \phi|_4\notag \\
&\quad + \omega_{\mu}(s,t) \omega_A(s,t)^{\frac{2}{p}} |J^{\eta} \phi |_3 + |u|_{L_T^{\infty}\bH^0} \omega_A(s,t)^{\frac{3}{p}} |J^{\eta}\phi|_3 + |u|_{L_T^{\infty}\bH^0} \omega_A(s,t)^{\frac{4}{p}} |J^{\eta}\phi|_4 \notag \\
&\quad + \omega_{P, \natural }(s,t)^{\frac{3}{p}} \omega_A(s,t)^{\frac{2}{p}} |J^{\eta} \phi|_5 \notag \\
& \lesssim  \left(\omega_{\mu}(s,t) \omega_A(s,t)^{\frac{1}{p}} + |u|_{L_T^{\infty}\bH^0} \omega_A(s,t)^{\frac{3}{p}} + \omega_{P, \natural }(s,t)^{\frac{3}{p}} \omega_A(s,t)^{\frac{1}{p}} \eta^{-1}  \right.\notag  \\
&\quad \left. + \omega_{\mu}(s,t) \omega_A(s,t)^{\frac{2}{p}}+ |u|_{L_T^{\infty}\bH^0} \omega_A(s,t)^{\frac{3}{p}} + |u|_{L_T^{\infty}\bH^0} \omega_A(s,t)^{\frac{4}{p}}  \eta^{-1} \right.\notag \\
&\quad +\left.  \omega_{P, \natural }(s,t)^{\frac{3}{p}} \omega_A(s,t)^{\frac{2}{p}} \eta^{-2} \right) |\phi|_3  . \label{ineq:remainderestlastline}
\end{align}
Setting $\eta = \omega_A(s,t)^{\frac{1}{p}} \lambda $ for some constant $\lambda > 0$ to be determined later,  we have 
\begin{align*}
|\delta u^{P, \natural }_{s \theta t} |_{-3} & \lesssim   |u|_{L^{\infty}_T\bH^0}  \omega_A(s,t)^{\frac{3}{p}} ( \lambda^{-1} + 1 + \lambda + \lambda^2) + \omega_{\mu}(s,t) \omega_A(s,t)^{\frac{1}{p}} \\
&\quad + \omega_{\mu}(s,t) \omega_A(s,t)^{\frac{2}{p}}  +  \omega_{P, \natural }(s,t)^{\frac{3}{p}} (\lambda^{-1}+ \lambda^{-2}) \\
& \lesssim_{p}  \left( |u|^{\frac{p}{3}}_{L^{\infty}_T\bH^0}  \omega_A(s,t) ( \lambda^{-1} + 1 + \lambda + \lambda^2)^{\frac{p}{3}} + \omega_{\mu}(s,t)^{\frac{p}{3}} \omega_A(s,t)^{\frac{1}{3}}  \right.\\
& \quad+  \left. \omega_{\mu}(s,t)^{\frac{p}{3}} \omega_A(s,t)^{\frac{2}{3}}  +  \omega_{P, \natural }(s,t)(\lambda^{-1}+ \lambda^{-2})^{\frac{p}{3}} \right)^{\frac{3}{p}} .
\end{align*}
Applying Lemma \ref{sewingLemma}, we get
\begin{align*}
| u^{P, \natural }_{s t} |_{-3}^{\frac{p}{3}}&\lesssim_{p} |u|^{\frac{p}{3}}_{L^{\infty}_T\bH^0}  \omega_A(s,t) ( \lambda^{-1} + 1 + \lambda + \lambda^2)^{\frac{p}{3}} + \omega_{\mu}(s,t)^{\frac{p}{3}} \omega_A(s,t)^{\frac{1}{3}}  .\\
&\quad \quad +  \omega_{\mu}(s,t)^{\frac{p}{3}} \omega_A(s,t)^{\frac{2}{3}}  +  \omega_{P, \natural }(s,t)(\lambda^{-1}+ \lambda^{-2})^{\frac{p}{3}}.
\end{align*}
Since $\omega_{P, \natural}=|u^{P, \natural}|^{\frac{p}{3}}_{\frac{p}{3} -var; [s,t];\bH^{-3}}$ is equal to the infimum over all controls satisfying $|u_{st}^{P, \natural }|_{-3} \leq \omega_{P, \natural }(s,t)^{\frac{3}{p}}$ (see  \eqref{smoothingOperator}), there is a constant $C=C(p,d)$ such that 
\begin{align*}
\omega_{P, \natural }(s, t) &\le C  \left( |u|^{\frac{p}{3}}_{L^{\infty}_T\bH^0}  \omega_A(s,t) ( \lambda^{-1} + 1 + \lambda + \lambda^2)^{\frac{p}{3}} + \omega_{\mu}(s,t)^{\frac{p}{3}} \omega_A(s,t)^{\frac{1}{3}}  \right.\\
&\quad \quad\quad +  \left. \omega_{\mu}(s,t)^{\frac{p}{3}} \omega_A(s,t)^{\frac{2}{3}}  +  \omega_{P, \natural }(s,t)(\lambda^{-1}+ \lambda^{-2})^{\frac{p}{3}} \right).
\end{align*}
Choosing $\lambda$ such that $C(\lambda^{-1}+ \lambda^{-2})^{\frac{p}{3}} \leq \frac{1}{2}$ and $\tilde{L}>0$ such that $\eta=\omega_A(s,t)^{\frac{1}{p}} \lambda \leq \tilde{L}\lambda\le  1$, we obtain \eqref{RemainderEstimateMu}.

The proof of \eqref{RemainderEstimateWithoutMu} replaces the bound $ \delta u_{st}(\phi)\lesssim \omega_{\mu}(s,t)|\phi|_1$ with the bound
\begin{align}\label{ineq:othercontrol}
| \delta \mu_{st}( \phi) | & \leq \int_s^t \left(\nu |(u_r, \Delta \phi) | + |B_P(u_r)(\phi) | \right)dr  \lesssim \int_s^t \left(|u_r|_0 |\phi|_2 + |u_r|_0^2 |\phi|_{3 - \epsilon}\right) dr \\
& \lesssim (t-s) ( 1 + |u|_{L^{\infty}_T\bH^0})^2 |\phi|_{3 - \epsilon},
\end{align}
where we have used the antisymmetric property of $B_P$ and  \eqref{trilinear form estimate} with $\alpha_1 = \alpha_3 = 0$ and $\alpha_2 =3 - \epsilon$ for any $\epsilon < \frac12$. We note that this is only possible when $d \leq 3$. 
The rest of the proof is similar to the proof of \eqref{RemainderEstimateMu}. Indeed, in \eqref{ineq:remainderestlastline}, the term $\omega_{\mu}(s,t) \omega_A(s,t)^{\frac{1}{p}}$ is replaced with $ ( 1 + |u|_{L^{\infty}_T\bH^0})^2(t-s) \omega_A(s,t)^{\frac{1}{p}}\eta^{-1 + \epsilon}$ and  the term $\omega_{\mu}(s,t) \omega_A(s,t)^{\frac{2}{p}}$ is replaced with $(1 + |u|_{L^{\infty}_T\bH^0})^2(t-s) \omega_A(s,t)^{\frac{2}{p}}\eta^{-2 + \epsilon}$. Moreover, we still take $\eta = \omega_A(s,t)^{\frac{1}{p}} \lambda $ and for simplicity let $\epsilon = \frac14$.
\end{proof}
\begin{remark}
We use the estimate \eqref{RemainderEstimateWithoutMu} in the proof of existence, since it is allows us to obtain a bound independent of the Galerkin approximation.
\end{remark}
\begin{lemma} \label{AprioriVariation}
Assume that $(u,\pi)$ is a solution to  \eqref{NSDiffForm}. Then $u\in C^{p-\textnormal{var}}([0,T];\bH^{-1})$ and  there is a constant $\tilde{L}>0$, depending only on $p$ and $d$,   such that for all $(s,t)\in \Delta_T$ with $\varpi(s,t)\le L$, $\omega_{A}(s,t)\leq \tilde{L}$, and $\omega_{P,\natural}(s,t)\leq \tilde{L}$, it holds that
$$
\omega_u(s,t) \lesssim_{p} (1 + |u|_{L^{\infty}_T\bH^0})^{p} (\omega_{P, \natural }(s,t) + \omega_{\mu}(s,t)^{p} + \omega_A(s,t)), 
$$
where $\omega_u (s,t) := |u|^p_{p - \textnormal{var}; [s,t];\bH^{-1}}$.
\end{lemma}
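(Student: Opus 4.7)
The plan is to bound $\|\delta u_{st}\|_{-1}^p$ pointwise by a control, from which $\omega_u(s,t)$ follows via the equivalent definition of the $p$-variation seminorm in \eqref{equivdefpvar}, then to use Remark \ref{rem:oneindexpathslocal} to upgrade the local estimate to a global $p$-variation statement. The starting point is the identity \eqref{RNS2},
$$
\delta u_{st} = \delta\mu_{st} + A^{P,1}_{st} u_s + A^{P,2}_{st} u_s + u^{P,\natural}_{st},
$$
in which the first two summands naturally live in $\bH^{-1}$ with $\|\delta\mu_{st}\|_{-1} \lesssim \omega_\mu(s,t)$ and $\|A^{P,1}_{st} u_s\|_{-1} \lesssim |u|_{L^\infty_T\bH^0}\,\omega_A(s,t)^{1/p}$, whereas $A^{P,2}_{st} u_s$ only lies in $\bH^{-2}$ and $u^{P,\natural}_{st}$ only in $\bH^{-3}$, so their individual $\bH^{-1}$ norms need not be controlled. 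The crucial observation is that the identity itself forces the combination $R_{st} := A^{P,2}_{st} u_s + u^{P,\natural}_{st} = \delta u_{st} - \delta\mu_{st} - A^{P,1}_{st} u_s$ to belong to $\bH^{-1}$, and the argument exploits this hidden cancellation by playing the two available expressions for $R_{st}$ against each other through the smoothing operators $J^\eta$.

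To estimate $\|R_{st}\|_{-1}$, I test against an arbitrary $\phi \in \bH^1$ of unit norm and split $\phi = J^\eta\phi + (I - J^\eta)\phi$. On the smooth piece $J^\eta\phi \in \bH^\alpha$ for all $\alpha$, the two components of $R_{st}$ can be paired separately, and the unbounded rough driver bounds \eqref{ineq:UBRcontrolestimates} together with $|J^\eta\phi|_{1+\beta} \lesssim \eta^{-\beta}|\phi|_1$ from \eqref{smoothingOperator} produce a contribution of order $|u|\,\omega_A^{2/p}\eta^{-1} + \omega_{P,\natural}^{3/p}\eta^{-2}$. On the rough piece $(I - J^\eta)\phi \in \bH^1$, where the individual pairings are not defined, I switch to the alternative expression and bound the $\delta u_{st}$ term by $\|\delta u_{st}\|_0 \le 2|u|_{L^\infty_T\bH^0}$ combined with $|(I-J^\eta)\phi|_0 \lesssim \eta|\phi|_1$, and the remaining two by $|(I-J^\eta)\phi|_1 \le |\phi|_1$; this yields a contribution of order $|u|\eta + \omega_\mu + |u|\,\omega_A^{1/p}$. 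The balance $\eta := \omega_A(s,t)^{1/p} + \omega_{P,\natural}(s,t)^{1/p}$, which stays at most $1$ provided $\tilde L$ is chosen small enough in terms of $p$, ensures $\omega_A^{2/p}\eta^{-1} \le \omega_A^{1/p}$ and $\omega_{P,\natural}^{3/p}\eta^{-2} \le \omega_{P,\natural}^{1/p}$; combined with the direct bounds on $\delta\mu_{st}$ and $A^{P,1}_{st}u_s$, this delivers
$$
\|\delta u_{st}\|_{-1} \lesssim (1+|u|_{L^\infty_T\bH^0})\bigl(\omega_\mu(s,t) + \omega_A(s,t)^{1/p} + \omega_{P,\natural}(s,t)^{1/p}\bigr).
$$

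Raising to the $p$-th power and using $(a+b+c)^p \lesssim_p a^p + b^p + c^p$ gives the required pointwise bound on $\|\delta u_{st}\|_{-1}^p$. The RHS is, up to the prefactor $(1+|u|_{L^\infty_T\bH^0})^p$, a sum of the controls $\omega_\mu^p$ (superadditive thanks to $(a+b)^p \ge a^p + b^p$ for $p \ge 1$), $\omega_A$, and $\omega_{P,\natural}$, hence itself a control, so \eqref{equivdefpvar} gives the claimed bound on $\omega_u$ on every sub-interval meeting the size conditions on $\varpi$, $\omega_A$, and $\omega_{P,\natural}$; Remark \ref{rem:oneindexpathslocal} then upgrades this to $u \in C^{p-\textnormal{var}}([0,T];\bH^{-1})$. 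The main obstacle is precisely the spatial-regularity mismatch just described: neither $A^{P,2}_{st}u_s$ nor $u^{P,\natural}_{st}$ alone gives a sensible element of $\bH^{-1}$, and the proof hinges on recognizing that the solution identity forces their sum into $\bH^{-1}$ and on using both forms of that identity on complementary frequency ranges.
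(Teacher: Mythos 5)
Your proof is correct and follows essentially the same route as the paper: both arguments split the test function as $\phi = J^{\eta}\phi + (I-J^{\eta})\phi$, use the expansion \eqref{RNS2} together with the smoothing bounds \eqref{smoothingOperator} on the smooth part and the $L^{\infty}_T\bH^0$ bound on the rough part, and balance with the same choice $\eta = \omega_A(s,t)^{1/p} + \omega_{P,\natural}(s,t)^{1/p}$. The only difference is bookkeeping --- you peel off $\delta\mu_{st}$ and $A^{P,1}_{st}u_s$ before applying the frequency splitting to the remaining combination $A^{P,2}_{st}u_s + u^{P,\natural}_{st}$, whereas the paper splits all of $\delta u_{st}$ at once --- and the resulting term-by-term estimates coincide.
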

\begin{proof}
For  all $\eta \in (0,1]$, $(s,t)\in \Delta_T$ and $\phi \in \bH^1$, we have
$$
\delta u_{st} (\phi)  = \delta u_{st}(J^{\eta} \phi)+ \delta u_{st}((I - J^{\eta})\phi) .
$$
Applying \eqref{smoothingOperator}, we find
$$
|\delta u_{st}((I - J^{\eta})\phi)| \leq 2 |u|_{L^{\infty}_T\bH^0} |(I - J^{\eta})\phi|_0 \lesssim \eta |u|_{L^{\infty}_T\bH^0} |\phi|_1 .
$$
In order to estimate the smooth part, we expand $\delta u_{st}$ using \eqref{RNS2} and then apply \eqref{smoothingOperator} to get
\begin{align*}
|\delta u_{st}(J^{\eta} \phi)| & \leq |u_{st}^{P,\natural}  (J^{\eta} \phi) | + |\delta \mu_{st} (J^{\eta} \phi)| + |u_s (A^{P,1,*}_{st} J^{\eta} \phi) |+ |u_s (A^{P,2,*}_{st} J^{\eta} \phi) | \\
& \lesssim \omega_{P, \natural }(s,t)^{\frac{3}{p}} |J^{\eta} \phi |_{3} + \omega_{\mu}(s,t)  |J^{\eta} \phi|_1 + |u|_{L^{\infty}_T\bH^0} \omega_A(s,t)^{\frac{1}{p}} | J^{\eta} \phi |_{1} + |u|_{L^{\infty}_T\bH^0} \omega_A(s,t)^{\frac{2}{p}} | J^{\eta} \phi |_{2} \\
& \lesssim \left( \omega_{P, \natural }(s,t)^{\frac{3}{p}} \eta^{-2}  + \omega_{\mu}(s,t)  + |u|_{L^{\infty}_T\bH^0} \omega_A(s,t)^{\frac{1}{p}}  + |u|_{L^{\infty}_T\bH^0} \omega_A(s,t)^{\frac{2}{p}} \eta^{-1}  \right)  |\phi |_{1} ,
\end{align*}
for all $(s,t)\in \Delta_T$ such that $\varpi(s,t)\le L$.
Setting $\eta = \omega_{P, \natural }(s,t)^{\frac{1}{p}} + \omega_A(s,t)^{\frac{1}{p}}$ and choosing $\tilde{L}>0$ such that $\eta \in (0,1]$, we get
\begin{align*}
|\delta u_{st} |_{-1} & \lesssim_{p} (1+  |u|_{L^{\infty}_T\bH^0}) \left(  \omega_{P, \natural }(s,t) + \omega_{\mu}(s,t)^{p} +   \omega_A(s,t)\right)^{\frac{1}{p}},
\end{align*}
which proves the claim.
\end{proof}

The following lemma   shows that the solution $u$ is controlled by $A^{P,1}$. 

\begin{lemma} \label{AprioriVariation1}
Assume that $(u,\pi)$ is a solution of \eqref{NSDiffForm}. Then $u^{\sharp}\in C_2^{\frac{p}{2}-\textnormal{var}}([0,T];\bH^{-2})$ and  there is a constant $\tilde{L}>0$, depending only on $p$ and $d$,   such that for all $(s,t)\in \Delta_T$ with $\varpi(s,t)\le L$, $\omega_{A}(s,t)\leq \tilde{L}$, and $\omega_{P,\natural}(s,t)\leq \tilde{L}$, it holds that
$$
\omega_\sharp(s,t) \lesssim_{p}  (1 + |u|_{L^{\infty}_T\bH^0})^{\frac{p}{2}} (\omega_{P, \natural }(s,t) + \omega_{\mu}(s,t)^\frac{p}{2} + \omega_A(s,t)), 
$$
where $\omega_\sharp (s,t) := |u^\sharp|^\frac{p}{2}_{\frac{p}{2} - \textnormal{var}; [s,t];\bH^{-2}}$.
\end{lemma}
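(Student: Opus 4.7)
The plan is to mimic the strategy of Lemma \ref{AprioriVariation}, exploiting the two equivalent expressions for $u^\sharp$ coming from \eqref{eq:defin of sharp}, namely
\[
u^\sharp_{st} \;=\; \delta u_{st} - A^{P,1}_{st} u_s \;=\; \delta \mu_{st} + A^{P,2}_{st} u_s + u^{P,\natural}_{st},
\]
the first being more regular in space but less regular in time, and the second the reverse. We then interpolate between the two using the smoothing operators $J^\eta$ from \eqref{smoothingOperator}.

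Concretely, for $\phi \in \bH^2$ and $\eta \in (0,1]$, we write
\[
u^\sharp_{st}(\phi) = u^\sharp_{st}(J^\eta \phi) + u^\sharp_{st}((I - J^\eta)\phi).
\]
For the non-smooth part we use the first expression together with the bounds $|(I-J^\eta)\phi|_0 \lesssim \eta^2 |\phi|_2$ and $|(I-J^\eta)\phi|_1 \lesssim \eta |\phi|_2$, yielding
\[
|u^\sharp_{st}((I - J^\eta)\phi)| \;\lesssim\; |u|_{L^\infty_T \bH^0}\bigl(\eta^2 + \eta\,\omega_A(s,t)^{1/p}\bigr)|\phi|_2.
\]
For the smooth part we use the second expression, together with $|J^\eta \phi|_3 \lesssim \eta^{-1}|\phi|_2$ and $|J^\eta \phi|_k \leq |\phi|_2$ for $k\le 2$:
\[
|u^\sharp_{st}(J^\eta \phi)| \;\lesssim\; \Bigl(\omega_\mu(s,t) + |u|_{L^\infty_T \bH^0}\,\omega_A(s,t)^{2/p} + \omega_{P,\natural}(s,t)^{3/p}\eta^{-1}\Bigr)|\phi|_2.
\]

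The key step is then to choose $\eta$ to balance the $\eta^{-1}$-term in the smooth part against the $\eta$-terms in the non-smooth part. Setting $\eta := \omega_{P,\natural}(s,t)^{1/p} + \omega_A(s,t)^{1/p}$ (which lies in $(0,1]$ provided $\omega_A(s,t), \omega_{P,\natural}(s,t) \le \tilde{L}$ for a sufficiently small $\tilde L$), one has $\omega_{P,\natural}^{3/p}\eta^{-1} \le \omega_{P,\natural}^{2/p}$, while $\eta^2 \le \omega_{P,\natural}^{2/p}+\omega_A^{2/p}$ and $\eta\,\omega_A^{1/p} \le \omega_{P,\natural}^{1/p}\omega_A^{1/p} + \omega_A^{2/p}$. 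Putting these together and raising to the power $p/2$ (using $(a+b)^{p/2} \lesssim_p a^{p/2}+b^{p/2}$, and $a^{1/2}b^{1/2}\le a+b$ on the cross term) gives
\[
|u^\sharp_{st}|_{-2}^{p/2} \;\lesssim_{p}\; (1+|u|_{L^\infty_T\bH^0})^{p/2}\bigl(\omega_{P,\natural}(s,t) + \omega_\mu(s,t)^{p/2} + \omega_A(s,t)\bigr).
\]

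To conclude the $\frac{p}{2}$-variation bound, we note the right-hand side is a sum of controls (since $\omega_\mu$, $\omega_A$, $\omega_{P,\natural}$ are all controls), hence a control itself; taking the infimum over all such controls in the sense of \eqref{equivdefpvar} (restricted to the local variation space on $\{\varpi(s,t) \leq L\} \cap \{\omega_A(s,t) \leq \tilde L\} \cap \{\omega_{P,\natural}(s,t) \leq \tilde L\}$, and then patching over finitely many sub-intervals as in Remark \ref{rem:oneindexpathslocal}) yields the claimed bound on $\omega_\sharp$. The main obstacle is purely bookkeeping, namely the choice of $\eta$; once this is made the interpolation inequalities plug in cleanly.
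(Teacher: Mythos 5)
Your proposal is correct and follows essentially the same route as the paper's proof: the same two expressions for $u^\sharp$ from \eqref{eq:defin of sharp}, the same $J^\eta$ splitting with the first expression on the non-smooth part and the second on the smooth part, and the same choice $\eta = \omega_{P,\natural}(s,t)^{1/p} + \omega_A(s,t)^{1/p}$. The only difference is that you spell out the final balancing algebra in more detail than the paper does.
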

\begin{proof}
For all  $\eta \in (0,1]$, $(s,t)\in \Delta_T$ and $\phi \in \bH^2$, we have
$$
u^\sharp_{st} (\phi)  = u^\sharp_{st}(J^{\eta} \phi)+ u^\sharp_{st}((I - J^{\eta})\phi) .
$$
We recall that we have two formulas for $u^\sharp$:
$$
u^\sharp_{st}=\delta u_{s t}-A^{P,1}_{st}u_s=\delta\mu_{st}+A^{P,2}_{st}u_s+u^{P,\natural}_{st}.
$$
As explained above, we employ the first formula to estimate the non-smooth part and the second one to estimate  the smooth part. Applying \eqref{smoothingOperator}, we find
\begin{align*}
|u^\sharp_{st}((I - J^{\eta})\phi)|&\leq|\delta u_{s t}((I - J^{\eta})\phi)|+|u_s(A^{P,1,*}_{st}(I - J^{\eta})\phi)| \\
&\le |u|_{L^{\infty}_T\bH^0}|(I - J^{\eta})\phi|_0 +|u|_{L^{\infty}_T\bH^0}\omega_A(s,t)^\frac{1}{p} |(I - J^{\eta})\phi|_1 \\
&\lesssim \Big(\eta^2 |u|_{L^{\infty}_T\bH^0} +\eta |u|_{L^{\infty}_T\bH^0}\omega_A(s,t)^\frac{1}{p} \Big)|\phi|_2 .
\end{align*}
In order to estimate  the non-smooth part, we apply  \eqref{smoothingOperator} to obtain
\begin{align*}
|u^\sharp_{st}(J^{\eta} \phi)| & \leq |u_{st}^{P,\natural}  (J^{\eta} \phi) | + |\delta \mu_{st} (J^{\eta} \phi)| +  |u_s (A^{P,2,*}_{st} J^{\eta} \phi) | \\
& \lesssim\omega_{P, \natural }(s,t)^{\frac{3}{p}} |J^{\eta} \phi |_{3} + \omega_{\mu}(s,t)  |J^{\eta} \phi|_1 +  |u|_{L^{\infty}_T\bH^0} \omega_A(s,t)^{\frac{2}{p}} | J^{\eta} \phi |_{2} \\
& \leq \left( \omega_{P, \natural }(s,t)^{\frac{3}{p}} \eta^{-1}  + \omega_{\mu}(s,t)  + |u|_{L^{\infty}_T\bH^0} \omega_A(s,t)^{\frac{2}{p}}  \right)  |\phi |_{2} ,
\end{align*}
for all $(s,t)\in \Delta_T$ with $\varpi(s,t)\le L$.
Setting $\eta = \omega_{P, \natural }(s,t)^{\frac{1}{p}} + \omega_A(s,t)^{\frac{1}{p}}$ and choosing $\tilde{L}>0$ such that $\eta \in (0,1]$, we find
\begin{align*}
| u^\sharp_{st} |_{-2}  \lesssim_{p} (1+  |u|_{L^{\infty}_T\bH^0}) \left(  \omega_{P, \natural }(s,t) + \omega_{\mu}(s,t)^\frac{p}{2} +   \omega_A(s,t)\right)^{\frac{2}{p}},
\end{align*}
which proves the claim.
\end{proof}

We now derive estimates for $\omega_{Q,\natural}$. The computation in the proof of the lemma show why \eqref{quasiChen} is the correct Chen's relation for this system.

\begin{lemma} \label{PressureRemainderLemma}
Assume that $(u,\pi)$ solves  \eqref{NSDiffForm}. For $(s,t)\in \Delta_T$ such that  $\varpi(s,t)\le L$, let $\omega_{Q, \natural }(s,t) := |u^{Q, \natural}|^{\frac{p}{3}}_{\frac{p}{3} -var; [s,t];\bH_{\perp}^{-3}}.$	
Then  there is a constant $\tilde{L}>0$, depending only on $p$ and $d$,  such that  for all $(s,t)\in \Delta_T$ with  $\varpi(s,t)\le L$ and $\omega_{A}(s,t)\leq \tilde{L}$,	
\begin{equation} \label{PressureRemainderMu}
\omega_{Q, \natural}(s,t) \lesssim_{p}|u|_{L^{\infty}_T\bH^0}^\frac{p}{3} \omega_A(s,t) + \omega_{\mu}(s,t)^{\frac{p}{3}} \omega_A(s,t)^{\frac{1}{3}}  + \omega_{P, \natural}(s,t) + \omega_u(s,t)^{\frac{1}{3}} \omega_A(s,t)^{\frac{2}{3}}.
\end{equation}
\end{lemma}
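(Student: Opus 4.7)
The plan is to follow the architecture of Lemma \ref{Thm2.5}: compute $\delta u^{Q,\natural}$ from the defining identity \eqref{SystemSolutionPi}, estimate it using the a priori bounds on $u$ and $u^\sharp$, and then close with the sewing lemma. The essential structural twist is that the modified Chen relation \eqref{quasiChen} for $\mathbf{A}^Q$ (rather than the standard one) must supply the algebraic cancellation that makes the remainder sewable.

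First I would apply the second-order increment $\delta$ to \eqref{SystemSolutionPi}. The $dr$-integral and the term $\delta \pi$ both vanish, being one-index maps. Using $\delta A^{Q,1}_{s\theta t}=0$ together with $\delta A^{Q,2}_{s\theta t}=A^{Q,1}_{\theta t}A^{P,1}_{s\theta}$ from \eqref{quasiChen}, a direct computation gives, for every $\psi \in \bH_\perp^3$ and $(s,\theta,t) \in \Delta_T^{(2)}$,
$$
\delta u^{Q,\natural}_{s\theta t}(\psi) = \delta u_{s\theta}(A^{Q,1,*}_{\theta t}\psi) + \delta u_{s\theta}(A^{Q,2,*}_{\theta t}\psi) - u_s(A^{P,1,*}_{s\theta}A^{Q,1,*}_{\theta t}\psi).
$$
Substituting $\delta u_{s\theta}=u^\sharp_{s\theta}+A^{P,1}_{s\theta}u_s$ from \eqref{eq:defin of sharp} into the first summand, the piece $u_s(A^{P,1,*}_{s\theta}A^{Q,1,*}_{\theta t}\psi)$ cancels exactly with the third summand, and the identity collapses to
$$
\delta u^{Q,\natural}_{s\theta t}(\psi) = u^\sharp_{s\theta}(A^{Q,1,*}_{\theta t}\psi) + \delta u_{s\theta}(A^{Q,2,*}_{\theta t}\psi).
$$
This cancellation is precisely why \eqref{quasiChen} is the right structural requirement on the pair $(\mathbf{A}^P,\mathbf{A}^Q)$.

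Next I would bound each of the two surviving terms. Combining the a priori estimates $|u^\sharp_{s\theta}|_{-2}\le \omega_\sharp(s,t)^{2/p}$ from Lemma \ref{AprioriVariation1} and $|\delta u_{s\theta}|_{-1}\le \omega_u(s,t)^{1/p}$ from Lemma \ref{AprioriVariation} with the dual operator bounds $|A^{Q,1,*}_{\theta t}\psi|_2\lesssim \omega_A(s,t)^{1/p}|\psi|_3$ and $|A^{Q,2,*}_{\theta t}\psi|_1\lesssim \omega_A(s,t)^{2/p}|\psi|_3$ coming from \eqref{A1bound}--\eqref{A2bound}, I obtain
$$
|\delta u^{Q,\natural}_{s\theta t}|_{-3} \lesssim \omega_\sharp(s,t)^{2/p}\omega_A(s,t)^{1/p} + \omega_u(s,t)^{1/p}\omega_A(s,t)^{2/p}.
$$
The exponents on each summand sum to $3/p>1$ because $p<3$, so the sewing lemma applies directly --- no smoothing operator is needed, since $\psi$ is already at the top of the regularity scale. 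This yields
$$
\omega_{Q,\natural}(s,t) \lesssim_p \omega_\sharp(s,t)^{2/3}\omega_A(s,t)^{1/3} + \omega_u(s,t)^{1/3}\omega_A(s,t)^{2/3}.
$$
Plugging in the bound on $\omega_\sharp$ from Lemma \ref{AprioriVariation1} and simplifying via the Young-type inequality $a^{2/3}b^{1/3}\lesssim a+b$ then produces the stated form \eqref{PressureRemainderMu}.

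The only delicate step is the algebraic cancellation. Without \eqref{quasiChen}, the offending term $u_s(A^{P,1,*}_{s\theta}A^{Q,1,*}_{\theta t}\psi)$ would survive, contributing a bound of order $|u|_{L^\infty_T\bH^0}\omega_A(s,t)^{2/p}$ with $2/p\le 1$, which is too weak an exponent for sewing. The twisted relation \eqref{quasiChen} is engineered precisely so that this obstruction is traded for the more benign cross-term $u^\sharp_{s\theta}(A^{Q,1,*}_{\theta t}\psi)$, whose time regularity $\omega_\sharp(s,t)^{2/p}\omega_A(s,t)^{1/p}$ has total exponent $3/p>1$, restoring the integrability condition required by the sewing lemma.
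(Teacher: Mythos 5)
Your proposal is correct, and its algebraic core --- applying $\delta$ to \eqref{SystemSolutionPi}, using the twisted Chen relation \eqref{quasiChen} to cancel the cross term and collapse the increment to $u^{\sharp}_{s\theta}(A^{Q,1,*}_{\theta t}\psi)+\delta u_{s\theta}(A^{Q,2,*}_{\theta t}\psi)$, then sewing --- is exactly what the paper does. Where you diverge is in the treatment of the term $u^{\sharp}_{s\theta}(A^{Q,1,*}_{\theta t}\psi)$: the paper does not invoke Lemma \ref{AprioriVariation1}, but instead redoes the smoothing-operator interpolation from scratch, splitting $\psi=J^{\eta}\psi+(I-J^{\eta})\psi$ with $\eta=\omega_A(s,t)^{1/p}$ and using the representation $u^{\sharp}=\delta u-A^{P,1}u_s$ on the rough part and $u^{\sharp}=\delta\mu+A^{P,2}u_s+u^{P,\natural}$ on the smooth part. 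Your shortcut via the ready-made bound $|u^{\sharp}_{s\theta}|_{-2}\le\omega_{\sharp}(s,t)^{2/p}$ is legitimate (the pairing with $A^{Q,1,*}_{\theta t}\psi$ factors through $P$ since $u^{\sharp}_{s\theta}$ is a divergence-free distribution, so the $\bH^{-2}$ norm controls it) and is shorter, but it buys this at two small costs. First, Lemma \ref{AprioriVariation1} carries the extra hypothesis $\omega_{P,\natural}(s,t)\le\tilde{L}$, which the statement of the present lemma does not impose; the paper's from-scratch interpolation needs only $\omega_A(s,t)\le\tilde L$ for this term (though both proofs rely on Lemma \ref{AprioriVariation}, hence on that hypothesis, for the $\omega_u$ term). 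Second, after substituting the bound on $\omega_{\sharp}$ and applying Young's inequality you obtain $(1+|u|_{L^{\infty}_T\bH^0})^{p/3}\bigl(\omega_{P,\natural}+\omega_{\mu}^{p/3}\omega_A^{1/3}+\omega_A\bigr)+\omega_u^{1/3}\omega_A^{2/3}$, i.e.\ extra factors of $(1+|u|_{L^{\infty}_T\bH^0})^{p/3}$ multiplying the $\omega_{P,\natural}$ and $\omega_{\mu}^{p/3}\omega_A^{1/3}$ terms, so your final estimate is marginally weaker than the literal form \eqref{PressureRemainderMu}; this is harmless for every use made of the lemma, but you should not claim it "produces the stated form" without flagging the discrepancy in the constants.
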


\begin{proof}
Applying $\delta$ to \eqref{SystemSolutionU}, we find  that for all $\psi \in \bH_{\perp}^{3}$ and $(s,\theta,t)\in \Delta_T^{(2)},$
\begin{align*}
\delta u_{s \theta t}^{Q, \natural}( \psi) & = u_{s  t}^{Q, \natural}( \psi) - u_{s \theta }^{Q, \natural}( \psi) - u_{ \theta t}^{Q, \natural}( \psi) \\
& = \delta u_{s \theta} (A_{\theta t}^{Q,1, *} \psi) + \delta u_{s \theta } (A_{\theta t}^{Q,2, *} \psi) - u_s( A_{\theta t}^{P,1, *} A_{\theta t}^{Q,1, *} \psi) \\
& =u_{s\theta}^{\sharp}( A_{\theta t}^{Q,1,*} \psi) +  \delta u_{s \theta } (A_{\theta t}^{Q,2, *} \psi),
\end{align*}
where we have used \eqref{quasiChen} in the second equality. Using Lemma \ref{AprioriVariation}, it is easy to see that  the last term  satisfies \eqref{PressureRemainderMu}
, so we focus on the first term.

As usual, we split the equality into smooth and non-smooth parts $\psi = J^{\eta} \psi + (I- J^{\eta}) \psi$ for $\eta\in (0,1]$ to be determined later. In order to estimate the non-smooth part, we use  $u^{\natural}_{s\theta}= \delta u_{s \theta}-A^{P,1}_{s\theta}u_s$  and \eqref{smoothingOperator} to obtain
\begin{align*}
( \delta u_{s \theta} - A^{P,1}_{s \theta} u_s)((I- J^{\eta}) \psi) & =   \delta u_{s \theta}(A_{\theta t}^{Q,1,*}(I- J^{\eta})\psi) -  u_s( A^{P,1,*}_{s \theta}A_{\theta t}^{Q,1,*}(I- J^{\eta}) \psi) \\
& \leq 2|u|_{L^{\infty}_T\bH^0} \omega_A(s,t)^{\frac{1}{p}}| (I- J^{\eta})\psi|_1 + |u|_{L^{\infty}_T\bH^0} \omega_A(s,t)^{\frac{2}{p}}| (I- J^{\eta})\psi|_2  \\
& \lesssim |u|_{L^{\infty}_T\bH^0} (\omega_A(s,t)^{\frac{1}{p}} \eta^2 +   \omega_A(s,t)^{\frac{2}{p}} \eta) |\psi|_3.
\end{align*}
To estimate  the smooth part, we write  $u^\sharp_{st}=  \delta \mu_{s \theta } + A_{s \theta}^{P,2} u_s + u_{s \theta}^{P, \natural }$ and apply  \eqref{smoothingOperator} to get
\begin{align*}
( \delta u_{s \theta} - A^{P,1}_{s \theta} u_s)(J^{\eta} \psi) & =  \delta \mu_{s \theta}( A_{\theta t}^{Q,1,*} J^{\eta} \psi) + u_s(A_{s \theta }^{P,2,*} A_{\theta t}^{Q,1,*} J^{\eta} \psi) + u^{P,\natural}_{s \theta} ( A_{\theta t}^{Q,1,*} J^{\eta} \psi)  \\
& \lesssim \omega_{\mu}(s,t) \omega_A(s,t)^{\frac{1}{p}} |J^{\eta} \psi|_2 + |u|_{L^{\infty}_T\bH^0} \omega_A(s,t)^{\frac{3}{p}}| J^{\eta}\psi|_3 \\&\quad + \omega_{P, \natural}(s,t)^{\frac{3}{p}} \omega_A(s,t)^{\frac{1}{p}} | J^{\eta}\psi|_4 \\
& \lesssim (\omega_{\mu}(s,t) \omega_A(s,t)^{\frac{1}{p}}  + |u|_{L^{\infty}_T\bH^0} \omega_A(s,t)^{\frac{3}{p}} + \omega_{P, \natural}(s,t)^{\frac{3}{p}} \omega_A(s,t)^{\frac{1}{p}} \eta^{-1}) |\psi|_3  .
\end{align*}
Setting $\eta = \omega_A(s,t)^{\frac{1}{p}}$ and choosing $\tilde{L}$ such that $\eta \in (0,1]$, we get
\begin{align*}
|\delta u^{Q, \natural}_{s \theta t}|_{-3} \lesssim\left(|u|_{L^{\infty}_T\bH^0} \omega_A(s,t)^{\frac{3}{p}} + \omega_{\mu}(s,t) \omega_A(s,t)^{\frac{1}{p}}  + \omega_{P, \natural}(s,t)^{\frac{3}{p}} + \omega_u(s,t)^{\frac{1}{p}} \omega_A(s,t)^{\frac{2}{p}}\right)\\
\lesssim_{p}\left(|u|_{L^{\infty}_T\bH^0}^\frac{p}{3} \omega_A(s,t) + \omega_{\mu}(s,t)^{\frac{p}{3}} \omega_A(s,t)^{\frac{1}{3}}  + \omega_{P, \natural}(s,t) + \omega_u(s,t)^{\frac{1}{3}} \omega_A(s,t)^{\frac{2}{3}}\right)^{\frac{3}{p}}.
\end{align*}
Using Lemma \ref{sewingLemma}, we obtain the first inequality. The proof of the second inequality is similar to the first; see the end of the proof of Lemma  \ref{Thm2.5}.
\end{proof}

By virtue of Lemma \ref{PressureRemainderLemma} and \eqref{SystemSolutionPi}, we see immediately that $\pi \in C^{p - var}([0,T]; \bH_{\perp}^{-3})$, although we conjecture that there is  better spatial regularity.

\section{Proof of the main results}
\label{main}

\subsection{Existence, proof of Theorem \ref{existenceThmNoGalerkin}}
\label{Section:Galerkin}

\subsubsection{Galerkin approximation}
\label{s:galerkin}

We prove the existence of a solution using a Galerkin approximation.
Let $\{h_n\}_{n=1}^{\infty}$ be the eigenfunctions of the Stokes operator with corresponding eigenvalues $\{\lambda_n\}_{n=1}^{\infty}$. As discussed in Section \ref{ss:notation}, the collection $\{h_n\}_{n=1}^{\infty}$ is an orthonormal basis of $\bH^0$ and an orthogonal basis of $\bH^1$.
For a given $N\in \mathbf{N}$, let
$\bH_N=\operatorname{span}(\{h_1,\ldots,h_N\})$ and  $P_N: \bH^{-1}\rightarrow \bH_N$ be defined by 
$$P_N\varv:=\sum_{n=1}^N(\varv,h_n) h_n, \quad \varv\in \bH^{-1}.$$
Since $\mathbf{Z}\in C_g^{p-\textnormal{var}}([0,T];\bR^K)$ is a geometric rough path, there is a sequence  of $\bR^K$-valued smooth paths  $\{z^{N}\}_{N=1}^{\infty}$ such that their canonical lifts $\bZ^N=(Z^N,\mathbb{Z}^N)$ converge to $\bZ$ in the rough path topology.  We assume that
\begin{equation} \label{uniformRPBounds}
|Z_{st}^{N} | \lesssim \omega_Z(s,t)^{\frac{1}{p}},\quad |\mathbb{Z}^{N}_{st}| \lesssim \omega_Z(s,t)^{\frac{2}{p}}, \quad \forall (s,t)\in \Delta_T.
\end{equation}
For convenience, let us denote by $N_0$ a constant that bounds  $\sigma=(\sigma_1,\ldots,\sigma_K)$ and its derivatives up to order two.

We consider the following $N$-th order Galerkin approximations of \eqref{NSDiffForm}:
\begin{equation}\label{eq:Smooth NS}
\partial_t u^N + P_NB_P(u^N) =  \nu P_N \Delta u^N+  \sum_{k=1}^{K} P_N P[(\sigma_k\cdot \nabla) u^N]  \dot{z}^{N,k}_t,
\end{equation}
where $u^N(0)=P_N u_0$. If we assume that 
$$
u^N(t,x)=\sum_{i=1}^Nc^N_i(t)h_i(x),
$$
then plugging in this expansion for $u^N(t,x)$ into \eqref{eq:Smooth NS} and  testing against $h_i$,  we derive an ODE for the coefficients $(c_i^N)_{i=1}^N$: 
\begin{equation}\label{eq:ODE system}
\dot{c}^N_i(t)+\sum_{j,l=1}^NB_{j,l,i}c^N_j(t)c^N_l(t)=\nu\lambda_ic^N_i(t)+\sum_{k=1}^K\sum_{j=1}^N  A_{k,j,i}c^N_j(t)\dot{z}^{N,k}_t ,
\end{equation}
where  $B_{j,l,i}:=P_NB_P(h_j,h_l)(h_i)$ and $A_{k,j,i}:=((\sigma_k\cdot \nabla )h_j)(h_i)$. Owing to   \eqref{trilinear form estimate} with $\alpha_1=1,\alpha_2=0$, and $\alpha_3=1$, for all $j,l$ and $i$,  we have  $$|B_{j,l,i}|\le |h_j|_1|h_l|_1|h_i|_1.$$ Moreover,   for all $k,j,$ and $i$, $$ |A_{k,j,i}|\le |\sigma_k|_{\bL^{\infty}}|h_j|_1 |h_i|_0.$$ Thus, \eqref{eq:ODE system} has locally Lipschitz coefficients,  and so there exists a unique solution $(c_i)_{i=1}^N$ of \eqref{eq:ODE system} on a time interval $[0,T_N)$, for some $T_N>0$.  Therefore, $
u^N_t(x)=\sum_{i=1}^Nc^N_i(t)h_i(x)$   is a solution of \eqref{eq:Smooth NS} on the time  interval $[0,T_N)$. 

To get a global solution, we derive a global energy estimate of $u^N$. Testing \eqref{eq:Smooth NS} against $u^N$  and using \eqref{eq:B prop}, the divergence theorem, and that  $\nabla \cdot\sigma_k=0$, for all $k\in \{1,\ldots,K\}$, we get 
\begin{align*}
|u^N_t|_0^2 +  2 \nu\int_0^t |\nabla u^N_s|_0^2\, ds& =  |P_N u_0|_0^2 - 2 \int_0^t P_N B_P(u^N_s,u^N_s,u^N_s)\,ds  + \sum_{k=1}^K \int_0^t ((\sigma_k \cdot\nabla) u^N_s, u^N_s) \dot{z}_s^{N,k}ds  \\
&= |P_Nu_0|_0^2 \leq |u_0|_0^2, \quad \forall t\in [0,T_N).
\end{align*}
It follows that the $\bL^2$-norm of $u^N$ is non-increasing in time, and hence that $(c_i)_{i=1}^N$ does not blow-up in finite time. Therefore,  $u^N\in C_T\bH^0\cap L^2_T\bH^1$ solves \eqref{eq:Smooth NS}.

Integrating \eqref{eq:Smooth NS}  over $[s,t]$, and then iterating  the equation into the integral against $\dot{z}^{N}$ as we did in \eqref{eq:iterationP} , we find
\begin{equation} \label{eq:URDGallerkinApprox}
\delta u^N_{st} =  \int_s^t \left( \nu P_N \Delta  u^N_r - P_N B_P(u_r^N) \right) \, dr +  A_{st}^{N,1} u_s^{N} +  A_{st}^{N,2} u_s^{N} + u_{st}^{N, \natural} ,
\end{equation}
where    $\tilde{P}_N := P_N P$,
$$
A^{N,1}_{st} \phi :=    \tilde{P}_N  \left[(\sigma_k \cdot \nabla) \phi \right]  Z_{st}^{N,k},
$$
$$
A^{N,2}_{st} \phi  :=   \tilde{P}_N \left[ (\sigma_k \cdot  \nabla)  \tilde{P}_N [ (\sigma_j \cdot  \nabla)  \phi ] \right] \mathbb{Z}_{st}^{N, j,k},
$$
$\mu_t^N : =  P_N \int_0^t \left( \nu \Delta u_r^N  -  B_P(u_r^N) \right) \, dr $, and
\begin{align*}
u_{st}^{N, \natural} & :=   \int_s^t \tilde{P}_N  \left[ (\sigma_k \cdot \nabla) \delta \mu_{sr}^N \right]  \dot{z}_r^{N,k} \, dr + \tilde{P}_N \int_s^t \int_s^r    (\sigma_k \cdot \nabla) \tilde{P}_N\left[ (\sigma_i \cdot \nabla) \delta \mu^N_{s r_{1}} \right] \dot{z}_{r_1}^{N,i} \dot{z}_r^{N,k} \, dr_1 \, dr \\
&\quad +   \int_s^t \int_s^r \int_s^{r_1}\tilde{P}_N\left[(\sigma_k\cdot  \nabla )\tilde{P}_N \left[ (\sigma_i \cdot  \nabla )\tilde{P}_N \left[( \sigma_j \cdot  \nabla) u_{r_2}^N \right]\right] \right] \dot{z}_{r_2}^{N,j} \dot{z}_{r_1}^{N,i} \dot{z}_r^{N,k} \, dr_2 \, dr_1 \, dr,
\end{align*}

Owing to   \eqref{A1bound},   \eqref{A2bound}, and \eqref{uniformRPBounds}, we have that $(A^{N,1}, A^{N,2} )$ is uniformly bounded in $N$ as a family of unbounded rough drivers on the scale $( \bH^{\alpha} )_{\alpha \in \bR_+}$. That is, there exists a control $\omega_{A^N}$ such that \eqref{ineq:UBRcontrolestimates} holds and  for all $(s,t)\in \Delta_T$,
$$
\omega_{A^N}(s,t)\lesssim_{N_0}  \omega_{Z}(s,t).
$$

It is straightforward to check that    $u^{N, \natural} \in C_2^{\frac{p}{3} -\textnormal{var}}([0,T]; H_N)$ by estimating term-by-term; one makes use of  \eqref{A1bound},  \eqref{A2bound},  \eqref{trilinear form estimate}, and that $u^N$ is smooth in space and $z^N$ is smooth in time. For all $(s,t)\in \Delta_T$, let $\omega_{N,\natural}(s,t) := |u^{N,\natural}|^{\frac{p}{3}}_{\frac{p}{3} -\textnormal{var};[s,t];\bH^{-3}}$.
Arguing as in Lemma \ref{Thm2.5}, we find that there is an $L>0$ such that for all $(s,t)\in \Delta_T$ with  $\omega_{Z}(s,t)\le L$, 
\begin{align}
\omega_{N, \natural}(s,t)  & \lesssim_p|u^N|^{\frac{p}{3}}_{L_T^{\infty} H_N} \omega_{A^N}(s,t)  + ( 1 +|u^N|_{L_T^{\infty} H_N})^{\frac{2p}{3}}  (t-s)^{\frac{p}{3}} \omega_{A^N}(s,t)^{\frac{1}{12}}  \notag \\
& \lesssim_{p,N_0} |u_0|_{0}^{\frac{p}{3}} \omega_{Z}(s,t)  +  ( 1 + |u_0|_{0})^{\frac{2p}{3}}(t-s)^{\frac{p}{3}} . \label{uniformRemainderBounds}
\end{align}

\begin{theorem} \label{existenceThm}
There exists a subsequence of $\{ u^N \}_{N=1}^{\infty}$ that converges  weakly in $L^2_T\bH^1$, weak-* in $L^{\infty}_T\bH^0$, and strongly in $L_T^{2}\bH^0 \cap C_T\bH^{-1}$  to a solution of \eqref{SystemSolutionU} that is weakly continuous in $\bH^0$.
\end{theorem}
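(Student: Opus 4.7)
The plan is to combine the uniform energy estimate with the uniform remainder bound \eqref{uniformRemainderBounds} and the a priori variation estimate of Lemma~\ref{AprioriVariation} to extract a convergent subsequence, and then pass to the limit in each term of \eqref{eq:URDGallerkinApprox}. First, the Galerkin energy identity already derived above yields
$$
\sup_{N}\left(\sup_{t\in[0,T]}|u^N_t|_0^2 + 2\nu\int_0^T|\nabla u^N_r|_0^2\,dr\right)\le |u_0|_0^2 .
$$
Running the proof of Lemma~\ref{AprioriVariation} verbatim on the Galerkin system (with the control $\omega_{A^N}\lesssim_{N_0}\omega_Z$ uniform in $N$, and $\omega_{N,\natural}$ controlled by \eqref{uniformRemainderBounds} in terms of data only) gives a uniform bound on $|u^N|_{p-\textnormal{var};[0,T];\bH^{-1}}$.

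By Banach--Alaoglu, after extracting a subsequence (not relabelled), $u^N\rightharpoonup u$ in $L^2_T\bH^1$ and $u^N\overset{*}{\rightharpoonup} u$ in $L^\infty_T\bH^0$. The compactness criterion Lemma~\ref{CompactnessLemma}, applied using the uniform $p$-variation bound in $\bH^{-1}$ and the uniform $L^\infty_T\bH^0$ bound together with the compact embedding $\bH^0\hookrightarrow\bH^{-1}$, produces a further subsequence converging strongly in $C_T\bH^{-1}$. Interpolating this with the uniform $L^2_T\bH^1$ bound yields strong convergence in $L^2_T\bH^0$, which is precisely what is needed to pass to the limit in the quadratic term: by the bilinear splitting $B_P(u^N)-B_P(u)=B_P(u^N-u,u^N)+B_P(u,u^N-u)$ and \eqref{trilinear form estimate}, $\int_s^t B_P(u^N_r)(\phi)\,dr\to\int_s^t B_P(u_r)(\phi)\,dr$ for every $\phi\in\bH^3$. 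The linear viscous term passes by weak convergence in $L^2_T\bH^1$.

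Next, the unbounded-rough-driver terms: since $(Z^N,\mathbb{Z}^N)\to(Z,\mathbb{Z})$ in the product $p$-variation topology and $\tilde P_N\to P$ strongly on each $\bH^\alpha$, one checks that $A^{N,i}_{st}\to A^{P,i}_{st}$ in $\mathcal{L}(\bH^\alpha,\bH^{\alpha-i})$ for the relevant $\alpha$, uniformly on $\Delta_T$, with the uniform bounds \eqref{ineq:UBRcontrolestimates} preserved in the limit. Combined with the strong convergence of $u^N$, this gives $u^N_s([A_{st}^{N,1,*}+A_{st}^{N,2,*}]\phi)\to u_s([A_{st}^{P,1,*}+A_{st}^{P,2,*}]\phi)$ for every $\phi\in\bH^3$ and $(s,t)\in\Delta_T$. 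We then \emph{define} $u^{P,\natural}$ by \eqref{SystemSolutionU}; the display then holds by construction, and the pointwise convergence $u^{N,\natural}_{st}(\phi)\to u^{P,\natural}_{st}(\phi)$ is automatic.

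The main obstacle, and the step to which the rest of the work is really devoted, is verifying that the limit remainder satisfies the local variation bound \eqref{SystemSolutionRemainder}. Here the uniform estimate \eqref{uniformRemainderBounds} is decisive: setting $\varpi(s,t):=\omega_Z(s,t)+(t-s)$, the right-hand side of \eqref{uniformRemainderBounds} is bounded by a single control $\tilde\omega(s,t):=C(|u_0|_0)\,\varpi(s,t)$ uniformly in $N$ on the regime $\varpi(s,t)\le L$ for some small $L>0$. Since the $\frac{p}{3}$-variation seminorm on a fixed partition is lower semi-continuous under pointwise convergence of two-index maps, the pointwise limit $u^{P,\natural}$ inherits $|u^{P,\natural}_{st}|_{-3}\le\tilde\omega(s,t)^{3/p}$ for $\varpi(s,t)\le L$, placing it in $C^{p/3-\textnormal{var}}_{2,\varpi,L}([0,T];\bH^{-3})$. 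Finally, weak continuity of $u$ in $\bH^0$ follows from $u\in L^\infty_T\bH^0\cap C_T\bH^{-1}$ by the standard density argument (test against $\phi\in\bH^3$, use the uniform $\bL^2$ bound and density of $\bH^3$ in $\bH^0$). This completes the proof.
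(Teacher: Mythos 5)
Your overall architecture coincides with the paper's: Banach--Alaoglu for the weak and weak-* limits, the compactness criterion of Lemma \ref{CompactnessLemma} for the strong convergence, term-by-term passage to the limit in \eqref{eq:URDGallerkinApprox}, defining $u^{P,\natural}$ through the equation, and transferring the uniform remainder bound \eqref{uniformRemainderBounds} to the limit. There is, however, a genuine gap in the compactness step. Lemma \ref{CompactnessLemma} does not take a uniform bound on $|u^N|_{p-\textnormal{var};[0,T];\bH^{-1}}$ as input: the set $X$ there is defined by $|\delta g_{st}|_{-1}\le\omega(s,t)^{\kappa}$ for a \emph{single, fixed} control $\omega$, so one must exhibit an $N$-independent modulus of continuity for the increments. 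Running Lemma \ref{AprioriVariation} verbatim on the Galerkin system gives $\omega_{u^N}(s,t)\lesssim_p(1+|u_0|_0)^p\big(\omega_{N,\natural}(s,t)+\omega_{\mu^N}(s,t)^p+\omega_{A^N}(s,t)\big)$; while $\omega_{N,\natural}$ and $\omega_{A^N}$ are dominated by data-dependent controls as you note, the drift control $\omega_{\mu^N}(s,t)=\int_s^t(1+|u^N_r|_1)^2\,dr$ is not: only its total mass $\omega_{\mu^N}(0,T)$ is uniformly bounded, and the $\bH^1$-mass of $u^N$ may concentrate on different small time intervals as $N$ varies, so no single control dominates all of the $\omega_{\mu^N}$. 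Equiboundedness of the total $p$-variation does not supply the Arzel\`a--Ascoli-type equicontinuity that the lemma encodes. The paper resolves exactly this point by replacing the drift estimate with \eqref{ineq:othercontrol}, i.e. $|\delta\mu^N_{st}(\phi)|\lesssim(t-s)(1+|u_0|_0)^2|\phi|_{3-\epsilon}$, trading spatial regularity of the test function for a bound by the $N$-independent control $(t-s)$; combined with \eqref{uniformRemainderBounds} this yields \eqref{uniformSolution}, namely $|\delta u^N_{st}|_{-1}\lesssim(1+|u_0|_0)^2\big(\omega_Z(s,t)^{1/p}+(t-s)^{1-2/p}\big)$, whose right-hand side depends only on the data (and, as the paper remarks, no longer gives $p$-variation of the limit). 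Your argument needs this substitution; with it, the remainder of your proof goes through.

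A minor imprecision: your claim that $A^{N,i}_{st}\to A^{P,i}_{st}$ in $\clL(\bH^{\alpha},\bH^{\alpha-i})$ uniformly on $\Delta_T$ is too strong, since $|I-P_N|_{\clL(\bH^0,\bH^0)}=1$ for every $N$; the projections converge only strongly. Fortunately, what your subsequent step actually uses is the convergence $A^{N,i,*}_{st}\phi\to A^{P,i,*}_{st}\phi$ in $\bH^0$ for the fixed test function $\phi\in\bH^3$, which does hold and is what the paper establishes. The remaining steps (passing the pointwise bound from \eqref{uniformRemainderBounds} to the limit remainder, and weak $\bH^0$-continuity via the density argument, cf. Lemma \ref{lem:weakc}) are consistent with the paper.
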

\begin{proof}
Since $\{u^N\}_{N=1}^{\infty}$ is uniformly bounded in $L^2_T\bH^1\cap L^{\infty}_T\bH^0$, an application of Banach-Alaoglu yields a subsequence, which we will relabel as $\{u^N\}_{n=1}^{\infty}$, that converges   weakly in $L^2_T\bH^1$ and weak-* in $L^{\infty}_T\bH^0$. To obtain a further subsequence that converges strongly in $L_T^{2}\bH^0 \cap C_T\bH^{-1}$, we need to apply  Lemma \ref{CompactnessLemma}; that is, we need to show there exists a controls $\omega$ and $\bar{\omega}$ and $L,\kappa>0$  independent of $N$ such that  $|\delta u_{st}^N|_{-1}\le \omega(s,t)$ for all $(s,t)\in \Delta_T$ with $\bar{\omega}(s,t)\le L$. The proof of this is similar to the proof of Lemma \ref{AprioriVariation},  except that we need a slightly different bound on the drift term. This bound,  in particular, does not yield $p$-variation of the solution.

Let $\phi \in \bH^1$. Decomposing $\delta u^N_{st}$ into a smooth and non-smooth part using $J^{\eta}$ for some $\eta\in (0,1]$, we get
\begin{align*}
|\delta u^N_{st}(\phi)| & \leq |\delta u^N_{st}( J^{\eta} \phi)| + |\delta u^N_{st}( (I - J^{\eta}) \phi)| \\
& \lesssim  \omega_{N, \natural}(s,t)^{\frac{3}{p}} |J^{\eta} \phi |_3 + (t-s)(1  + |u^N|_{L_T^{\infty}H_N})^2 |J^{\eta} \phi |_3 \\
& \quad+ |u^N|_{L_T^{\infty}H_N} (\omega_{A^N}(s,t)^{\frac{1}{p}} |\phi|_1 + \omega_{A^N}(s,t)^{\frac{2}{p}} |J^{\eta}\phi|_2 ) + |u^N|_{L_T^{\infty}H_N} |(I - J^{\eta}) \phi|_0 \\
& \lesssim  \eta^{-2}\omega_{N, \natural}(s,t)^{\frac{3}{p}} |\phi |_1 +\eta^{-2} (t-s)  (1  + |u_0|_0)^2 |\phi |_1 \\
&\quad + |u_0|_{0} (\omega_{Z}(s,t)^{\frac{1}{p}}  +  \eta^{-1}\omega_{Z}(s,t)^{\frac{2}{p}})  |\phi|_1  + \eta|u_0|_{0}  |\phi|_1 .
\end{align*}
Using \eqref{uniformRemainderBounds} together with $\eta = \omega_{Z}(s,t)^{\frac{1}{p}} + (t-s)^{\frac{1}{p}} $ and $L > 0 $ chosen such that $\eta \in (0,1]$, we find 
\begin{align}
|\delta u^N_{st}|_{-1} & \le (1+ |u_0|_0)^2  \left[ \big( \omega_{Z}(s,t)^{ \frac{3}{p}} +  (t-s)\omega_{Z}(s,t)^{\frac{1}{p}}  \big) \eta^{-2}  \right. \notag \\
&  \left. \quad+ (t-s) \eta^{-2}  + (\omega_{Z}(s,t)^{\frac{1}{p}}  + \omega_{Z}(s,t)^{\frac{2}{p}} \eta^{-1})   + \eta \right] \notag \\
& \lesssim_{N_0}(1+ |u_0|_0)^2 ( \omega_{Z}(s,t)^{ \frac{1}{p} } + (t-s)^{1 - \frac{2}{p}} )  \label{uniformSolution}  .
\end{align}
By Lemma \ref{CompactnessLemma}, there is a subsequence of  $\{u^{N}\}_{N=1}^{\infty}$ which we continue to denote by $\{u^{N}\}_{N=1}^{\infty}$ converging strongly to an element $u $ in $C_T\bH^{-1}\cap L^2_T\bH^0$. Furthermore, owing to Lemma \ref{lem:weakc}, we know that $u$ is continuous with values in $\bH^0_w$ (i.e., $ \bH^0$ equipped with the weak topology). 

Our goal now is to pass to the limit in \eqref{eq:URDGallerkinApprox} tested against some $\phi \in \bH^3$ as $N$ tends to infinity. Clearly, 
\begin{align*}
| (A_{st}^{N,1} -  A_{st}^{P,1})  \phi |_{0} &=       \left| P_N  P \left[\sigma_k \cdot \nabla) \phi\right]   Z_{st}^{N,k} -   P \left[(\sigma_k \cdot \nabla) \phi\right]   Z_{st}^{k}  \right|_{0} \\
& \leq | P_N  P \left[(\sigma_k \cdot \nabla) \phi\right]  ( Z_{st}^{N,k} -    Z_{st}^{k} ) |_{0}  +  | (I - P_N)  P \left[(\sigma_k \cdot \nabla) \phi\right]   Z_{st}^{k} |_{0}.
\end{align*}
Making use of \eqref{A1bound}, we get
\begin{gather*}
| P_NP(\sigma_k \cdot \nabla) \phi  |_{0} | Z_{st}^{N,k} -    Z_{st}^{k} | \lesssim_{N_0} | \phi  |_{1} | Z_{st}^{N} -    Z_{st}| ,\\
| (I - P_N)  P \left[(\sigma_k \cdot \nabla) \phi\right]   Z_{st}^{k} |_{0}\le |I-P_N|_{\clL(\bH^0,\bH^0)}\left|P \left[(\sigma_k \cdot \nabla) \phi\right]\right|_0 |Z_{st}|\lesssim_{N_0} |I-P_N|_{\clL(\bH^0,\bH^0)}|\phi|_1|Z_{st}|.
\end{gather*}
Moreover,  we have
\begin{align*}
| (A_{st}^{N,2} -  A_{st}^{P,2})  \phi |_{0} & \leq |  \tilde{P}_N \left[  (\sigma_k \cdot  \nabla ) \tilde{P}_N [ (\sigma_j \cdot  \nabla)  \phi ] \right] (\mathbb{Z}_{st}^{N, j,k} - \mathbb{Z}_{st}) |_{0} \\
&\quad +  | (I -  P_N) P \left[  (\sigma_k \cdot  \nabla)  P [ (\sigma_j \cdot  \nabla)  \phi ] \right] \mathbb{Z}_{st}^{j,k} |_{0} \\
& \quad + | \tilde{P}_N  \left[  (\sigma_k \cdot  \nabla )( I - P_N)  P [ (\sigma_j \cdot  \nabla)  \phi ] \right] \mathbb{Z}_{st}^{j,k} |_{0} .
\end{align*}
Now, applying \eqref{A2bound}, we find
\begin{gather*}
|  \tilde{P}_N \left[  (\sigma_k \cdot  \nabla ) \tilde{P}_N [ (\sigma_j \cdot  \nabla)  \phi ] \right] (\mathbb{Z}_{st}^{N, j,k} - \mathbb{Z}_{st}^{j,k}) |_{0}\lesssim_{N_0} |\phi|_2|\mathbb{Z}_{st}^{N, j,k} - \mathbb{Z}_{st}^{j,k}|,\\
| (I -  P_N) P \left[  (\sigma_k \cdot  \nabla)  P [ (\sigma_j \cdot  \nabla)  \phi ] \right] \mathbb{Z}_{st}^{j,k} |_{0}\lesssim_{N_0}  |I-P_N|_{\clL(\bH^0,\bH^0)}|\phi|_2|\mathbb{Z}_{st}^{j,k} |,\\
|   \tilde{P}_N(\sigma_k \cdot  \nabla) ( I - P_N)  P [ (\sigma_j \cdot  \nabla)  \phi ] ) \mathbb{Z}_{st}^{j,k} |_{0} 
\lesssim_{N_0}   |I-P_N|_{\clL(\bH^1,\bH^1)}|\phi|_2|\mathbb{Z}_{st}^{j,k}|.
\end{gather*}
Therefore, $$A_{st}^{N,i,*}\phi \rightarrow A_{st}^{P,i,*}\phi$$ in $\bH^{0}$ for $i\in \{1,2\}$ as $N \rightarrow \infty$, and hence  
\begin{align*}
|(u^N_s,A_{st}^{N,i,*}\phi) - (u_s,A_{st}^{P,i,*}\phi)|&\le_{N_0}  | (u^N_s-u_s,A_{st}^{N,i,*}\phi) - (u_s,(A_{st}^{P,i,*}-A_{st}^{N,i,*})\phi)|\\
&\lesssim_{N_0}  |u^N_s-u_s|_{-1}|\phi|_3+|u_s|_{0}|(A_{st}^{P,i,*}- A_{st}^{N,i,*})\phi|_{0}\rightarrow 0
\end{align*}
as $N \rightarrow \infty$.
Finally, using the  strong convergence in $L_T^2\bH^0$ of $\{u^N\}$ and \eqref{trilinear form estimate},  we find
\begin{align*}
\left|\int_s^t \left[B_P(u_r)(\phi) - B_P(u_r^N) (\phi) \right]\,dr \right|    &\leq \left| \int_s^t B_P(u_r - u_r^N,u_r)(\phi)\,dr \right| 
\quad + \left|\int_s^t  B_P(u_r^N,u_r- u_r^N) (\phi) \, dr \right| \\
&\lesssim  \int_s^t |u_r - u_r^N|_0 |u_r|_0 \, dr |\phi|_3    +  \int_s^t |u_r - u_r^N|_0 |u^N_r|_0 \, dr |\phi|_3 \rightarrow 0
\end{align*}
as $N \rightarrow \infty$.

Since all of the terms in equation \eqref{eq:URDGallerkinApprox} converge when applied to $\phi$,  the remainder $u^{N, \natural}_{st}(\phi)$ converges to some limit $u^{P, \natural}_{st}(\phi)$. Owing to the uniform bound \eqref{uniformRemainderBounds}, we have $ u^{P,\natural} \in C^{\frac{p}{3}-\textnormal{var}}_{2, \varpi,L}([0,T]; \bH^{-3})$ for some control $\varpi$ depending only on $\omega_Z$ and $L> 0$ depending only on $p$, which proves that $u$ is a solution of \eqref{SystemSolutionU}.
\end{proof}

\subsubsection{Pressure recovery}
\label{s:pressure}

To finalize the proof of existence, we need to prove that  the pressure term $\pi$ exists and satisfies \eqref{SystemSolutionPi}.
To this end,  we first show that we can construct the rough integral $$I_t=Q\int_0^t (\sigma_k \cdot\nabla) u_r \, d Z^k_r, \quad I_0=0,$$ using the sewing lemma, Lemma \ref{sewingLemma}. Let
$
h_{st} = A^{Q,1}_{st}u_s+ A^{Q,2}_{st}u_s
$
for $(s,t)\in \Delta_T$. It follows that   $h\in C_2^{p-\textnormal{var}}([0,T];\bH_{\perp}^{-2})$.
Applying the $\delta$ operator to $h$ and using \eqref{quasiChen}, for $(s,\theta,t)\in \Delta^{(2)}_T$, we have
\begin{align*}
\delta h_{s\theta t}&=  (\delta A^{Q,2}_{s\theta t})u_s - A^{Q,1}_{\theta t} \delta u_{s\theta}-A^{Q,2}_{\theta t}\delta u _{s\theta} \\
&=  A_{ \theta t}^{Q,1} A_{ s \theta }^{P,1}  u_s - A^{Q,1}_{\theta t} \delta u_{s\theta}-A^{Q,2}_{\theta t}\delta u _{s\theta} \\
&=-A^{Q,1}_{\theta t}u^\sharp_{s\theta}-A^{Q,2}_{\theta t} \delta u_{s\theta},
\end{align*}
where we recall that $u^{\sharp}_{st}=\delta u_{s t}-A^{P,1}_{st}u_s$ (see \eqref{eq:defin of sharp}). Owing to Lemmas \ref{AprioriVariation} and  \eqref{AprioriVariation1}, which establish the regularity of $\delta u$ and $u^\sharp$,  there are controls $\omega$  and $\varpi$ and an $L>0$  such that for all $(s,\theta,t)$ with $\varpi(s,t)\le L$, we have
$$
|\delta h_{s\theta t}|_{-3}\lesssim_p  \left(\omega_A(s,t)^{\frac{1}{3}}\omega_{\sharp}(s,t)^{\frac{2}{3}}+\omega_A(s,t)^{\frac{2}{3}}\omega_{u}(s,t)^{\frac{1}{3}}\right)^{\frac{3}{p}}=:\omega(s,t)^{\frac{3}{p}}.
$$
Therefore, by Lemma \ref{sewingLemma},  there exists a unique path $I\in C^{p-\textnormal{var}}([0,T];\bH^{-3}_{\perp})$ and two-index map $ I^{\natural}\in  C_{2,\varpi,L}^{p-\textnormal{var}}([0,T];\bH_{\perp}^{-3})$ such that 
$$
\delta I_{st}= A^{Q,1}_{st}u_s+ A^{Q,2}_{st}u_s + I^\natural_{st} . 
$$
and
$$| I^\natural_{s\theta t}|_{-3}\lesssim_p \omega(s,t)^\frac{3}{p}.$$  

We define
$$
\pi_t :=  -  \int_0^t B_Q(u_r) \, dr   + I_t ,
$$
or alternatively using the local approximation
$$
\delta \pi_{st} =  -  \int_s^t B_Q(u_r)\,dr   + A^{Q,1}_{st}u_s+ A^{Q,2}_{st}u_s  + u^{Q,\natural}_{st},
$$
where $u^{Q,\natural}_{st}:=I^\natural_{st}$. 
Owing to Lemma \ref{PressureRemainderLemma} and \eqref{SystemSolutionPi}, we have that $\pi \in C^{p - var}([0,T]; \bH_{\perp}^{-3})$.

\subsection{Uniqueness in two spatial dimensions, proof of Theorem \ref{contractiveTheoremNoContraction}}
\label{s:uniq}

The objective of this section is to prove that the solution $(u,\pi)$ of  \eqref{NSDiffForm} is unique  when $d=2$ and  $\sigma_k$ is constant function of $x\in \bT^d$ for all $k\in \{1,\dots, K\}$.
Assume for a moment that all functions are smooth  and that we have two solutions of \eqref{NSDiffForm}:
$$
\partial_t u^i_t = \nu \Delta u^i_t -P(u_t^i\cdot \nabla)u_t^i + P (\sigma_k \cdot \nabla) u^i_t  \dot{z}_t^k , \;\;i\in \{1,2\}. 
$$
Then $\varv := u^1 - u^2$  satisfies
$$
\partial_t \varv =  \nu \Delta \varv - (B_P(u^1) - B_P(u^2)) + P  ( \sigma_k \cdot \nabla)\varv \dot{z}_t^k ,
$$
and the chain rule gives for all $x\in \bT^2$,
$$
\frac{1}{2}\partial_t |\varv(x)|^2  =  \nu \varv(x)\cdot   \Delta \varv(x) -  \varv(x) \cdot ( B_P(u^1(x)) - B_P(u^2(x))) +  \varv(x)\cdot  (\sigma_k \cdot \nabla) \varv(x)  \dot{z}_t^k .
$$
One could proceed by integrating with respect to $x$ to obtain uniqueness and energy estimates. However, in the rough case, many of our objects are distributions, and so the action of integrating with respect to $x$ is actually applying a distribution to a test function. 

Since we do not expect our solution to be regular enough to perform this operation, we shall employ a doubling of the variables trick; that is, we consider $ t \mapsto \varv_t^{\otimes 2}(x,y) := \varv_t(x) \varv_t(y)^T$, where $T$ denotes the transpose. This is a well defined operation for any distribution and we get the formula for the square by testing this distribution against an approximation of the Dirac-delta in $x=y$. We remark that one cannot directly use  the techniques from \cite{DeGuHoTi16}, since this way of approximating the Dirac-delta violates the divergence-free condition.

Let $u^1$ and $u^2$ be solutions of \eqref{NSDiffForm}, as defined by Definition \ref{def:solution2}. For all $\phi \in \bH^3$ and $i\in\{1,2\}$ and $(s,t)\in \Delta_T$, we have
\begin{align*}
\delta u^i_{st}(\phi) = \delta\mu^i_{st}(\phi) + u^i_s( [A^{P,1,*}_{st}+A^{P,2,*}_{st} ]\phi) + u_{st}^{i;P,\natural}(\phi) ,
\end{align*}
where 
$$
\mu^i_t(\phi)=  - \int_0^t \left[  \nu (\nabla u^i_r , \nabla  \phi)  + B_P(u^i_r)(\phi)\right]\,dr.
$$
Setting $\varv=u^1-u^2$, $\varv^{\natural}=u^{1;P,\natural}-u^{2;P,\natural}$ and $\mu_t(\phi) =  - \int_0^t [ \nu ( \nabla \varv_r , \nabla \phi) + (B_P(u_r^1) - B_P(u_r^2)) (\phi) ] \, dr$, we have 
$$
\delta \varv_{st}(\phi) = \delta\mu_{st}(\phi) +  \varv_s( [A^{P,1,*}_{st}+A^{P,2,*}_{st} ]\phi) + \varv^{\natural}_{st} .
$$
Define 
$$
\omega_{\mu}(s,t)=\omega_{\mu^1}(s,t)+\omega_{\mu^2}(s,t),
$$
and notice that 
$$
\left|\delta \mu_{st}(\phi)\right|_{-1}\lesssim \omega_{\mu}(s,t).
$$

We denote by $a \st b$ the symmetrization of the tensor product of two functions $a,b:\bT^2\to\bR^2$; that is, 
$$a \st b (x,y):= \frac{1}{2} ( a \otimes b  + b \otimes a)(x,y)=\frac{1}{2} \big( a(x)  b(y)^T  + b(x) a(y)^T\big), \;\;(x,y)\in \bT^2.$$

\begin{lemma} \label{tensorLemma}
The weakly continuous mapping $\varv_t^{\otimes 2}:[0,T]\rightarrow \bH^{-3}_x \otimes \bH_y^{-3}$ satisfies the equation 
\begin{equation} \label{eq:tensor difference 2}
\delta \varv^{\otimes 2}_{st} - 2\int_s^t \left( \nu  \varv_r \st \Delta \varv_r - \varv_r \st (B_P(u^1_r) - B_P(u^2_r)) \right) \, dr =  \left(\Gamma_{st}^1+\Gamma_{st}^2\right)\varv_s^{\otimes 2} + \varv_{st}^{\otimes 2,\natural}, 
\end{equation}
where 
$$
\Gamma^1:=A^{P,1}\otimes I + I\otimes A^{P,1}, \quad \Gamma^2:= A^{P,2}\otimes I + I \otimes A^{P,2}+A^{P,1}\otimes A^{P,1},
$$
and  $v^{\otimes 2, \natural} \in C^{ \frac{p}{3}  -\textnormal{var}}_{2, \varpi, L}([0,T]; \bH^{-3}_x \otimes \bH_y^{-3})$, for a control $\varpi$ and $L>0$. 
\end{lemma}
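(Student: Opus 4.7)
\medskip
\noindent\textbf{Proof plan.} The starting point is the algebraic Leibniz identity for two-parameter increments of a one-parameter map,
\begin{equation*}
\delta \varv^{\otimes 2}_{st} = \varv_s \otimes \delta \varv_{st} + \delta \varv_{st} \otimes \varv_s + \delta \varv_{st} \otimes \delta \varv_{st},
\end{equation*}
which holds distributionally once $\varv$ is weakly continuous into $\bH^0$. I would then substitute the solution equation $\delta \varv_{st}=\delta\mu_{st}+A^{P,1}_{st}\varv_s+A^{P,2}_{st}\varv_s+\varv^\natural_{st}$ into each occurrence of $\delta \varv_{st}$ on the right-hand side and reorganize the nine resulting products into three groups: a principal ``first-order'' term, a principal ``second-order'' term, and a residual that will become $\varv^{\otimes 2,\natural}$.

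The first-order term is $\varv_s\otimes A^{P,1}_{st}\varv_s + A^{P,1}_{st}\varv_s\otimes\varv_s$, which is exactly $\Gamma^1_{st}\varv_s^{\otimes 2}$. The second-order term should collect $\varv_s\otimes A^{P,2}_{st}\varv_s + A^{P,2}_{st}\varv_s\otimes\varv_s$ together with the diagonal square $A^{P,1}_{st}\varv_s\otimes A^{P,1}_{st}\varv_s$. The latter produces the symmetric combination $P(\sigma_k\cdot\nabla)\otimes P(\sigma_i\cdot\nabla)\, Z^i_{st}Z^k_{st}$, and here I would use that $\mathbf{Z}$ is a \emph{geometric} rough path, so that $\mathbb{Z}^{ik}_{st}+\mathbb{Z}^{ki}_{st}=Z^i_{st}Z^k_{st}$; this identifies the sum with $(A^{P,1}\otimes A^{P,1})_{st}\varv_s^{\otimes 2}$ expressed through $\mathbb{Z}$, whence the three second-order contributions assemble into $\Gamma^2_{st}\varv_s^{\otimes 2}$.

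For the drift, the contribution $\varv_s\otimes\delta\mu_{st}+\delta\mu_{st}\otimes\varv_s=2\,\varv_s\st\delta\mu_{st}$ would be rewritten as
\begin{equation*}
2\varv_s\st \delta\mu_{st}=2\int_s^t \varv_r\st d\mu_r-2\int_s^t\delta \varv_{sr}\st d\mu_r,
\end{equation*}
where $d\mu_r=[\nu\Delta \varv_r-(B_P(u^1_r)-B_P(u^2_r))]\,dr$ is absolutely continuous; the first piece produces the drift claimed in \eqref{eq:tensor difference 2} and the second is added to the remainder. Then $\varv^{\otimes 2,\natural}_{st}$ is \emph{defined} as the sum of all the leftover cross terms: $\delta\mu_{st}\otimes\delta\mu_{st}$, the mixed products $\delta\mu\otimes A^{P,i}\varv_s$ and $A^{P,i}\varv_s\otimes \delta\mu$ for $i\in\{1,2\}$, the off-diagonal products $A^{P,1}\varv_s\otimes A^{P,2}\varv_s$, $A^{P,2}\varv_s\otimes A^{P,1}\varv_s$, $A^{P,2}\varv_s\otimes A^{P,2}\varv_s$, every term involving a $\varv^\natural$-factor, and the correction $-2\int_s^t\delta \varv_{sr}\st d\mu_r$.

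The hard (and mostly bookkeeping) part is verifying that $\varv^{\otimes 2,\natural}\in C^{p/3-\textnormal{var}}_{2,\varpi,L}([0,T];\bH^{-3}_x\otimes \bH^{-3}_y)$. For each term I would bound the tensor product in $\bH^{-3}_x\otimes\bH^{-3}_y$ by the product of the factors measured in suitable Sobolev-based norms: the $A^{P,i}$-estimates \eqref{A1bound}–\eqref{A2bound}, the control $|\delta\mu_{st}|_{-1}\lesssim\omega_\mu(s,t)$, the $p$-variation of $\delta \varv$ from Lemma~\ref{AprioriVariation}, and the sharp-increment estimate of Lemma~\ref{AprioriVariation1}; the factors involving $\varv^\natural$ use Lemma~\ref{Thm2.5}. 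Each term will then be controlled by a product of (fractional powers of) the controls $\omega_A$, $\omega_\mu$, $\omega_\varv$, $\omega_{P,\natural}$ totaling at least $3/p$ on small intervals where $\varpi(s,t)\le L$ for $\varpi:=\omega_A+\omega_\mu+\omega_{\varv}+\omega_{P,\natural}$; by superadditivity all contributions fit into a single control of $p/3$-variation. Finally, the weak continuity $t\mapsto \varv^{\otimes 2}_t$ in $\bH^{-3}_x\otimes\bH^{-3}_y$ follows from the weak continuity of $\varv$ into $\bH^0$ combined with the uniform bound $\sup_t|\varv_t|_0<\infty$.
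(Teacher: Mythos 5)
Your proposal is correct and follows essentially the same route as the paper: the Leibniz identity $\delta \varv^{\otimes 2}_{st}=2\varv_s\st\delta\varv_{st}+\delta\varv_{st}\otimes\delta\varv_{st}$, substitution of the solution equation, grouping into $\Gamma^1$, $\Gamma^2$ and a remainder, rewriting $2\varv_s\st\delta\mu_{st}$ as the claimed drift plus the correction $-2\int_s^t\delta\varv_{sr}\st d\mu_r$, and a term-by-term variation estimate of the leftover products. The only cosmetic difference is your appeal to geometricity to identify $A^{P,1}_{st}\varv_s\otimes A^{P,1}_{st}\varv_s$ with $(A^{P,1}\otimes A^{P,1})_{st}\varv_s^{\otimes 2}$ — this is tautological and the shuffle identity is only needed later when showing $\Gamma^{2,*}F_N=0$.
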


\begin{proof}
Elementary algebraic manipulations yield
\begin{align*}
\delta v^{\otimes 2}_{st} & = 2 \varv_{s} \st \delta \varv_{st}+ \delta \varv_{st}\otimes \delta \varv_{st} = 2 \varv_s \st \varv_{st}^{\natural} + 2\varv_s \st \delta \mu_{st} + 2\varv_s  \st A^1_{st} \varv_s + 2\varv_s  \st A^{P,2}_{st} \varv_s \\
& \quad + ( \varv_{st}^{\natural} + \delta\mu_{st} + A^{P,2}_{st}\varv_s )^{\otimes 2} + 2 ( \varv_{st}^{\natural} + \delta \mu_{st} + A^{P,2}_{st}\varv_s ) \st A^{P,1}_{st}\varv_s + A^{P,1}_{st}\varv_s \otimes A^{P,1}_{st}\varv_s.
\end{align*}
Thus, 
\begin{equation} \label{eq:tensor difference}
\delta v^{\otimes 2}_{st} -  2 \int_s^t\left[ \nu  \varv_r \st \Delta \varv_r - \varv_r \st (B_P(u^1_r) - B_P(u^2_r)) \right]\,dr =  \left(\Gamma_{st}^1+\Gamma_{st}^2\right)\varv_s^{\otimes 2} + \varv_{st}^{\otimes 2,\natural} ,
\end{equation}
where 
\begin{align} \label{eq:tensorEqn2}
\varv_{st}^{\otimes 2,\natural} & :=  - 2 \int_s^t \delta \varv_{sr} \st [\nu  \Delta \varv_r + (B_P(u^1_r)-B_P(u^2_r)] \, dr + 2 \varv_{st}^{\natural} \st \varv_s \notag \\
&\quad + ( \varv_{st}^{\natural} + \delta \mu_{st} + A^{P,2}_{st}\varv_s)^{\otimes 2} + 2 ( \varv_{st}^{\natural} + \delta \mu_{st} + A^{P,2}_{st}\varv_s) \st A^{P,1}_{st}\varv_s \notag  \\
&  = - 2 \int_s^t \nu \delta \varv_{sr} \st  \Delta \varv_r dr  + 2 \int_s^t \delta \varv_{sr} \st \left[B_P(u^1_r)-B_P(u^2_r)\right]\,dr + 2 \varv_{st}^{\natural} \st \varv_s \notag \\
& \quad+  \varv_{st}^{\natural} \otimes \varv_{st}^{\natural} +  \varv_{st}^{\natural} \st \delta \mu_{st} + \varv_{st}^{\natural} \st A^{P,2}_{st}\varv_s  + \delta\mu_{st} \otimes \delta\mu_{st}  + \delta\mu_{st}\st A^{P,2}_{st}\varv_s + A^{P,2}_{st}\varv_s \otimes A^{P,2}_{st}\varv_s \notag \\
& \quad+ 2  \varv_{st}^{\natural} \st A^{P,1}_{st}\varv_s + \delta \mu_{st}\st A^{P,1}_{st}\varv_s + A^{P,2}_{st}\varv_s \st A^{P,1}_{st}\varv_s.
\end{align}
Estimating $\varv_{st}^{\otimes 2,\natural}$ term-by-term and making use of  \eqref{A1bound},  \eqref{A2bound},  and \eqref{trilinear form estimate}, we find that there is a control $\varpi$ and $L>0$ such that    $v^{\otimes 2, \natural} \in C^{ \frac{p}{3}  -\textnormal{var}}_{2, \varpi, L}([0,T]; \bH^{-3}_x \otimes \bH_y^{-3})$.
\end{proof}

Let $\mathbf{f}_n$ be the orthonormal basis of $\{u\in L^2(\bT^d;\bC^d): \nabla \cdot u=0\}$  described in Section \ref{ss:notation}.  Define $F_N(x,y) := \sum_{|n| \leq N} \mathbf{f}_n(x) \otimes \overline{\mathbf{f}_n(y)}$. It follows that for all $f,g \in \bH^0$, 
$$
f \otimes g(F_N) = \sum_{ |n| \leq N} (f,\mathbf{f}_n) \overline{(g,\mathbf{f}_n)} \rightarrow (f,g)
$$ 
as $N \rightarrow \infty$. In particular,  $\varv^{\otimes 2}(F_N)\rightarrow |\varv|_0^2$ as $N\rightarrow \infty$.
Moreover, by \eqref{basis prop}, we have 
$$\nabla_x F_N + \nabla_y F_N = 0.$$  Motivated by this, we will test  equation \eqref{eq:tensor difference} $F_N$ and pass to the limit as $N\rightarrow \infty$ to derive the equation for the square.

Because $\sigma_k$ is constant, we have
\begin{align*}
\Gamma_{st}^{1,*} F_N & =  ( (\sigma_k \cdot \nabla_x) F_N  + (\sigma_k \cdot \nabla_y) F_N ) Z_{st}^k = 0
\end{align*}
and
\begin{align*}
\Gamma_{st}^{2,*} F_N & =   (\sigma_k  \cdot \nabla_x) (\sigma_j \cdot \nabla_x) F_N \mathbb{Z}_{st}^{j,k}    + (\sigma_k  \cdot \nabla_y)  (\sigma_j \cdot \nabla_y) F_N  \mathbb{Z}_{st}^{j,k}  +  (\sigma_k  \cdot \nabla_x) ( \sigma_j \cdot \nabla_y) F_N  Z_{st}^j Z_{st}^k \\
& = (\sigma_k  \cdot \nabla_x) ( \sigma_j \cdot \nabla_x) F_N \mathbb{Z}_{st}^{j,k}    + (\sigma_k  \cdot \nabla_x) ( \sigma_j \cdot \nabla_x) F_N   \mathbb{Z}_{st}^{k,j}  -  (\sigma_k  \cdot \nabla_x) ( \sigma_j \cdot \nabla_x) F_N  Z_{st}^j Z_{st}^k \\
&  = 0,
\end{align*}
where we have used $(\sigma_k \cdot \nabla) ( \sigma_j \cdot \nabla ) = (\sigma_j \cdot \nabla) ( \sigma_k \cdot \nabla )$ and $\mathbb{Z}_{st}^{j,k} +\mathbb{Z}_{st}^{k,j}  = Z_{st}^j  Z_{st}^k$. Applying the divergence theorem, we get 
\begin{align*}
\int_s^t \nu \varv_r \otimes \Delta \varv_r (F_N) \, dr & = - \int_s^t \nu \varv_r \otimes \nabla \varv_r (\nabla_y F_N) \, dr = \int_s^t \nu \varv_r \otimes \nabla \varv_r (\nabla_x F_N) \, dr  \\
&= - \int_s^t \nu \nabla \varv_r \otimes \nabla \varv_r (F_N) \, dr, 
\end{align*}
and hence that
\begin{align*}
2 \int_s^t \nu \varv_r \st \Delta \varv_r (F_N) \, dr  = - 2\int_s^t \nu \nabla \varv_r \otimes \nabla \varv_r (F_N) \, dr .
\end{align*}
Since $v \in L_T^2\bH^1$, we have $\nabla \varv_r \otimes \nabla \varv_r (F_N) \rightarrow |\nabla \varv_r |_0^2$ as $N\rightarrow \infty $ for almost all $r \in [s,t]$. Using the bound $|\nabla \varv_r \otimes \nabla \varv_r (F_N)| \leq |\nabla \varv_r |_0^2$, it follows from the  dominated convergence theorem that 
$$
\lim_{N \rightarrow \infty} 2 \int_s^t \nu \varv_r \st \Delta \varv_r (F_N) \, dr = - 2\nu \int_s^t |\nabla \varv_r |_0^2 \, dr .
$$
Using the divergence theorem again, we find
$$
\int_s^t \varv_r \otimes (u_r^i \cdot \nabla) u^i_r  (F_N) \, dr = - \int_s^t \varv_r \otimes (u^i_r)^T u_r^i  ( \nabla_y F_N)\,dr = - \int_s^t \nabla \varv_r \otimes (u^i_r)^T u_r^i  (F_N)\,dr .
$$
Using the interpolation inequality $|(u_r^i)^T u_r^i|_0 \lesssim |u_r^i|_0^{\frac{1}{2}} | u_r^i|_1^{\frac{1}{2}}$, we apply the dominated convergence theorem to get
$$
\lim_{N \rightarrow \infty} 2 \int_s^t \varv_r \st B_P(u_r^i)  (F_N) \,dr = 2 \int_s^t  B_P(u_r^i)(\varv_r) \, dr.
$$

We are now ready to finish the proof of uniqueness. 

\begin{theorem} \label{contractiveTheorem}
Let $d=2$ and assume the vector fields $\sigma_k(x) = \sigma_k$, $k\in \{1,\ldots,K\}$, are constant. Suppose that $u^1$ and $u^2$ are two solutions of \eqref{NSDiffForm} in the sense of Definition \ref{def:solution2}. Then the difference $v = u^1 - u^2$ satisfies
\begin{align}\label{eq:EnergydiffEquality}
|\varv_t|_0^2  + & 2 \int_0^t (B_P(u_r^1) - B_P(u_r^2)) ( \varv_r  ) \, dr   + 2\nu  \int_0^t  |\nabla \varv_r|_0^2 \, dr   = |\varv_0|_0^2, \quad \forall t\in [0,T].
\end{align}
Furthermore, there is a constant  $c=c(\nu,T)$ such that
\begin{equation} \label{eq:EnergydiffInequality}
|\varv_t|_0^2 + \int_0^t  |\nabla \varv_r|_0^2 \, dr \lesssim_{\nu,T} |\varv_0|_0^2 \exp \left\{ c \int_0^t |u^1_r|^2_0 | u_r^1|^2_1 \, dr \right\},\;\;\forall t\in[0,T].
\end{equation}
Therefore, there exists a unique solution $u$ of \eqref{NSDiffForm}.
\end{theorem}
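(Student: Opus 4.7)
The plan is to derive the energy equality \eqref{eq:EnergydiffEquality} for $v=u^1-u^2$ by testing the tensor equation of Lemma~\ref{tensorLemma} against the approximate diagonal $F_N=\sum_{|n|\le N}\mathbf{f}_n\otimes\overline{\mathbf{f}_n}$ and passing $N\to\infty$, then to close the estimate with the two-dimensional Ladyzhenskaya inequality and Gronwall. The excerpt has already done most of the test-function bookkeeping: the hypothesis that $\sigma_k$ is constant, combined with the identities $\nabla_x F_N+\nabla_y F_N=0$, $(\sigma_k\cdot\nabla)(\sigma_j\cdot\nabla)=(\sigma_j\cdot\nabla)(\sigma_k\cdot\nabla)$, and the geometric rough-path relation $\mathbb{Z}^{j,k}+\mathbb{Z}^{k,j}=Z^j Z^k$, yields $\Gamma^{1,*}_{st}F_N=\Gamma^{2,*}_{st}F_N=0$, and the Laplacian and convective traces against $F_N$ converge respectively to $-2\nu\int_s^t|\nabla v_r|_0^2\,dr$ and $2\int_s^t(B_P(u^1_r)-B_P(u^2_r))(v_r)\,dr$.

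The main obstacle is to show that the remainder $v^{\otimes2,\natural}_{st}(F_N)$ vanishes. My approach is an additivity argument. Since $\Gamma_{st}F_N=0$, the tensor equation, tested against $F_N$, defines
\begin{equation*}
R^N_{st}:=v^{\otimes2,\natural}_{st}(F_N)=\delta v^{\otimes2}_{st}(F_N)-2\int_s^t\bigl(\nu\, v_r\st\Delta v_r-v_r\st(B_P(u^1_r)-B_P(u^2_r))\bigr)(F_N)\,dr,
\end{equation*}
whose right-hand side is the sum of a $\delta$ of a one-index function and an additive integral; hence $\delta R^N_{s\theta t}=0$. On the other hand, Lemma~\ref{tensorLemma} provides a control $\omega$ and $L>0$ with $|v^{\otimes2,\natural}_{st}|_{\bH^{-3}\otimes\bH^{-3}}\le\omega(s,t)^{3/p}$ whenever $\varpi(s,t)\le L$, so $|R^N_{st}|\le|F_N|_{3,3}\,\omega(s,t)^{3/p}$ on such intervals. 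Because $p<3$, the exponent $3/p>1$; refining a partition on each piece where $\varpi\le L$ yields $|R^N_{st}|\le|F_N|_{3,3}(\max_i\omega(t_i,t_{i+1}))^{3/p-1}\omega(s,t)\to 0$ as the mesh shrinks, so $R^N\equiv 0$ on the piece, and additivity across a finite covering of $[0,T]$ by such pieces extends this to all of $\Delta_T$.

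With $R^N\equiv 0$, passing $N\to\infty$ in the tested tensor identity produces \eqref{eq:EnergydiffEquality}. To obtain \eqref{eq:EnergydiffInequality}, I would use \eqref{eq:B prop} to rewrite $(B_P(u^1)-B_P(u^2))(v)=b(v,u^1,v)=-b(v,v,u^1)$, apply the two-dimensional estimate \eqref{ineq:Lady est} to bound $|b(v,v,u^1)|\lesssim|v|_0^{1/2}|v|_1^{3/2}|u^1|_0^{1/2}|u^1|_1^{1/2}$, and then use Young's inequality at exponents $(4/3,4)$ to majorize this by $\epsilon|v|_1^2+C_\epsilon|v|_0^2|u^1|_0^2|u^1|_1^2$. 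Choosing $\epsilon$ small enough to absorb the $|\nabla v|_0^2$ contribution into the dissipation on the left and invoking Gronwall's lemma gives the claimed inequality; uniqueness is then the case $v_0=0$.

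For the remaining assertions, the equality $|u_t|_0^2+2\nu\int_0^t|\nabla u_r|_0^2\,dr=|u_0|_0^2$ is obtained by repeating the tensor argument for $u^{\otimes2}$, where now $B_P(u)(u)=b(u,u,u)=0$ by \eqref{eq:B prop} eliminates the convective trace entirely; continuity $u\in C_T\bH^0$ then follows from combining the weak continuity already established with the resulting norm-continuity $t\mapsto|u_t|_0$. Finally, constancy of $\sigma_k$ forces $Q(\sigma_k\cdot\nabla)$ to annihilate divergence-free fields, so both $A^{Q,1}_{st}u_s$ and $A^{Q,2}_{st}u_s$ vanish, the rough integral constructed in Section~\ref{s:pressure} is identically zero, and $\pi_\cdot=-\int_0^\cdot B_Q(u_r)\,dr\in C^{1-\textnormal{var}}([0,T];\bH^{-1}_\perp)\subset C^{p-\textnormal{var}}([0,T];\bH^{-1}_\perp)$ via the bound $|B_Q(u_r)|_{-1}\lesssim|u_r|_1^2\in L^1_T$.
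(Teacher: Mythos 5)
Your proposal is correct and follows essentially the same route as the paper: test the tensorized equation of Lemma \ref{tensorLemma} against $F_N$, use $\Gamma^{1,*}_{st}F_N=\Gamma^{2,*}_{st}F_N=0$ and the vanishing of the finite-$\frac{p}{3}$-variation additive remainder to pass to the diagonal, then close with \eqref{eq:B prop}, \eqref{ineq:Lady est}, Young's inequality and Gronwall. Your additivity-plus-partition-refinement argument for $R^N\equiv 0$ is simply the explicit version of the paper's terse observation that an increment of a function cannot have finite $q$-variation for $q<1$ unless it vanishes, and the final paragraph (single-solution energy equality, continuity, pressure) concerns the subsequent corollary rather than this theorem.
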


\begin{remark}
The right-hand-side of \eqref{eq:EnergydiffInequality} is finite. Indeed, we have
$$
\int_0^t |u^1_r|^2_0 |  u_r^1|^2_1\, dr \leq \sup_{t \in [0,T]} |u^1_t |_0^2 \int_0^T |  u^1_r|_1^2 \, dr,
$$
which is finite since $u\in L_T^2\bH^1\cap L^{\infty}_T\bH^0$.
\end{remark}

\begin{proof}[Proof of Theorem \ref{contractiveTheorem}.]
Testing equation \eqref{eq:tensor difference} against $F_N$ and using that $\Gamma_{st}^{i,*} F_N = 0$ for $i\in \{1,2\}$, we  find 
$$
\delta \varv_{st}^{\otimes 2} (F_N)  - 2\int_s^t \left(\nu \varv_r \st \Delta \varv_r + \varv_r \st (B_P(u^1_r) - B_P(u^2_r)) \right)\,dr (F_N) = \varv_{st}^{\otimes 2, \natural}(F_N).
$$
Since the  left-hand-side is an increment of a function,  the right-hand-side $(s,t) \mapsto \varv_{st}^{\otimes 2, \natural}(F_N)$ must be as well. By virtue of Lemma \ref{tensorLemma}, we know that $\varv_{}^{\otimes 2, \natural}(F_N)$ has finite $\frac{p}{3}$-variation, which is only possible if $\varv_{st}^{\otimes 2, \natural}(F_N) = 0$.  Thus, for every $N\in \bN$,
$$
\delta \varv_{st}^{\otimes 2} (F_N)  - 2\int_s^t \left( \nu \varv_r \st \Delta \varv_r - \varv_r \st (B_P(u^1_r) - B_P(u^2_r))\right) \, dr (F_N) = 0.
$$
Passing to the limit as $N \rightarrow \infty$ in the above equality, we get
\begin{align*}
\delta (|v|_0^2)_{st}  + & 2 \int_s^t (B_P(u_r^1) - B_P(u_r^2)) ( \varv_r  )\,dr   +  2\nu \int_s^t  |\nabla \varv_r|_0^2\,dr   = 0, \;\;\forall t\in [0,T].
\end{align*}
Moreover,  using \eqref{eq:B prop},  \eqref{ineq:Lady est}, and Young's inequality (i.e,  $ab \leq \epsilon a^\frac{4}{3} + c_{\epsilon} b^{4}$, $\forall a,b,\epsilon\ge 0$, where $c_{\epsilon}$ is a constant depending only on $\epsilon$),  for every $\epsilon>0$, we have 
$$
(B_P(u^1)-B_P(u^2))(\varv)=-B_P(\varv,\varv)(u^1)\le c | \varv|_1^{\frac{3}{2}}|\varv|_0^{\frac{1}{2}}|u^1|_0^{\frac{1}{2}}|  u^1|_1^{\frac{1}{2}}
\le \epsilon|  \varv|_1^2+c_{\epsilon}|\varv|_0^{2}|u^1|_0^{2}|  u^1|_1^{2},
$$
and hence
$$
|\varv_t|_0^2 + 2\nu  \int_0^t  |\nabla \varv_r|_0^2\,dr \le  |\varv_0|_0^2 + \epsilon \int_0^t | \varv_r|_1^2\,dr + c_{\epsilon} \int_0^t |\varv_r|_0^2 |u^1_r|^2_0 | u_r^1|^2_1\,dr.
$$
Choosing $\epsilon $ small enough, we find
$$
|\varv_t|_0^2 +  \int_0^t  |\nabla \varv_r|_0^2\,dr \lesssim_{\nu}  |\varv_0|_0^2 +   \int_0^t |\varv_r|_0^2 (1+|u^1_r|^2_0 | u_r^1|^2_1)\,dr.
$$
We then complete the proof by applying  Gronwall's lemma. From the uniqueness of the velocity and the pressure recovery in Section \ref{s:pressure}, we immediately obtain the uniqueness of the associated pressure $\pi$.
\end{proof}

\subsubsection{Energy equality and continuity}

Letting $ u^1=u$ and $u^2 = 0$ in \eqref{eq:EnergydiffEquality}, where $u$ is the unique solution, we obtain the following corollary.

\begin{corollary}
Let $d=2$ and assume the vector fields $\sigma_k(x) = \sigma_k$ are constant for all $k\in \{1,\dots,K\}$. Then the unique solution  $u$  of \eqref{NSDiffForm} is in $C_T\bH^0$ and satisfies the  energy equality:
\begin{equation} \label{EnergyEqualityEq}
|u_t |_0^2 + 2  \nu \int_0^t |\nabla u_r|_0^2\,dr = |u_0|_0^2, \quad \forall t\in[0,T].
\end{equation}
\end{corollary}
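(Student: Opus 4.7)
The plan is to apply Theorem \ref{contractiveTheorem} with the choice $u^1 = u$ (the unique solution guaranteed by Theorem \ref{contractiveTheoremNoContraction}) and $u^2 \equiv 0$. First I would verify that $u^2 \equiv 0$ is indeed a solution in the sense of Definition \ref{def:solution2} with initial datum $0$: setting $u \equiv 0$ and $\pi \equiv 0$ makes every integrand in \eqref{SystemSolutionU} and \eqref{SystemSolutionPi} vanish as well as each instance of $A^{P,i,*}_{st}$ and $A^{Q,i,*}_{st}$ applied to the zero function, so the remainders $u^{P,\natural}$ and $u^{Q,\natural}$ vanish identically. The required regularity is then trivial.

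With this choice, $\varv = u^1 - u^2 = u$, and the energy equality \eqref{eq:EnergydiffEquality} reads
\begin{equation*}
|u_t|_0^2 + 2\int_0^t \bigl(B_P(u_r) - B_P(0)\bigr)(u_r)\,dr + 2\nu \int_0^t |\nabla u_r|_0^2\,dr = |u_0|_0^2.
\end{equation*}
Since $B_P(0) = 0$ and, by \eqref{eq:B prop}, $B_P(u_r)(u_r) = b(u_r, u_r, u_r) = 0$, the middle term vanishes identically, which yields \eqref{EnergyEqualityEq}.

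For the claim $u \in C_T \bH^0$, I would use the standard Hilbert-space argument combining weak continuity with continuity of the norm. Theorem \ref{existenceThm} already ensures that $u$ is weakly continuous with values in $\bH^0$, while the energy equality just established shows that $t \mapsto |u_t|_0^2$ is continuous on $[0,T]$ (the map $t \mapsto \int_0^t |\nabla u_r|_0^2\,dr$ is absolutely continuous by the integrability $u \in L^2_T\bH^1$). For any $t_n \to t$ in $[0,T]$, the weak convergence $u_{t_n} \rightharpoonup u_t$ in $\bH^0$ together with $|u_{t_n}|_0 \to |u_t|_0$ implies strong convergence via the identity
\begin{equation*}
|u_{t_n} - u_t|_0^2 = |u_{t_n}|_0^2 - 2(u_{t_n}, u_t) + |u_t|_0^2 \longrightarrow 0.
\end{equation*}
Hence $u \in C_T\bH^0$, completing the proof. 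No step here is particularly delicate; the only thing worth checking carefully is that $u^2 \equiv 0$ qualifies as a solution in the precise sense of Definition \ref{def:solution2} so that Theorem \ref{contractiveTheorem} applies verbatim.
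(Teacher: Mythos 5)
Your proposal is correct and follows essentially the same route as the paper: substitute $u^1=u$, $u^2\equiv 0$ into the energy-difference identity \eqref{eq:EnergydiffEquality} of Theorem \ref{contractiveTheorem}, note that the convective term vanishes by \eqref{eq:B prop}, and then upgrade weak continuity in $\bH^0$ to strong continuity via convergence of the norms supplied by the energy equality. The only cosmetic difference is that the paper re-derives the weak continuity of $u$ inside the corollary's proof rather than quoting it from Theorem \ref{existenceThm}, and it leaves the verification that $0$ is a solution implicit, which you spell out.
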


\begin{proof}
We start by showing that $u$ is continuous as a mapping with values in $\bH^0$ equipped with the weak topology.
It is immediate from \eqref{RNS2} that $\lim_{s \rightarrow t} u_s(\phi) = u_t(\phi)$ for any $\phi \in \bH^3$. Moreover, since $\{ |u_s|_0 \}_{s \in [0,T] }$ is bounded, there exists a subsequence $\{ u_{s_n} \}_{n} \subset \{ u_s \}_{s \rightarrow  t}$ such that $ u_{s_n}(\phi)$ has a  limit for all $\phi \in \bH^3$. Because $\bH^3$ is dense in $\bH^0$ and weak limits are unique, we must have convergence $\lim_{s \rightarrow t} u_s(\phi) = u_t(\phi)$ for all $\phi \in \bH^0$. By virtue of the energy equality \eqref{EnergyEqualityEq}, we have that $\lim_{s \rightarrow t} |u_s|_0 = |u_t|_0$, which implies strong convergence.
\end{proof}

\begin{remark}

For constant vector fields $\sigma_k$, we have $A^{Q,i}_{st} u_s = 0$ for $i\in \{1,2\}$, and so \eqref{SystemSolutionPi} reduces to the deterministic case; that is,
$$
\pi_t = -   \int_0^tQ(u_r \cdot \nabla ) u_r\,dr.
$$
Applying  \eqref{trilinear form estimate} with $\alpha_1=1,\alpha_2=0$ and $\alpha_3=1$, we find that   $\pi\in C^{1-\textnormal{var}}([0,T];\bH^{-1}_{\perp})$.  
\end{remark}

\subsection{Stability in two spatial dimension, proof of Corollary \ref{cor:stability}}
\label{s:stab}

\begin{proof}[Proof of Corollary \ref{cor:stability}]
For $n\in\bN$, consider a sequence of  initial conditions $\{u^n_0\}_{n=1}^{\infty}\subset \bH^0$, constant vector fields $\{\sigma^{n}\}_{n=1}^\infty\subset \bR^{d\times k}$ and continuous geometric $p$-rough paths $\{\bZ^n=(Z^n,\mathbb{Z}^n)\}_{n=1}^{\infty}\in  \mathcal{C}^{p-\textnormal{var}}_g([0,T];\bR^K)$. According to Theorem \ref{existenceThm}, there exists a sequence $(u^n,\pi^n)_{n=1}^{\infty}$ of solutions to \eqref{NSDiffFormSystem} corresponding to the datum $\{(u_0^n,\sigma^n,\bZ^n)\}_{n=1}^{\infty}$. Moreover, by virtue of the energy equality  \eqref{EnergyEqualityEq}, we have
\begin{equation}\label{ineq:nrgbound in stab}
|u_t^n|_0^2+2\nu \int_0^t|\nabla u_r^n|^2\,dr=|u_0^n|^2, \;\;\forall t\in[0,T].
\end{equation}
Thus, in view of Lemma  \ref{Thm2.5} and Lemma \ref{AprioriVariation} and Remark \ref{rem:oneindexpathslocal},  we obtain 
\begin{equation}\label{ineq:pvarbound in stab}
|u^n|_{p-\textnormal{var};[0,T];\bH^{-1}}\leq c\big(|u^n_0|_0,|\sigma^n|,|Z^n|_{p-\textnormal{var};[0,T]},|\mathbb{Z}^n|_{\frac{p}{2}-\textnormal{var};[0,T]}\big),
\end{equation}
for some function $c$  that is increasing in its arguments.

Assume now that $u^n_0\to u_0$ in $\bH^0$, $\sigma^n\to\sigma$ in $\bR^{2\times K}$ and  $\bZ^n\to \bZ=(Z,\mathbb Z)$ in the rough path topology \eqref{p-var-rp} (i.e., $Z^n\to Z$ in $C^{p-\textnormal{var}}_2([0,T];\bR^K)$ and $\mathbb Z^n\to \mathbb{Z}$ in $C^{p-\textnormal{var}}_2([0,T];\bR^{K\times K})$). Then the estimates \eqref{ineq:nrgbound in stab} and \eqref{ineq:pvarbound in stab} yields a uniform (in $n$) bound for the sequence $\{u^n\}_{n=1}^{\infty}$ in $L^\infty_T\bH^0\cap L^2_T\bH^1\cap C^{p-\textnormal{var}}([0,T];\bH^{-1})$. Hence, due to Lemma \ref{lem:weakc}  there exists $u\in L^\infty_T\bH^0\cap L^2_T\bH^1\cap C^{p-\textnormal{var}}([0,T];\bH^{-1})$ such that, up to a subsequence,
$$
u^n\to u\quad\text{in}\quad L^2_T\bH^0\cap C_T\bH_w^0
$$
as $n$ tends to infinity.

Similar to the proof of Theorem \ref{existenceThm}, we may pass to the limit in the equation  and verify that $u$ solves \eqref{NSDiffFormSystem} with the datum $(u_0,\sigma,\bZ)$. Since uniqueness holds true for \eqref{NSDiffFormSystem} in two dimensions with constant vector fields, we deduce that the whole sequence $u^n$ converges to $u$ in $L^2_T\bH^0\cap C_T\bH_w^0$.

To see the convergence of $\pi^n$, we  note that since the vector fields are constant we have $A^{Q,i}_{st} u_s = 0$ for $i\in \{1,2\}$, and hence 
$$
\pi_t^n = - \int_0^t B_Q(u_r)\,dr.
$$
The convergence $\pi^n \rightarrow \pi$ in $C^{1-\textnormal{var}}([0,T]; \bH_{\perp}^{-2})$ follows since $u^n$ converges to $u$ in $L^2_T\bH^0$. Indeed,
\begin{align*}
\left| \int_s^t B_Q(u_r)( \psi)  - B_Q(u_r^n)(\psi)\,dr \right| & \leq \left| \int_s^t B_Q(u_r - u_r^n, u_r)( \psi)\,dr \right|    + \left| \int_s^t B_Q(u_r^n, u_r - u_r^n)( \psi)\,dr \right| \\
& \lesssim  \int_s^t |u_r - u_r^n|_0 |u_r|_1 |\psi |_2\,dr   +  \int_s^t |u^n_r|_1 |\psi |_2 |u_r - u_r^n|_0  \,dr
\end{align*}
for every $\psi \in \bH_{\perp}^2$, where we have used  \eqref{eq:B prop}  and \eqref{trilinear form estimate} with $\alpha_1 = \alpha_2 = 0$ and $\alpha_3 = 2$, as well as  $\alpha_1 =1$, $\alpha_2 = 1$ and $\alpha_3 = 0$. 
\end{proof}

\appendix

\section{Compact embedding results} 

The following compact embedding result is comparable to the fractional version of the  Aubin-Lions compactness result (see, e.g., \cite[Theorem 2.1]{flandoli1995martingale}).  Before we come to the embedding itself, we need to prove a simple lemma.

\begin{lemma} \label{lemmata}
If $\omega$ is a continuous control, then
$$
\lim_{a \rightarrow 0} \sup_{s \in [0,T]} \sup_{t \in [s,s+a]} \omega(s,t) = 0.
$$
\end{lemma}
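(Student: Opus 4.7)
The plan is to exploit the fact that $\omega$ is continuous on the compact set $\Delta_T \subset \mathbb{R}^2$, hence uniformly continuous, together with the defining property $\omega(s,s) = 0$ for all $s \in [0,T]$.

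More concretely, given $\epsilon > 0$, uniform continuity of $\omega$ on $\Delta_T$ yields some $\delta = \delta(\epsilon) > 0$ such that whenever $(s_1,t_1),(s_2,t_2) \in \Delta_T$ satisfy $|(s_1,t_1)-(s_2,t_2)| < \delta$, one has $|\omega(s_1,t_1)-\omega(s_2,t_2)| < \epsilon$. Then, for any $a \in (0,\delta)$ and any $s \in [0,T]$, $t \in [s, (s+a)\wedge T]$, the points $(s,t)$ and $(s,s)$ both lie in $\Delta_T$ and satisfy $|(s,t)-(s,s)| = |t-s| \le a < \delta$. Applying uniform continuity and using $\omega(s,s) = 0$, we conclude $\omega(s,t) = |\omega(s,t)-\omega(s,s)| < \epsilon$. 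Taking the supremum over $s$ and $t$ gives
$$\sup_{s \in [0,T]}\sup_{t \in [s,s+a]} \omega(s,t) \le \epsilon,$$
from which the claim follows by letting $\epsilon \to 0$.

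There is no real obstacle here: the superadditivity of the control plays no role, and the argument is a direct consequence of the combination of (a) compactness of the triangular domain $\Delta_T$, (b) continuity of $\omega$, and (c) vanishing of $\omega$ on the diagonal. The only small bookkeeping point is to intersect with $[0,T]$ to ensure that $t = (s+a) \wedge T$ remains admissible, which does not affect the estimate.
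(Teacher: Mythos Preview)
Your argument is correct and actually cleaner than the paper's. The paper proceeds in two steps: first it uses superadditivity to deduce the monotonicity $\omega(s,t)\le\omega(s,s+a)$ for $t\in[s,s+a]$, thereby reducing the double supremum to $\sup_{s}\omega(s,s+a)$; second, it argues by contradiction, extracting a subsequence $s_{n_k}\to s$ from a hypothetical bad sequence and invoking pointwise continuity of $\omega$ at the diagonal point $(s,s)$ to reach $0\ge\epsilon$. Your route bypasses both the superadditivity-based reduction and the subsequence argument by appealing directly to uniform continuity of $\omega$ on the compact set $\Delta_T$. This is more economical and, as you observe, shows that superadditivity is irrelevant here: continuity on $\Delta_T$ together with vanishing on the diagonal already suffices. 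The paper's approach, on the other hand, makes the role of the control structure slightly more visible by using monotonicity, but at the cost of a longer argument.
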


\begin{proof}

Owing to superadditivity,  for any $t \in [s, s + a]$, we have $\omega(s,t) \leq \omega(s, s+a)$, and hence the claim follows once we show that
$$
\lim_{a \rightarrow 0} \sup_{s \in [0,T]}  \omega(s,s+a) = 0 .
$$
Suppose, by contradiction, that there exists an $\epsilon > 0$ and a sequence $\{(s_n,a_n)\}_{n=1}^{\infty} \subset [0,T] \times [0,1]$ such that $\lim_{n\rightarrow \infty}a_n = 0$ and
$$
\omega(s_n, s_n + a_n)  > \epsilon, \quad \forall n\in \mathbf{N}.
$$
Since $[0,T]$ is compact, there exists an $s \in [0,T]$ and a subsequence $\{(s_{n_k}, a_{n_k})\}_{k=1}^{\infty} \subset \{(s_n, a_n)\}_{n=1}^{\infty}$ converging to $(s,0)$. By the  continuity of the control $\omega$, we have
$$
\epsilon \leq \lim_{k \rightarrow \infty} \omega(s_{n_k}, s_{n_k} + a_{n_k}) = \omega(s,s) = 0,
$$
which is a contradiction. 
\end{proof}

\begin{lemma} \label{CompactnessLemma}
Let  $\omega$ and $\varpi$  be a controls on $[0,T]$ and $L,\kappa >0$. Let
$$
X=L_T^2\bH^1 \cap \left\{ g  \in C_T\bH^{-1}  : |\delta g_{st}|_{-1} \leq \omega(s,t)^{\kappa}, \;\forall (s, t)  \in \Delta_{T} \textnormal{  with } \varpi(s,t) \leq L   \right\}
$$
be endowed with the norm
$$
|g|_X=|g|_{L_T^2\bH^1}+\sup_{t\in [0,T]}|g_t|_{-1}+\sup\left\{\frac{ |\delta g_{st}|_{-1}}{\omega(s,t)^{\kappa}}: (s,t)\in \Delta_T \;\textnormal{ s.t.} \;\varpi(s,t) \leq L\right\}.
$$
Then $X$ is compactly embedded into $C_T\bH^{-1}$ and $L^2_T \bH^0$.
\end{lemma}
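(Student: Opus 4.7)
I would show that every bounded sequence $\{g_n\}\subset X$ has a subsequence converging in $C_T\bH^{-1}$; the strong convergence in $L^2_T\bH^0$ will then follow for free from the Fourier-side interpolation $|u|_0^2\le|u|_1|u|_{-1}$ together with Cauchy--Schwarz, which give
\[
|g_n-g_m|_{L^2_T\bH^0}^2\le |g_n-g_m|_{L^2_T\bH^1}\,|g_n-g_m|_{L^2_T\bH^{-1}}\lesssim |g_n-g_m|_{C_T\bH^{-1}},
\]
where the last inequality uses the uniform bound on $|g_n|_{L^2_T\bH^1}$ and the embedding $C_T\bH^{-1}\hookrightarrow L^2_T\bH^{-1}$. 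Thus the whole problem reduces to an Arzel\`a--Ascoli-type argument in $C_T\bH^{-1}$.

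\textbf{Equicontinuity.} For this I would apply Lemma \ref{lemmata} twice: first to $\varpi$, to obtain $\delta>0$ such that $\varpi(s,t)\le L$ whenever $|t-s|\le\delta$, at which point the hypothesis defining $X$ activates and yields $|g_n(t)-g_n(s)|_{-1}\le\omega(s,t)^\kappa$ uniformly in $n$; second to $\omega$, to conclude that this right-hand side tends to zero as $|t-s|\to 0$ uniformly in $n$.

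\textbf{Pointwise compactness and diagonal extraction.} The nontrivial step is producing relative compactness of $\{g_n(t)\}$ in $\bH^{-1}$ at enough times. Since $\{g_n\}$ is bounded in $L^2_T\bH^1$, Fatou's lemma gives $\liminf_n|g_n(t)|_1<\infty$ on a set $E\subset[0,T]$ of full measure, in particular dense. I would pick a countable dense $D\subset E$, at each $t\in D$ pass to a subsequence keeping $|g_n(t)|_1$ bounded, invoke the compact embedding $\bH^1\hookrightarrow\hookrightarrow\bH^{-1}$ to extract a further subsequence convergent in $\bH^{-1}$, and patch these together into a single subsequence $\{g_{n_k}\}$ by the standard Cantor diagonal trick so that $g_{n_k}(t)$ converges in $\bH^{-1}$ for every $t\in D$. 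Combining this with the uniform equicontinuity of the previous step and a finite $\delta$-net in $D$ covering $[0,T]$, a standard three-$\ep$ estimate shows $\{g_{n_k}\}$ is Cauchy in $C_T\bH^{-1}$, and by the reduction above also in $L^2_T\bH^0$.

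\textbf{Main obstacle.} The principal difficulty is precisely the pointwise compactness step: the only spatial regularity given is $L^2_T\bH^1$-boundedness, which does not pointwise imply $\bH^1$- or even $\bH^0$-boundedness, so a priori $\{g_n(t)\}$ need not be relatively compact in $\bH^{-1}$ at any individual $t$. Fatou's lemma is what rescues the argument, by furnishing a dense set of ``good'' times at which pointwise $\bH^1$-boundedness holds along suitable subsequences; after that, the compact Sobolev embedding together with the diagonal and equicontinuity machinery close the proof in a routine way.
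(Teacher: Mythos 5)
Your route is genuinely different from the paper's: the paper never attempts pointwise compactness of $\{g_n(t)\}$ itself, but instead introduces the time-mollification $J_a g_s=\frac{1}{a}\int_s^{s+a}g_r\,dr$, which is pointwise bounded in $\bH^1$ by $a^{-1/2}|g|_{L^2_T\bH^1}$ (Cauchy--Schwarz) and hence pointwise relatively compact in $\bH^{-1}$; Arzel\`a--Ascoli is applied to $J_a\clG$ for fixed $a$, the variation bound gives $|J_ag-g|_{C_T\bH^{-1}}\to0$ uniformly on bounded sets of $X$ as $a\to 0$, and the limit is identified via weak* compactness in $L^2_T\bH^1$. Your reduction of the $L^2_T\bH^0$ statement to the $C_T\bH^{-1}$ one by interpolation, and your equicontinuity step via Lemma \ref{lemmata} applied first to $\varpi$ and then to $\omega$, are both fine and match the paper in substance.

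The gap is in the diagonal extraction. Fatou gives $\liminf_{n\to\infty}|g_n(t)|_1<\infty$ for a.e.\ $t$ \emph{for the full sequence}, but after you pass to a subsequence $S_1$ along which $|g_n(t_1)|_1$ stays bounded, there is no reason why $\liminf_{n\in S_1}|g_n(t_2)|_1$ should be finite at the next point $t_2$ of your pre-chosen dense set $D$: the sets of ``good'' indices at $t_1$ and at $t_2$ can be disjoint (for instance $|g_n(t_1)|_1=n$ for even $n$ and $|g_n(t_2)|_1=n$ for odd $n$ is compatible with a uniform $L^2_T\bH^1$ bound, since spikes at isolated times do not affect the integral). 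So the nested subsequences required by the Cantor diagonal need not exist when $D$ is fixed in advance. The argument is repairable: choose the points adaptively, i.e.\ at stage $k$ apply Fatou to the current subsequence $S_{k-1}$ to obtain a full-measure set $E_{k-1}$ and pick $t_k\in E_{k-1}\cap I_k$, where $(I_k)$ enumerates a countable basis of open intervals of $[0,T]$, so that $\{t_k\}$ is still dense and each successive extraction is legitimate; alternatively, adopt the paper's mollification, which manufactures the pointwise $\bH^1$ bound for free and avoids the issue altogether.
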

\begin{proof}
For each $a \in (0,L] $ and every $g\in L_T^2\bH^{-1}$, let us define the function $J_ag:[0,T]\rightarrow \bH^{-1}$ by
$$
J_a g_s =  \frac{1}{a} \int_{s}^{s+a }  g_t\, dt=\frac{1}{a} \int_{0}^{a }  g_{s+t}\, dt,
$$		
where we extend $g$ to $\bR_+$ by letting $g = g_T$ outside $[0,T]$. Clearly, $s\mapsto J_ag_s$ is continuous from $[0,T]$ into $\bH^{-1}$; that is,  $J_a$ is a well-defined  map from $L_T^2\bH^{-1}$ to $C_T\bH^{-1}$.  Moreover, using H\"older's inequality, for $i\in \{-1,1\}$, we find
\begin{equation}\label{ineq:Japointbound}
|J_ag_s|_{i}\le \frac{1}{a} \int_{0}^{a }  |g_{s+t}|_{i}\, dt\le \frac{1}{\sqrt{a}} \left(\int_{0}^{a}  |g_{s+t}|_{i}^2\, dt\right)^{\frac{1}{2}},
\end{equation}
which implies
$$
\int_0^T 	|J_ag_s|_{i}^2ds\le \frac{1}{a} \int_0^T\int_{0}^{a }  |g_{s+t}|_{i}^2\, dtds= \int_0^T|g_{t}|_{i}^2\, dt,
$$
and hence $|J_ag|_{L^2_T\bH^{i}}\le |g|_{L^2_T\bH^{i}}$; that is, $J_a: L^2_T\bH^i\rightarrow \bH^i$ is a bounded operator for $i\in \{-1,1\}$.

Let us show that $J_ag \rightarrow g$ in $C_T\bH^{-1}$ as $a \rightarrow 0$ uniformly with respect to $X$. For each $s\in[0,T]$, $g\in X$ ,  we have
\begin{align*}
|J_a g_s - g_s|_{-1} & = \frac{1}{a} \left| \int_{s}^{s+a} g_t\, dt - \int_{s}^{s+a} g_s\, dt \right|_{-1}   \leq  \frac{1}{a}  \int_{s}^{s+a}\left| g_t - g_s \right|_{-1}\, dt \\
&  \leq \frac{1}{a} \int_{s}^{s+a} \omega(s,t)^{\kappa}\, dt \leq \sup_{t \in [s, s+a]} \omega(s,t)^{\kappa},
\end{align*}
which converges uniformly in $s$ to 0 as $a \rightarrow 0$ by Lemma \ref{lemmata}.

Let $\clG$ be a bounded subset of  $L_T^2\bH^1$, with norm bound denoted $N_0$.  Using H\"older's inequality, for all $s, t \in [0,T]$ and $g\in \clG$, we obtain
\begin{equation}
|J_ag_t - J_a g_{s} |_{-1} =  \frac{1}{a} \left| \int_{t+ a}^{s +a} g_r\, dr - \int_{s}^t  g_t\, dt \right|_{-1} \leq \frac{2}{a}N_0 \sqrt{|s - \bar{s}|},
\end{equation}
and hence  for a fixed $a$, $J_a\clG$ is uniformly equicontinuous $\bH^ {-1}$.  Owing to \eqref{ineq:Japointbound},  for each $s\in [0,T]$, we have that $|J_ag_s|_1\le \frac{1}{\sqrt{a}}N_0$, and hence  for a fixed $a$, $J_a\clG$ is pointwise bounded in $\bH^1$. Since $\bH^1$ is compactly embedded in $\bH^{-1}$,   for a fixed $a$, $J_a\clG$ is  pointwise relatively compact in $\bH^{-1}$. Therefore, by the generalized Arzel\`a--Ascoli theorem $J_a \clG$ is relatively compact in $C_T\bH^{-1}$.

To conclude the proof, let $\{g^n\}_{n=1}^{\infty}$ be a bounded sequence in $X$.
In particular, by Banach-Alaoglu, there exists a subsequence  $\{g^{n_k}\}_{k=1}^{\infty}$ that   converges in the  weak*-topology of $L_T^2\bH^1$ to some $g \in L_T^2\bH^1$. We can reduce to the case $g=0$, and hence the proof of the compact embedding of $X$ in $C_T\bH^{-1}$ is complete if we can show that $|g^{n_k}|_{C_T\bH^{-1}}\rightarrow 0$ as $k \rightarrow \infty$. 

To this end,  for any fixed $a \in [0,L] $, by the above Arzel\`a--Ascoli argument, $\{J_a g^{n_k}\}_{k=1}^{\infty}$ has a convergent subsequence in $C_T\bH^{-1}$, which we also denote by $\{ J_a g^{n_k} \}_{k=1}^{\infty}$. We note  that this subsequence may depend on $a$. Combining this with the fact that $g^{n_k}\rightarrow 0$ in the weak*-topology of $L_T^2\bH^1$, we see that for any $f \otimes \phi \in C_T \otimes \bH^{1}$, we have
$$
\lim_{k \rightarrow \infty} \int_0^T J_a g^{n_k}_r( \phi) f_r\,dr = \lim_{k \rightarrow \infty} \int_0^T g^{n_k}_r( \phi) J_a^*f_r\,dr =0 ,
$$
so that $\lim_{k \rightarrow \infty} J_a g^{n_k} = 0$ in $C_T\bH^{-1}$. Since all subsequences converges to the same limit, this means the full sequence converges. For any $a\in (0,L]$
$$
|g^{n_k} |_{C_T\bH^{-1}} \leq |J_a g^{n_k} |_{C_T\bH^{-1}} + |J_a g^{n_k}   - g^{n_k} |_{C_T\bH^{-1}} \leq |J_a g^{n_k} |_{C_T\bH^{-1}} +  \sup_{s \in [0,T]} \sup_{t \in [s, s+a]} \omega(s,t)^{\kappa} .
$$
Letting  $k \rightarrow \infty$ first and then $a \rightarrow 0$, we find that $|g^{n_k}|_{C_T\bH^{-1}}\rightarrow 0$ as $k \rightarrow \infty$, which shows that $X$ is compactly embedded in $C_T\bH^{-1}$.

Let us now show that the set $X$ is  compactly embedded in $L_T^2\bH^0$. Using  Young's inequality, for $h \in \bH^1$ and any $\epsilon > 0$,
$$
|h|^2_0 = h(h) \leq |h|_{-1} |h|_1 \leq C_{\epsilon} |h|_{-1}^2 + \epsilon |h|_1^2
$$
for some appropriate constant $C_{\epsilon}>0$. Consequently, proceeding with the same sequence above, we find 
$$
|g^{n_k}|_{L_T^2 \bH^0}^2 \leq C_{\epsilon} |g^{n_k}|_{L_T^2 \bH^{-1}}^2 + \epsilon |g^{n_k}|_{L_T^2 \bH^{1}}^2 \leq C_{\epsilon} |g^{n_k}|_{C_T \bH^{-1}}^2 + \epsilon \sup_{n\in\bN}|g^{n}|_{L_T^2 \bH^{1}}^2 .
$$
Letting  $k \rightarrow \infty$ first, we have 
$$
\lim_{n \rightarrow \infty} |g^n|_{L_T^2 \bH^0}^2 \leq \epsilon \sup_{n\in \bN} |g^n |_{L_T^2 \bH^{1}}^2,
$$
and then letting $\epsilon \rightarrow 0$, we conclude the proof.
\end{proof}

Denote by $C_T\bH^{0}_w$ the space of  $\bH^0$-valued weakly continuous functions on $[0,T]$.

\begin{lemma}\label{lem:weakc}
Let  $\omega$ and $\varpi$  be controls on $[0,T]$ and $L,\kappa >0$. Let
$$
Y=L_T^{\infty}\bH^0 \cap \left\{ g  \in C_T\bH^{-1}  : |\delta g_{st}|_{-1} \leq \omega(s,t)^{\kappa}, \;\forall (s, t)  \in \Delta_{T} \textnormal{  with } \varpi(s,t) \leq L   \right\},
$$
be endowed with the norm
$$
|g|_Y=|g|_{L_T^\infty\bH^0}+\sup_{t\in [0,T]}|g_t|_{-1}+\sup\left\{\frac{ |\delta g_{st}|_{-1}}{\omega(s,t)^{\kappa}}: (s,t)\in \Delta_T \;\textnormal{ s.t.} \;\varpi(s,t) \leq L\right\}.
$$
Then $Y$ is compactly embedded into  $C_T\bH^{0}_w$.
\end{lemma}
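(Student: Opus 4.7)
The plan is to first extract a subsequence converging in $C_T\bH^{-1}$ and then upgrade this to convergence in $C_T\bH^0_w$ by exploiting the uniform $L^\infty_T\bH^0$ bound together with density of $\bH^1$ in $\bH^0$.

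First, I will show that any bounded sequence $\{g^n\}_{n=1}^{\infty}\subset Y$ admits a subsequence converging in $C_T\bH^{-1}$. This step mirrors the Arzel\`a--Ascoli argument from Lemma \ref{CompactnessLemma}: the uniform $L^\infty_T\bH^0$-bound combined with the compact embedding $\bH^0\hookrightarrow\bH^{-1}$ yields pointwise relative compactness in $\bH^{-1}$, while the bound $|\delta g^n_{st}|_{-1}\le C\omega(s,t)^\kappa$ (valid whenever $\varpi(s,t)\le L$) combined with Lemma \ref{lemmata} yields uniform equicontinuity from $[0,T]$ into $\bH^{-1}$ for $|t-s|$ small. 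For larger increments I would iterate across a fixed finite partition of $[0,T]$ into subintervals on which $\varpi\le L$, exactly as in Remark \ref{rem:oneindexpathslocal}. The generalized Arzel\`a--Ascoli theorem then produces a subsequence $g^{n_k}\to g$ in $C_T\bH^{-1}$.

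Next, I will verify that $g\in C_T\bH^0_w$ with $|g|_{L^\infty_T\bH^0}\le M:=\sup_n|g^n|_{L^\infty_T\bH^0}$. For each fixed $t$, the sequence $g^{n_k}_t$ is bounded in $\bH^0$ and converges strongly in $\bH^{-1}$ to $g_t$; Banach--Alaoglu together with uniqueness of limits in $\bH^{-1}$ then forces $g^{n_k}_t\rightharpoonup g_t$ in $\bH^0$, while weak lower semicontinuity of the norm gives $|g_t|_0\le M$. Weak continuity of $t\mapsto g_t$ in $\bH^0$ follows from the decomposition
$$
|(g_t-g_s,\phi)|\le 2M\,|\phi-\phi_\epsilon|_0+|g_t-g_s|_{-1}|\phi_\epsilon|_1,
$$
valid for any $\phi\in\bH^0$ and any approximant $\phi_\epsilon\in\bH^1$, combined with the already-established fact $g\in C_T\bH^{-1}$.

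Finally, I will upgrade the $C_T\bH^{-1}$ convergence to convergence in $C_T\bH^0_w$. Given $\phi\in\bH^0$ and $\phi_\epsilon\in\bH^1$ with $|\phi-\phi_\epsilon|_0<\epsilon$, the estimate
$$
\sup_{t\in[0,T]}|(g^{n_k}_t-g_t,\phi)|\le 2M\epsilon+|\phi_\epsilon|_1\,|g^{n_k}-g|_{C_T\bH^{-1}}
$$
combined with letting $k\to\infty$ first and then $\epsilon\to 0$ gives the desired uniform weak convergence. I do not anticipate any major obstacle; the only slightly technical point is the passage between the local control $\varpi\le L$ and the global equicontinuity in the first step, which is handled by the standard covering argument already used elsewhere in the paper.
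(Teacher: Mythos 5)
Your proof is correct, and it reaches the conclusion by a slightly different route than the paper. The paper's proof works directly in the weak topology: it establishes the equicontinuity estimate \eqref{eqcon} for $t\mapsto\langle g_t,\varphi\rangle$ (the same decomposition $2R|\varphi-\varphi_\epsilon|_0+c\,\omega(s,t)^\kappa$ that you use) and then invokes the generalized Arzel\`a--Ascoli theorem once, with values in the ball $B_R\subset\bH^0$ equipped with the weak topology, which is compact and metrizable. You instead apply Arzel\`a--Ascoli in $C_T\bH^{-1}$ first — using the compact embedding $\bH^0\hookrightarrow\bH^{-1}$ for pointwise relative compactness, exactly as in Lemma \ref{CompactnessLemma} — and then upgrade the strong $\bH^{-1}$ convergence to uniform weak $\bH^0$ convergence by the same density estimate. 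The two arguments use identical ingredients (the uniform $L^\infty_T\bH^0$ bound, the $\omega^\kappa$ modulus in $\bH^{-1}$, and density of $\bH^1$ in $\bH^0$); yours trades a single application of Arzel\`a--Ascoli in a non-normed target for an extra upgrading step, which arguably makes the compactness mechanism more transparent and also hands you the intermediate fact that bounded sets of $Y$ are relatively compact in $C_T\bH^{-1}$. Two small remarks: the detour through Remark \ref{rem:oneindexpathslocal} for "large increments" is unnecessary, since equicontinuity is a local property and the small-increment bound (with $a$ chosen so that $\sup_s\varpi(s,s+a)\le L$ via Lemma \ref{lemmata}) already suffices; and both your argument and the paper's tacitly use that the a.e. bound $|g_t|_0\le M$ from $L^\infty_T\bH^0$ holds for \emph{every} $t$, which follows from $C_T\bH^{-1}$-continuity and the closedness of $B_M$ in $\bH^{-1}$ — worth a sentence but not a gap.
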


\begin{proof}
Let $g\in Y$ be arbitrarily chosen. First, we will show that for all $\varphi\in \bH^0$, the mapping
\begin{equation}\label{eq:cont}
t\mapsto\langle g_t,\varphi\rangle \in C_T\bR.
\end{equation}
To this end, we observe that since $g\in L^\infty_T \bH^0$, it follows that there exists $R>0$ such that $g_t\in B_R$ for all $t\in[0,T]$, where $B_R\subset \bH^0$ is a ball of radius $R$.
Let  $\{h_n\}_{n=1}^{\infty}\subset \bH^1$ be a family whose finite linear combinations are dense in $\bH^{0}$. Then
\begin{align}\label{eqcon}
\left| \langle g_t, \varphi\rangle -  \langle g_s,\varphi\rangle \right| &\leq 
\left| \left\langle g_t-g_s, \sum_{n\leq M} \beta_n h_n  \right\rangle \right| + 
\left| \left\langle g_t-g_s,\varphi - \sum_{n\leq M} \beta_n h_n ,  \right\rangle \right| \nonumber\\ 
&\leq 
\left| \left\langle g_t-g_s, \sum_{n \leq M} \beta_n h_n , \right\rangle \right| 
+ 2R \left| \varphi - \sum_{n \leq M} \beta_n h_n \right|_{0}\nonumber\\
&\leq c(M) \omega(s,t)^\kappa+ 2R \left|\varphi - \sum_{n \leq M} \beta_n h_n \right|_{0}, 
\end{align}
where the last term can be made small uniformly for all $s,t \in [0,T]$ by taking  $M$ large enough and suitable $\{\beta_{{m}}\}_{m=1}^M$. Hence, \eqref{eq:cont} follows. The compactness of the embedding follows from the  generalized Arzel\` a--Ascoli theorem. Indeed, the ball $B_R$ is relatively weakly 
compact, and the desired equicontinuity follows from  \eqref{eqcon}.
\end{proof}

\section{Sewing lemma}

The following lemma, referred to as the \textit{sewing lemma}, lies at the very foundation of the theory of rough paths. The proof is a straightforward modification of Lemma 2.1 in \cite{DeGuHoTi16}. See, also,  Lemma 4.2 in \cite{FrHa14}.

\begin{lemma}[c.f. Lemma 2.1 in\cite{DeGuHoTi16} and Lemma 4.2 in \cite{FrHa14}] \label{sewingLemma}
Let $I$  be a subinterval of $[0,T]$, $E$ be a Banach space  and  $\zeta \in [0,1)$. Let  $\omega$ and $\varpi$ be controls on $I$ and $L > 0$. Assume that  $h:\Delta_I\rightarrow  E$  is such  that for all $(s,u,t)\in \Delta^{(2)}_I $ with $\varpi(s,t) \leq L$,
$$
|\delta h_{sut}|\le \omega(s,t)^{\frac{1}{\zeta}}.
$$
Then there exists a unique path $\clI h:I\rightarrow E$ with $\clI h_0=0$ such that  $\Lambda h:=h-\delta \clI h\in C_{2,\varpi, L}^{\zeta-\textnormal{var}}(I;E)$. Moreover, there exists a universal constant $C_{\zeta}>0$ such that for all $(s,t)\in \Delta_I $ with $\varpi(s,t) \leq L$,
\begin{equation}\label{ineq:sewing estimate}
|(\Lambda   h)_{st}|\le C_{\zeta}\omega(s,t)^{\frac{1}{\zeta}}.
\end{equation}
Furthermore, if   $h\in  C_{2,\varpi,L}^{p-\textnormal{var}}(I;E)$ for some $p\ge \zeta$, then  $\clI h\in C^{p-\textnormal{var}}(I;E)$.
\end{lemma}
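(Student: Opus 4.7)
The plan is to construct $\mathcal{I}h$ as a limit of Riemann-sum-type approximations along successively finer partitions of $[s,t]$, and then extract the required bound on $\Lambda h = h - \delta\mathcal{I}h$ from a standard greedy point-removal argument. Throughout, I work on pairs $(s,t)\in\Delta_I$ with $\varpi(s,t)\le L$, exploiting both the superadditivity of $\omega$ and the hypothesis $|\delta h_{sut}|\le\omega(s,t)^{1/\zeta}$ with $1/\zeta>1$.

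First I would fix such a pair $(s,t)$ and, for any partition $\pi = (s = t_0 < t_1 < \cdots < t_N = t)$, define the Riemann-type sum $S^\pi_{st} := \sum_{i=0}^{N-1} h_{t_i t_{i+1}}$. The key elementary identity is that removing an interior point $t_j$ changes $S^\pi$ by exactly $\delta h_{t_{j-1}\, t_j\, t_{j+1}}$, since $h_{t_{j-1}t_{j+1}} - h_{t_{j-1}t_j} - h_{t_jt_{j+1}} = \delta h_{t_{j-1}\, t_j\, t_{j+1}}$. The heart of the argument is the pigeonhole bound: by superadditivity,
\[
\sum_{j=1}^{N-1} \omega(t_{j-1},t_{j+1}) \le 2\,\omega(s,t),
\]
so there exists some $j$ with $\omega(t_{j-1},t_{j+1}) \le 2\omega(s,t)/(N-1)$. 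Removing this point introduces an error bounded by $(2\omega(s,t)/(N-1))^{1/\zeta}$. Iterating down from $\pi$ to the trivial partition $\{s,t\}$, the total deviation is controlled by
\[
|S^\pi_{st} - h_{st}| \le \sum_{N\ge 2} \bigl(2\omega(s,t)/(N-1)\bigr)^{1/\zeta} = C_\zeta\,\omega(s,t)^{1/\zeta},
\]
the series converging precisely because $1/\zeta > 1$.

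Next I would argue that $\{S^\pi_{st}\}$ is Cauchy as $|\pi|\to 0$: given two partitions, compare both to their common refinement via the same point-removal procedure, noting that the incremental errors at each removal step depend only on $\omega$-measures which shrink with mesh size. Thus $\delta\mathcal{I}h_{st} := \lim_{|\pi|\to 0} S^\pi_{st}$ exists, and passing to the limit in the inequality above yields \eqref{ineq:sewing estimate} for $(s,t)$ with $\varpi(s,t)\le L$. One then extends $\mathcal{I}h$ to all of $I$ by concatenating along a partition of $I$ with consecutive $\varpi$-gaps bounded by $L$, which exists because $\varpi$ is a continuous control; additivity of $\delta\mathcal{I}h$ across such a partition is automatic from the construction. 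For uniqueness, if $\tilde{\mathcal{I}}h$ is another candidate, then $\delta(\mathcal{I}h - \tilde{\mathcal{I}}h)$ is a true increment lying in $C^{\zeta-\textnormal{var}}_{2,\varpi,L}(I;E)$ with $\zeta < 1$; as in standard Young theory, any additive two-index map of sub-linear variation must vanish identically.

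For the final claim, assume $h\in C^{p-\textnormal{var}}_{2,\varpi,L}(I;E)$ with $p\ge\zeta$. Since $\zeta\le p$, the bound \eqref{ineq:sewing estimate} shows that $\Lambda h\in C^{p-\textnormal{var}}_{2,\varpi,L}(I;E)$, and by assumption $h$ lies in the same space; hence $\delta\mathcal{I}h = h - \Lambda h \in C^{p-\textnormal{var}}_{2,\varpi,L}(I;E)$, so by Remark \ref{rem:oneindexpathslocal} we conclude $\mathcal{I}h\in C^{p-\textnormal{var}}(I;E)$. The main technical obstacle is carrying out the point-removal bookkeeping cleanly enough to both (i) produce the universal constant $C_\zeta$ independent of the partition and (ii) ensure the Cauchy property in the limit; once the pigeonhole-plus-geometric-series estimate is in place, the remaining steps are routine.
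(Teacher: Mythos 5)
Your argument is correct and is essentially the proof the paper points to: the paper itself only cites Lemma 2.1 of \cite{DeGuHoTi16} and Lemma 4.2 of \cite{FrHa14}, and those proofs are exactly your Young-type scheme — the pigeonhole point-removal maximal inequality $|S^{\pi}_{st}-h_{st}|\le C_\zeta\,\omega(s,t)^{1/\zeta}$, convergence along refinements, local-to-global concatenation using $\varpi$, and uniqueness from the vanishing of additive maps of finite $\zeta$-variation with $\zeta<1$. The only step to write carefully is the Cauchy claim: when comparing a partition $\pi$ to a refinement $\pi'$, perform the point removal separately inside each subinterval $[t_i,t_{i+1}]$ of $\pi$ so that the error is bounded by $C_\zeta\sum_i\omega(t_i,t_{i+1})^{1/\zeta}\le C_\zeta\bigl(\max_i\omega(t_i,t_{i+1})\bigr)^{1/\zeta-1}\omega(s,t)$, which tends to zero with the mesh by Lemma \ref{lemmata}.
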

The following corollary is immediate since $\clI h$ is a path with $\clI h_0=0$, and hence vanishes if $\clI h\in C^{p-\textnormal{var}}(I;E)$ for $p>1$.
\begin{corollary}
Assume the hypothesis of Lemma \ref{sewingLemma}. If   $h\in  C_{2,\varpi,L}^{p-\textnormal{var}}(I;E)$ for some $p<1$, then for all $(s,t)\in \Delta_I $
 with $\varpi(s,t) \leq L$,
$$
| h_{st}|\le C_{\zeta}\omega(s,t)^{\frac{1}{\zeta}}.
$$
\end{corollary}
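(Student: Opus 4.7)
The plan is to apply Lemma~\ref{sewingLemma} directly and then exploit a rigidity property of continuous paths with sub-linear variation to force the constructed primitive $\clI h$ to vanish identically. By the sewing lemma, there is a continuous path $\clI h : I \to E$ with $\clI h_0 = 0$ such that $\Lambda h := h - \delta \clI h \in C^{\zeta-\textnormal{var}}_{2,\varpi,L}(I;E)$ and satisfies the bound \eqref{ineq:sewing estimate}. The corollary will follow immediately once $\clI h \equiv 0$ is established, because then $h = \Lambda h$ and \eqref{ineq:sewing estimate} reads exactly as the claimed estimate on $h$.

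To show $\clI h$ vanishes, I would first upgrade its regularity using the ``Furthermore'' clause of Lemma~\ref{sewingLemma}. After replacing $p$ by $\max(p,\zeta)$ if needed (which is still strictly less than $1$ since $\zeta \in [0,1)$ and $p<1$), the hypothesis $h \in C^{p-\textnormal{var}}_{2,\varpi,L}(I;E)$ yields $\clI h \in C^{p-\textnormal{var}}(I;E)$ for this same $p < 1$.

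The main step is then a short rigidity argument: any continuous Banach-space path with finite $p$-variation for some $p < 1$ must be constant. Indeed, for any partition $t_0 < \cdots < t_n$ of a subinterval $[0,t] \subset I$, uniform continuity of $\clI h$ allows a refinement with $\max_i |\delta\clI h_{t_i t_{i+1}}| < \varepsilon$, whence
\[
\sum_i |\delta \clI h_{t_i t_{i+1}}| \le \varepsilon^{1-p}\sum_i |\delta \clI h_{t_i t_{i+1}}|^{p} \le \varepsilon^{1-p}\,|\clI h|_{p-\textnormal{var};[0,t]}^{p},
\]
and sending $\varepsilon \to 0$ forces the $1$-variation of $\clI h$ on $[0,t]$ to vanish. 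Combined with $\clI h_0 = 0$ this gives $\clI h \equiv 0$, hence $\delta \clI h = 0$, so that $(\Lambda h)_{st} = h_{st}$; substituting into \eqref{ineq:sewing estimate} then produces the claimed bound. The rigidity step is the only genuinely new ingredient, and I expect it to be the single nontrivial point; everything else is simply unpacking the output of Lemma~\ref{sewingLemma}.
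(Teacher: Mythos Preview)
Your proposal is correct and follows exactly the paper's approach: the paper's proof is the single sentence that $\clI h$ is a path starting at $0$ and hence vanishes because $\clI h\in C^{p-\textnormal{var}}(I;E)$ with $p<1$ (the paper writes ``$p>1$'' there, evidently a typo). You have simply supplied the standard rigidity argument for that fact and taken care of the constraint $p\ge\zeta$ in the ``Furthermore'' clause by passing to $\max(p,\zeta)<1$, which is exactly what is needed.
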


\bibliographystyle{unsrt}
\bibliography{bibliography}
\end{document}